\newtheorem{thm}{Theorem}[section]
\newtheorem{proposition}[thm]{Proposition}
\newtheorem{remark}[thm]{Remark}
\newtheorem{definition}[thm]{Definition}
\newtheorem{corollary}[thm]{Corollary}
\newtheorem{lemma}[thm]{Lemma}
\newcommand{\ed}{\stackrel{\mbox{\tiny $(law)$}}{=}}
\def\qed{\hfill $\square$ \bigskip}
\def\beq{\begin{equation}}               
\def\eeq{\end{equation}}                 
\def\bea{\begin{eqnarray}}             
\def\eea{\end{eqnarray}}               
\def\be*{\begin{eqnarray*}}             
\def\ee*{\end{eqnarray*}}               
\def\ba{\begin{array}}                  
\def\ea{\end{array}}                    
\def\beqlb{\begin{eqnarray}} \def\eeqlb{\end{eqnarray}}
\def\beqnn{\begin{eqnarray*}} \def\eeqnn{\end{eqnarray*}}
\def\<{\langle}  \def\>{\rangle}
\def\bde{\begin{definition}}
\def\ede{\end{definition}}
\def\bth{\begin{thm}}
\def\eth{\end{thm}}
\def\bpr{\begin{proposition}}
\def\epr{\end{proposition}}
\def\ble{\begin{lemma}}
\def\ele{\end{lemma}}
\def\bcor{\begin{corollary}}
\def\ecor{\end{corollary}}
\def\bre{\begin{remark}}
\def\ere{\end{remark}}
\def\p{\mathbb{P}}
\def\ee{\varepsilon}
\def\qed{{\hfill $\Box$ \bigskip}}
\def\qed{{\hfill $\Box$ \bigskip}}
\title[Coding multitype forests: application to the law of the total population]
{Coding multitype forests: application to the law of the total population of branching forests}
\author{Lo\"ic Chaumont}
\address{Lo\"ic Chaumont -- LAREMA -- UMR CNRS 6093, Universit\'e d'Angers, 2 bd Lavoisier, 49045 Angers cedex~01}
\email{loic.chaumont@univ-angers.fr}
\author{Rongli Liu}
\address{Rongli Liu -- Department of Mathematics, Nanjing University, Nanjing, 210093, P.R.China}
\email{rongli.liu@gmail.com}
\keywords{Multitype branching forest, coding, random walks, ballot theorem, total population, cyclic exchangeablity.}
\subjclass[2010]{60C05, 05C05}
\thanks{This work was supported by MODEMAVE research project from the R\'egion
Pays de la Loire.}
\thanks{Ce travail a b\'en\'eci\'e d'une aide de l'Agence Nationale de la Recherche portant la
r\'ef\'erence ANR-09-BLAN-0084-01.}
\date{\today}
\begin{document}
\begin{picture}(0,0)
\put(35,60){\makebox[145truemm][r]{\footnotesize\it To appear in Transactions
 of the American Mathematical Society}}
\end{picture}


\begin{abstract} By extending the breadth first search algorithm to any $d$-type critical or subcritical irreducible branching forest, we show
that such forests can be encoded through $d$ independent, integer valued, $d$-dimensional random walks. An application of this coding together
with a multivariate extension of the Ballot Theorem which is obtained here, allow us to give an explicit form of the law of the total population,
jointly with the number of subtrees of each type, in terms of the offspring distribution of the branching process.
\end{abstract}

\maketitle

\vspace*{.3in}

\section{Introduction}\label{intro}

Let $u_1,u_2\dots$ be the labeling in the breadth first search order of the vertices of a critical or subcritical branching forest with progeny
distribution $\nu$. Call $p(u_i)$, the size of the progeny of the $i$-th vertex, then the stochastic process $(X_n)_{n\ge0}$ defined by,
\[X_{0}=0\;\;\;\mbox{and}\;\;\;X_{n+1}-X_n=p(u_{n+1})-1\,,\;\;\;n\ge0\]
 is a downward skip free random walk with step distribution $P(X_1=n)=\nu(n+1)$, from which the entire structure of the
original branching forest can be recovered. We will refer to this random walk as the
{\it Lukasiewicz-Harris coding path} of the branching forest, see Section 6 of \cite{ha2}, Section 1.1 of \cite{le} or Section 6.2 of \cite{pi}.
A nice example of application of this coding is that the total population of the first $k$ trees ${\bf t}_1,{\bf t}_2,\dots,{\bf t}_k$ of the forest,
see Figure \ref{forest}, may be expressed as the first passage time $T_k$ of $(X_n)_{n\ge0}$ at level $-k$, that is,
\[T_k=\inf\{i:X_i=-k\}\,.\]
This result combined with the following Kemperman's identity (also known as the Ballot Theorem, see \cite{ta}, Lemma 5 in \cite{be} or
Section 6.2 in \cite{pi}):
\[P(T_k=n)=\frac knP(X_n=-k)\,,\]
allows us to compute the law of the total population of ${\bf t}_1,{\bf t}_2,\dots,{\bf t}_k$ in terms of the progeny distribution $\nu$.
Note that the total population is actually a functional of  the associated branching process, $(Z_n,\,n\ge0)$, since the random variable $Z_n$ represents
the number of individuals at the $n$-th generation in the forest. The expression of this law  was first obtained by Otter \cite{ot} and
Dwass \cite{dw}.
\begin{thm}[Otter (49) and Dwass (69)]\label{dwass} Let $Z=(Z_n)$ be a critical or subcritical branching process. Let $\p_k$ be its law when
it starts from $Z_0=k\ge1$ and denote by $\nu$ its progeny law. Let $O$ be the total size of the population generated by $Z$, that is
$O=\sum_{n\ge0}Z_n$. Then for any $n\geq k$,
  \begin{equation}\label{one type prob}
  \p_k(O=n)=\frac{k}{n}\nu^{*n}(n-k)\,,
  \end{equation}
where $\nu^{*n}$ is the $n$-th iteration of the convolution product of the probability $\nu$ by itself.
\end{thm}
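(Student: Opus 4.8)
The plan is to deduce Theorem~\ref{dwass} from the coding discussed in the introduction. Realize the branching process $Z$ started from $Z_0=k$ as a forest ${\bf t}_1,\dots,{\bf t}_k$ of $k$ independent Galton--Watson trees with offspring law $\nu$; then $Z_n$ is the number of vertices at depth $n$ and $O=\sum_{n\ge0}Z_n$ is the total number of vertices of the forest. Let $\xi_1,\xi_2,\dots$ be i.i.d.\ with law $\nu$, used to generate the offspring numbers of the vertices $u_1,u_2,\dots$ listed in breadth first search order (so $p(u_i)=\xi_i$ for $i\le O$), and let $(X_n)_{n\ge0}$ be the associated Lukasiewicz--Harris walk, $X_0=0$, $X_n=\sum_{i=1}^n(\xi_i-1)$. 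This is a downward skip free random walk with $\p(X_1=j)=\nu(j+1)$, $j\ge-1$, and it has nonpositive drift since $\nu$ is critical or subcritical.

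The first step is the identity $O=T_k$, where $T_k=\inf\{i:X_i=-k\}$. I would argue via the breadth first queue: if the queue starts with the $k$ roots and, at step $i$, one pops $u_i$ and pushes its $\xi_i$ children, then after $i$ steps the queue length is exactly $k+X_i$. Hence the exploration is still running at step $i$ precisely when $k+X_{i-1}\ge1$, which forces $X_i>-k$ for all $i<O$; and it terminates at the first time the queue empties, i.e.\ $X_O=-k$. (Equivalently, since every vertex but the $k$ roots is a child, $\sum_{i=1}^{O}\xi_i=O-k$, so $X_O=-k$.) Thus $O=T_k$. One also needs $O<\infty$ almost surely: because the drift is nonpositive and the steps are bounded below by $-1$, the walk is recurrent in the critical case and drifts to $-\infty$ in the subcritical case, so it hits every negative level a.s.

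With $O=T_k$ in hand, Kemperman's identity (the Ballot Theorem recalled in the introduction), applied to the downward skip free walk $(X_n)$, gives $\p_k(O=n)=\p(T_k=n)=\frac kn\,\p(X_n=-k)$ for $n\ge k$. It remains to identify $\p(X_n=-k)$: since $X_n=\sum_{i=1}^n\xi_i-n$ with $\xi_i$ i.i.d.\ of law $\nu$, the event $\{X_n=-k\}$ equals $\{\xi_1+\dots+\xi_n=n-k\}$, whose probability is by definition $\nu^{*n}(n-k)$. Substituting yields $\p_k(O=n)=\frac kn\,\nu^{*n}(n-k)$, which is \eqref{one type prob}.

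The point requiring the most care is the identity $O=T_k$ together with the a.s.\ finiteness of $O$: one must keep track that the i.i.d.\ description $p(u_i)=\xi_i$ is only valid for $i\le O$, while the walk $(X_n)$ --- and hence the stopping time $T_k$ --- is defined from the full i.i.d.\ sequence, and then check that the queue-length reading of $k+X_i$ is exactly what forces the walk to stay strictly above $-k$ on $\{i<O\}$ and to equal $-k$ at $i=O$. The remaining ingredients, Kemperman's identity and the convolution identity for $\p(X_n=-k)$, are respectively quoted and immediate.
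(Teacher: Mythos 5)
Your proof is correct and takes essentially the same route the paper itself indicates for this classical result: code the $k$-tree forest by the Lukasiewicz--Harris walk, identify the total progeny $O$ with the first passage time $T_k=\inf\{i:X_i=-k\}$ (your queue-length argument is the standard and sound justification), and conclude by Kemperman's identity together with $\p(X_n=-k)=\nu^{*n}(n-k)$. The only cosmetic caveat is the degenerate critical case $\nu=\delta_1$, where $O=\infty$ a.s.\ and your finiteness claim fails, but there both sides of \eqref{one type prob} vanish, so the identity at fixed finite $n$ is unaffected.
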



\begin{figure}[hbtp]

\vspace*{-5cm}


\hspace*{-1cm}
\includegraphics[height=380pt,width=540pt]{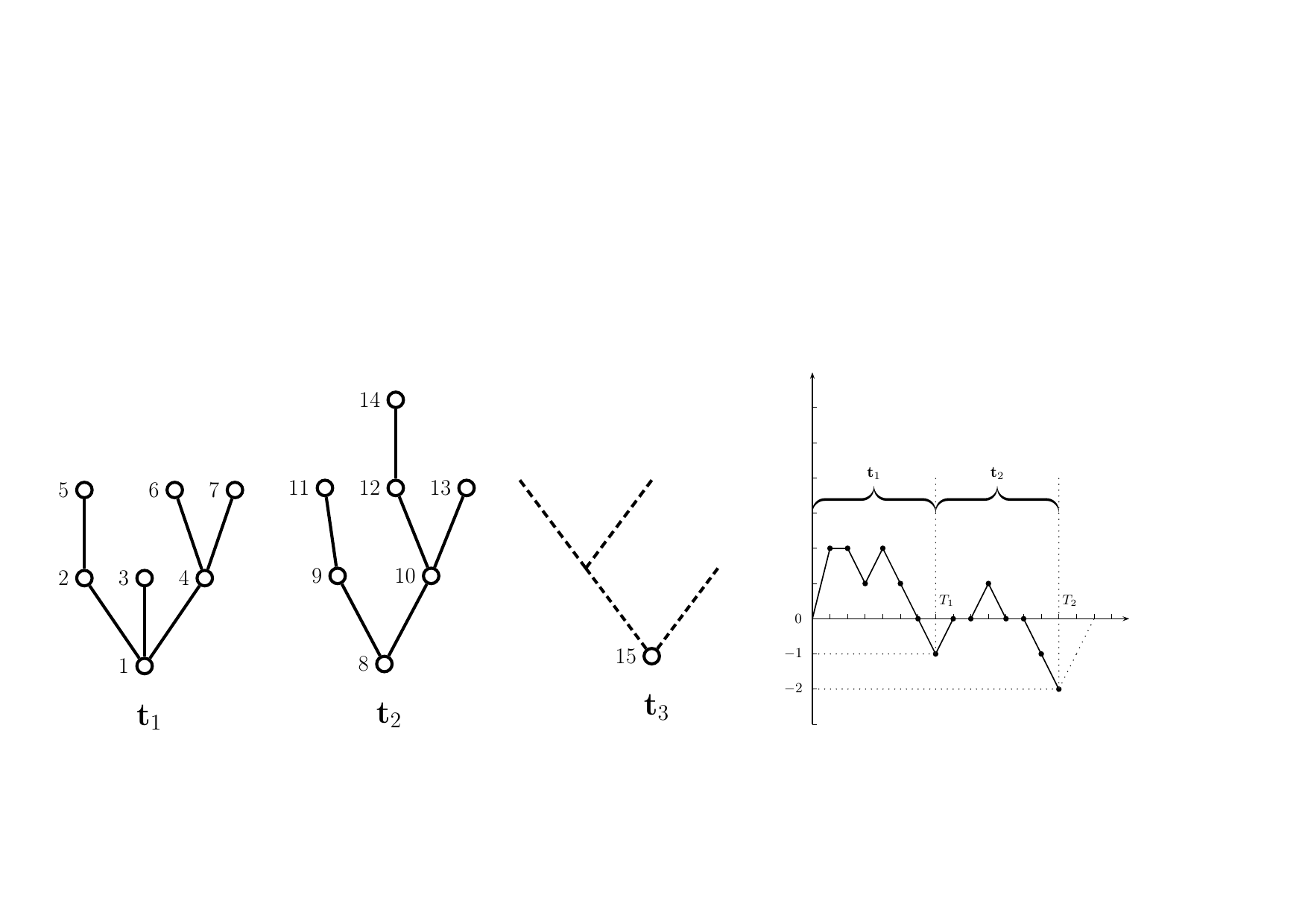}

\vspace{-2.4cm}

\caption{A forest labeled according to the breath first search order and the associated Lukasiewicz-Harris coding path.}\label{forest}
\end{figure}

\noindent More generally, whenever a functional of the branching forest admits a 'nice' expression in terms of the Lukasiewicz-Harris coding
path, we may expect to obtain an explicit form of its law. For instance, the law of the number of individuals with a given degree in the first
$k$ trees can be obtained in this way. We refer to Proposition 1.6 in \cite{ik} where the law of the number of leaves, first obtained in
\cite{min}, is derived from the Lukasiewicz-Harris coding.\\

The goal of this paper is to extend the above program to the multitype case.  The Lukasiewicz-Harris coding will first be extended to multitype forests and will lead to the bijection stated in Theorem \ref{coding} between forests and some set of coding sequences. Then in order to obtain the multitype Otter-Dwass identity which
is stated in Theorem \ref{main}, we first need the equivalent of the Ballot Theorem. This theorem together with its equivalent deterministic
form, the multivariate Cyclic Lemma, are actually amongst the most important results of this paper. Both results require more preliminary notation and will be stated further in the text, see Lemma \ref{crucial} and Theorem \ref{ballot}.\\

Let us first set some definitions and notation in multitype branching processes. We set $\mathbb{Z}_+=\{0,1,2,\dots\}$ and $\mathbb{N}=\{1,2,\dots\}$, and for
any integer $n\ge1$, the set $\{1,\dots,n\}$ will be denoted by $[n]$. In all the sequel of this paper, $d$ will be an integer such that $d\ge 2$. On a probability space
$(\Omega,\mathcal{G},P)$, we define a $d$-type branching process ${\bf Z}:=\{(Z^{(1)}_n,\dots,Z^{(d)}_n),\,n\geq 0\}$, as a $\mathbb{Z}_+^d$ valued Markov
chain with transition probabilities:
\[P({\bf Z}_{n+1}=(k_1,\dots,k_d)\,|\,{\bf Z}_n=(r_1,\dots,r_d))=\nu_1^{*r_1}*\dots*\nu_d^{*r_d}(k_1,\dots,k_d)\,,\]
where $\nu_i$ are distributions on $\mathbb{Z}_+^d$ and $\nu_i^{*r}$ is the $r$-th iteration of the convolution product of $\nu_i$ by itself, with
$\nu_i^{*0}=\delta_0$. For ${\rm r}=(r_1,\dots,r_d)\in\mathbb{Z}_+^d$, we will denote by $\p_{{\rm r}}$
the probability law $P(\,\cdot\,|\,{\bf Z}_0={\rm r})$. The vector $\nu=(\nu_1,\dots,\nu_d)$ will be called the progeny distribution of ${\bf Z}$.
According to this process, each individual of type $i$ gives birth to a random number of children with law $\nu_i$, independently of the other
individuals of its generation. The integer valued random variable $Z_n^{(i)}$ is the total number of individuals of type $i$, at generation $n$.
For $i,j\in [d]$, let us define the rate
\[m_{ij}=\sum_{{\bf z}\in\mathbb Z_+^d}z_j\nu_i({\bf z})\,,\]
that corresponds to the mean number of children of type $j$, given by an individual of type $i$ and let
\[{\bf M}:=(m_{ij})_{i,j\in [d]}\]
be the mean matrix of ${\bf Z}$. Suppose that the extinction time $T$ is a.s.~finite, that is
\begin{equation}\label{3471}
T=\inf\{n:{\bf Z}_n=0\}<\infty\,,\;\;\;\mbox{a.s.}
\end{equation}
Then let $O_i$ be the total number of individuals of type $i$ which are born up to time $T$ (including individuals of the first generation):
\[O_i=\sum_{n=0}^TZ_n^{(i)}=\sum_{n\ge0}Z_n^{(i)}\,.\]
The vector $(O_1,\dots,O_d)$ will be called the {\it total population} of the multitype branching process.\\

Up to now,  most of the results on the exact law of the total population of multitype branching processes concern non irreducible, 2-type branching processes. Let us now
recall them. In the case where $d=2$ and when $m_{12}>0$ and $0<m_{11}\le 1$ but $m_{22}=m_{21}=0$, it may be derived from Theorem~1~$(ii)$ in \cite{be},
that the distribution of the total population of ${\bf Z}$ is given by
\begin{equation}\label{2583}
\p_{(r_1,0)}(O_1=n_1, O_2=n_2)=\frac{r_1}{n_1}\nu_1^{*n_1}(n_1-r_1, n_2)\,,\;\;\;1\leq r_1\leq n_1\,.
\end{equation}
When $m_{12}>0$ and $0< m_{11}, m_{22}\le 1$ but $m_{21}=0$, after some elementary computation, combining the identities in
\eqref{one type prob} and \eqref{2583}, we  obtain that for $n_2\ge1$,
\begin{equation}\label{5215}
\p_{(r_1,0)}(O_1=n_1, O_2=n_2)=\frac{r_1}{n_1n_2}\sum_{j=0}^{n_2}j\nu_1^{*n_1}(n_1-r_1, j)\nu_2^{*n_2}(0, n_2-j).
\end{equation}
Note that (\ref{2583}) and (\ref{5215}) concern only the reducible case, when $d=2$ and $T<\infty$, a.s. As far as we know, those are the
only situations where the law of the total population of multitype branching processes is known explicitly.\\

Recall that if ${\bf M}$ is irreducible, then according to Perron-Frobenius Theorem, it admits a unique
eigenvalue $\rho$ which is simple, positive and with maximal modulus. In this case, we will also say that ${\bf Z}$ is irreducible.
If moreover, ${\bf Z}$ is non-degenarate, that is, if individuals have exactly one offspring with probability different from 1, then
extinction, that is (\ref{3471}), holds
if and only if $\rho\le1$, see \cite{ha1}, \cite{mo} and Chapter V of \cite{an}. If $\rho=1$, we say that ${\bf Z}$ is critical and
if $\rho<1$, we say that ${\bf Z}$ is subcritical. The results of this paper will be concerned by the case where ${\bf Z}$ is
{\it irreducible, non-degenarate, and critical or subcritical} so that (\ref{3471}) holds, that is the multitype branching process
${\bf Z}$ becomes extinct with probability 1. However, let us emphasize that this assumption is only made for simplicity reasons
and that all the proofs can be adapted to the case where the process is supercritical and/or reducible.\\

The next result gives the joint law of the total population together with the total number of individuals of type $j$, whose parent
is of type $i$, $i\neq j$, up to time $T$. Let us denote by $A_{ij}$ this random variable. We emphasize that the variables $A_{ij}$ are
not functionals of the multitype branching process ${\bf Z}$. So, their formal definition and the computation of their law require a more
complete information provided by the forest. Theorem \ref{dwass} and identity \eqref{5215} are extended as follows:

\begin{thm}\label{main}
Assume that the $d$-type branching process ${\bf Z}$ is irreducible, non-degenarate and critical or subcritical. For
$i,j\in[d]$, let $O_i$ be the total number of individuals of type $i$, up to the extinction time $T$ and for $i\neq j$, let $A_{ij}$ be
the total number of individuals of type $j$, whose parent is of type $i$, up to time $T$.

Then for all integers $r_i$, $n_i$, $k_{ij}$, $i,j\in [d]$, such that $r_i\ge0$, $r_1+\dots+r_d\ge1$, $k_{ij}\ge0$,
for $i\neq j$, $-k_{jj}=r_j+\sum_{i\neq j}k_{ij}$, and $n_i\ge -k_{ii}$,
\begin{eqnarray*}
&&\p_{{\rm r}}\Big(O_1=n_1,\dots, O_d=n_d, A_{ij}=k_{ij}, i,j\in [d],i\neq j\Big)\\
&=&\frac{\mbox{\rm det}(K)}{\bar{n}_1\bar{n}_2\dots \bar{n}_d}\prod_{i=1}^d\nu_i^{*n_i}(k_{i1},\dots,k_{i(i-1)},n_i+k_{ii},k_{i(i+1)},\dots,k_{id})\,,
\end{eqnarray*}
where ${\rm r}=(r_1,\dots,r_d)$, $\nu_i^{*0}=\delta_0$, $\bar{n}_i=n_i\vee 1$ and $K$ is the matrix $(-k_{ij})_{i,j\in [d]}$ to which we removed the
line $i$ and the column $i$, for all $i$ such that $n_i=0$.
\end{thm}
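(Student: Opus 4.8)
The strategy is the one sketched in the introduction: transport the question to the coding sequences of Theorem \ref{coding}, recognise the total--progeny event as a first--passage event for the associated $d$-dimensional walks, and read off the combinatorial prefactor from the multivariate Ballot Theorem \ref{ballot}.

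\textbf{Reduction via the coding.} Recall from Theorem \ref{coding} that a $d$-type forest with $r_i$ roots of type $i$ corresponds bijectively to a family of sequences $v^{(1)},\dots,v^{(d)}$, where $v^{(i)}=(v^{(i)}_1,\dots,v^{(i)}_{a_i})\in(\mathbb{Z}_+^d)^{a_i}$ lists, in breadth first search order, the offspring vectors of the $a_i$ vertices of type $i$, subject to the exploration constraint that the walks $S^{(i)}_\ell:=\sum_{m=1}^\ell(v^{(i)}_m-e_i)$ (with $e_i$ the $i$-th unit vector of $\mathbb{Z}^d$) jointly ``close up'' the forest. Under $\p_{{\rm r}}$ the image law gives each admissible family the weight $\prod_{i=1}^d\prod_{\ell=1}^{a_i}\nu_i(v^{(i)}_\ell)$, since distinct vertices reproduce independently and a type-$i$ vertex has offspring distributed as $\nu_i$. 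On the event considered one reads off: $a_i=O_i=n_i$; for $j\neq i$ the $j$-th coordinate of the offspring sum $\sum_\ell v^{(i)}_\ell$ of the type-$i$ vertices is $A_{ij}=k_{ij}$; and its $i$-th coordinate is the number $A_{ii}$ of type-$i$ children of type-$i$ vertices, which by the bookkeeping identity $O_i=r_i+\sum_{j}A_{ji}$ together with the relation $-k_{ii}=r_i+\sum_{j\neq i}k_{ji}$ equals $n_i+k_{ii}$. Writing ${\rm row}_i:=(k_{i1},\dots,k_{i(i-1)},n_i+k_{ii},k_{i(i+1)},\dots,k_{id})$, we thus obtain
\[
\p_{{\rm r}}(O_i=n_i,\ A_{ij}=k_{ij}\ \text{for}\ i\neq j)=\sum\ \prod_{i=1}^d\prod_{\ell=1}^{n_i}\nu_i(v^{(i)}_\ell),
\]
the sum ranging over all families with $|v^{(i)}|=n_i$, coordinate sum $\sum_\ell v^{(i)}_\ell={\rm row}_i$, and satisfying the exploration constraint.

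\textbf{Removing the constraint.} Rotating each $v^{(i)}$ cyclically, independently over $i$, preserves the length, the coordinate sum and the weight $\prod_\ell\nu_i(v^{(i)}_\ell)$. The multivariate Cyclic Lemma \ref{crucial} --- equivalently the Ballot Theorem \ref{ballot} --- asserts that among the $n_1n_2\cdots n_d$ simultaneous cyclic rotations of any family with coordinate sums ${\rm row}_1,\dots,{\rm row}_d$, exactly $\det(K)$ satisfy the exploration constraint, this number depending only on the $k_{ij}$ and not on the family. A standard double counting over these $n_1\cdots n_d$ shifts then converts the constrained sum above into $\det(K)/(n_1\cdots n_d)$ times the unconstrained sum, which factorises by independence as $\prod_{i=1}^d\nu_i^{*n_i}({\rm row}_i)$, giving the announced formula whenever all $n_i\geq1$. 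If some $n_i=0$, the relations force $r_i=0$ and $k_{ij}=k_{ji}=0$ for every $j$ (so ${\rm row}_i=0$ and $\nu_i^{*0}({\rm row}_i)=\delta_0(0)=1$): there is no type-$i$ sequence to rotate, the factor $\bar n_i=1$ replaces $n_i$, and the number of admissible rotations is the minor of $(-k_{ij})$ obtained by deleting the rows and columns of all absent types, that is $\det(K)$ as defined in the statement.

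\textbf{Main obstacle.} All of the above is bookkeeping with the coding of Theorem \ref{coding}; the real work is the statement that the number of exploration--admissible simultaneous cyclic shifts equals $\det(K)$ exactly and is independent of the underlying sequences. This is precisely the multivariate Ballot Theorem / Cyclic Lemma. One expects its proof to go by induction on the number of types, peeling off one type at a time, the successive one--dimensional ``ballot'' counts assembling into a cofactor expansion of $\det(K)$; the $M$-matrix shape of $K$ --- positive diagonal $-k_{ii}=r_i+\sum_{j\neq i}k_{ji}\geq0$ and non-positive off-diagonal entries $-k_{ij}=-A_{ij}$ --- makes $\det(K)\geq0$, so that the right-hand side is a bona fide probability weight. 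Granted Theorem \ref{ballot}, the proof is complete.
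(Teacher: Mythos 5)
Your proposal is correct and follows essentially the same route as the paper: the coding of Theorem \ref{coding} (and Theorem \ref{Lukasiewicz-Harris}) turns the event into a smallest-solution, first-passage event for the coding walks, the Cyclic Lemma \ref{crucial} / Ballot Theorem \ref{ballot} supplies the factor $\det(K)/(n_1\cdots n_d)$ by averaging over the $n_1\cdots n_d$ simultaneous cyclic shifts, and independence of the walks factorises the unconstrained weight into $\prod_{i}\nu_i^{*n_i}$, with the types having $n_i=0$ treated by restriction to the remaining types. One small correction in the degenerate case: if $n_i=0$ the stated relations only force $r_i=0$ and $k_{ji}=0$ for $j\neq i$ (the column entries), not $k_{ij}=0$ (the row entries); when some $k_{ij}>0$ both sides simply vanish (the left because there is no vertex of type $i$, the right because $\nu_i^{*0}=\delta_0$ is evaluated at a nonzero vector), which is exactly how the paper disposes of that subcase.
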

\noindent Our proof of Theorem \ref{main} uses a bijection, displayed in Theorem \ref{coding}, between multitype forests and a particular set
of multidimensional, integer valued sequences. A consequence of this result is that any critical or subcritical irreducible multitype branching
forest is  encoded by $d$ independent, $d$-dimensional random walks, see Theorem \ref{Lukasiewicz-Harris}. Then, in a similar way to the single
type case, the total population, jointly with the number of subtrees of  each type in the forest, is expressed as the first passage time of this
multivariate process in some domain. The extension of the Ballot Theorem obtained in Theorem \ref{ballot} allows us to conclude as in the single
type case. Another analogy with the single type case is that the multivariate Lagrange inversion formula known as the Lagrange-Good formula, see
\cite{go}, can be derived from Theorem \ref{main} by applying this theorem to the generating function of the random vector
$(O_1,\dots,O_d)$.\\

This paper is organized as follows. Section \ref{deterministic} is devoted to deterministic multitype forests. In Subsection \ref{space}, we
present the space of these forests and in Subsection \ref{codingforests}, we define the space of the coding sequences and we obtain the
bijection between this space and the space of multitype forests. This result is stated in Theorem \ref{coding}.
Then in Section \ref{random}, we define the probability space of multitype branching forests, we display their multitype
Lukasiewicz-Harris coding in Theorem \ref{Lukasiewicz-Harris} and we prove its application to the total population that is stated
in Theorem \ref{main}. This result requires a multivariate extension of the Ballot Theorem, see Theorem \ref{ballot}, whose proof bears on
the crucial combinatorial Lemma \ref{crucial}. The latter is proved in Section \ref{annex}.

\section{Multitype forests}\label{deterministic}

\subsection{The space of multitype forests}\label{space}

A {\it plane forest}, is a directed planar graph with no loops ${\bf f}\subset {\bf v}\times {\bf v}$, with a finite or infinite set of vertices
${\bf v} = {\bf v} ({\bf f})$, such that the outer degree of each vertex is equal to 0 or 1 and whose connected components, which are called
the {\it trees}, are finite. A forest consisting of a single connected component is also called a tree. In a tree ${\bf t}$,
the only vertex with outer degree equal to 0 is called the {\it root} of ${\bf t}$. It will be denoted by $r({\bf t})$. The roots of the
connected components of a forest ${\bf f}$ are called the roots of ${\bf f}$. For two vertices $u$ and $v$ of a forest ${\bf f}$,
if $(u,v)$ is a directed edge of ${\bf f}$, then we say that $u$ is a {\it child} of $v$, or that $v$  is the {\it parent} of $u$. The set of plane
forests will be denoted by $\mathcal{F}$. The elements of $\mathcal{F}$ will simply be called forests.\\

We will sometimes have to label the forests, which will be done in the following way. We first give an order to the trees of the forest
${\bf f}$ and denote them by ${\bf t}_1({\bf f}),{\bf t}_2({\bf f}),\dots,{\bf t}_k({\bf f}),\dots$ (we will usually write
${\bf t}_1,{\bf t}_2,\dots,{\bf t}_k,\dots$ if no confusion is possible). Then each tree is labeled according to the {\it breadth first
search algorithm}: we read the tree from its root to its last generation by running along each generation from the left to the right.
This definition should be obvious from the example of Figure \ref{forest}. If a forest ${\bf f}$ contains at least $i$ vertices, then the
$i$-th vertex of ${\bf f}$ is denoted by $u_i({\bf f})$. When no confusion is possible, we will simply denote the $i$-th vertex by $u_i$.\\

Recall that $d$ is an integer such that $d\ge 2$. To each forest ${\bf f}\in\mathcal{F}$, we associate an application
$c_{\bf f}:{\bf v}({\bf f})\rightarrow [d]$ such that in the labeling defined above, if $u_i,u_{i+1},\dots,u_{i+j}\in{\bf v}({\bf f})$ have
the same parent, then $c_{\bf f}(u_i)\le c_{\bf f}(u_{i+1})\le\dots\le c_{\bf f}(u_{i+j})$.
For $v\in{\bf v}({\bf f})$, the integer $c_{\bf f}(v)$ is called the {\it type} (or the {\it color}) of $v$. The couple $({\bf f},c_{\bf f})$
is called a {\it $d$-type forest}. When no confusion is possible, we will simply write ${\bf f}$. The set of $d$-type
forests will be denoted by $\mathcal{F}_d$. We emphasize that although there is an underlying labeling for each forest, $\mathcal{F}$ and
$\mathcal{F}_d$ are sets of {\it unlabeled} forests. A 2-type forest is represented on Figure \ref{twotypes} below.\\

\begin{figure}[hbtp]

\vspace*{-3.5cm}

\hspace*{-0.5cm}{\includegraphics[height=350pt,width=500pt]{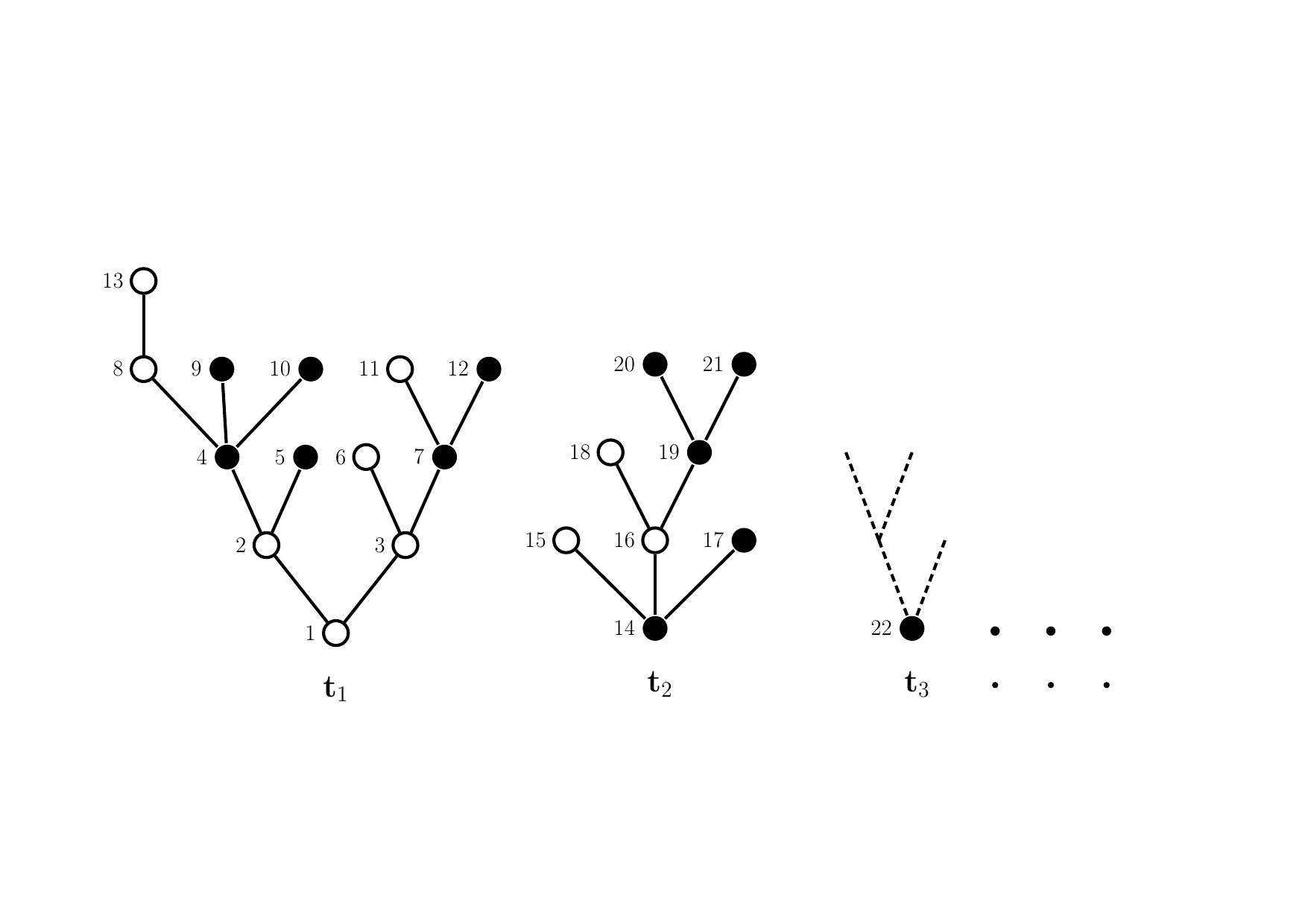}}

\vspace{-3cm}

\caption{A two type forest labeled according to the breath first search order. Vertices of type 1 (resp.~2) are represented in white (resp.~black).}\label{twotypes}
\end{figure}

A {\it subtree of type $i\in[d]$} of a $d$-type forest $({\bf f},c_{\bf f})\in\mathcal{F}_d$ is a maximal connected subgraph of $({\bf f},c_{\bf f})$
whose all vertices are of type $i$. Formally, ${\bf t}$ is a subtree of type $i$ of $({\bf f},c_{\bf f})$, if it is a connected subgraph whose all vertices
are of type $i$ and such that either $r({\bf t})$ has no parent or the type of its parent is different from $i$. Moreover, if the parent of a
vertex $v\in {\bf v}({\bf t})^c$ belongs to ${\bf v}({\bf t})$, then $c_{\bf f}(v)\neq i$. Subtrees of type $i$ of $({\bf f},c_{\bf f})$ are ranked according to the
order of their roots in ${\bf f}$ and are denoted by ${\bf t}^{(i)}_1,{\bf t}^{(i)}_2,\dots,{\bf t}^{(i)}_k,\dots$. The forest
${\bf f}^{(i)}:=\{{\bf t}^{(i)}_1,{\bf t}^{(i)}_2,\dots,{\bf t}^{(i)}_k,\dots\}$ is called {\it the subforest of type $i$} of $({\bf f},c_{\bf f})$. It may be considered
as an element of $\mathcal{F}$. We denote by $u_1^{(i)},u_2^{(i)},\dots$ the elements of ${\bf v}({\bf f}^{(i)})$, ranked
in the breadth first search order of ${\bf f}^{(i)}$. The subforests of type 1 and 2 of a 2-type forest are represented in Figure~\ref{subforests}.\\
\begin{figure}[hbtp]

\vspace*{-2.5cm}

\hspace*{-1cm}{\includegraphics[height=350pt,width=500pt]{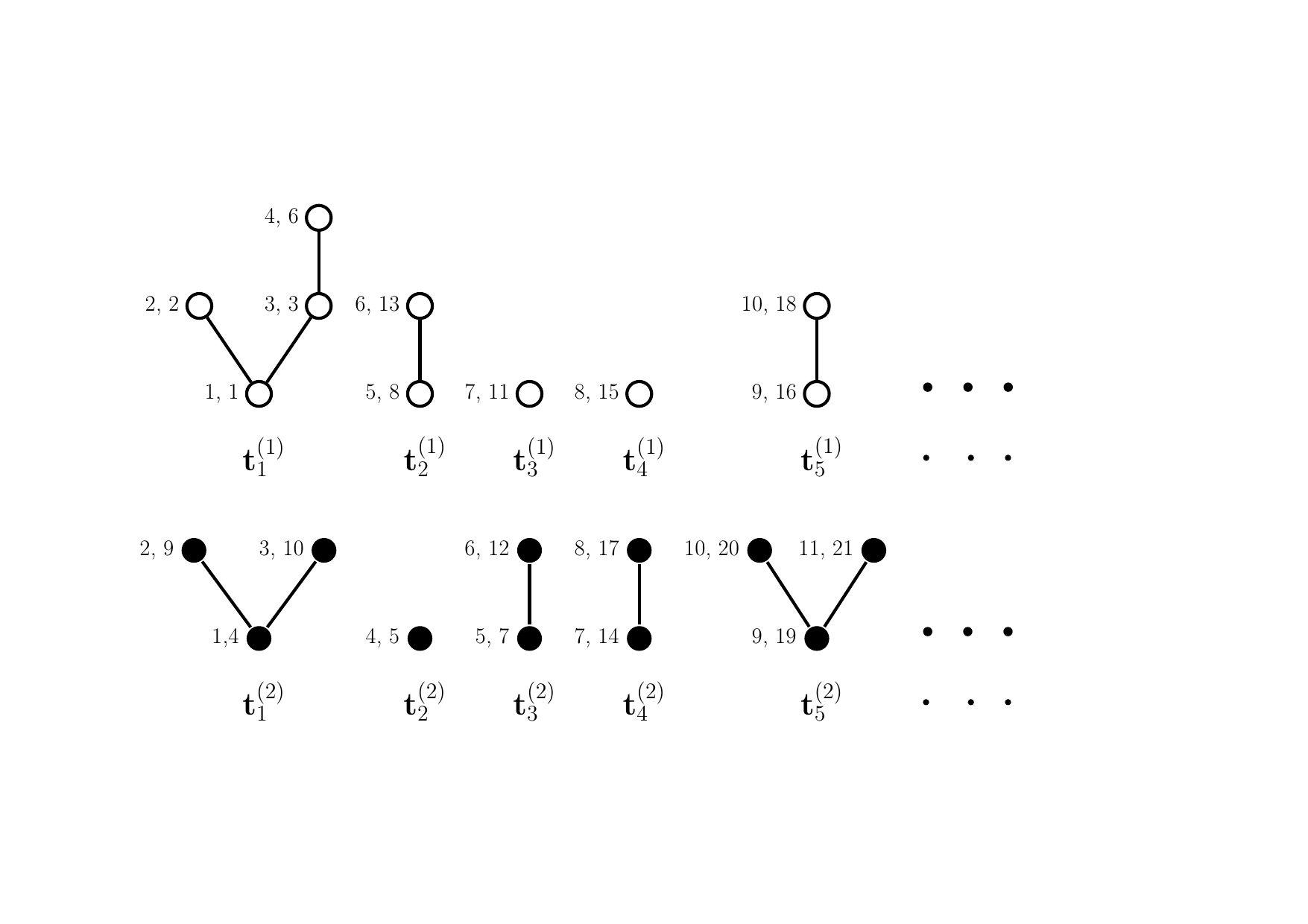}}

\vspace{-2.5cm}

\caption{Labeled subforests ${\bf f}^{(1)}$ and ${\bf f}^{(2)}$ associated with Figure~\ref{twotypes}. Beside each vertex, the first number corresponds to
its rank in ${\bf f}^{(i)}$, $i=1,2$ and the second one is its rank in the original forest.}\label{subforests}
\end{figure}

To any forest $({\bf f},c_{\bf f})\in\mathcal{F}_d$, we associate the {\it reduced forest}, denoted by $({\bf f_r},c_{\bf f_r})\in\mathcal{F}_d$,
which is the forest of $\mathcal{F}_d$ obtained by aggregating all the vertices of each subtree of $({\bf f},c_{\bf f})$ with a given type, in a single
vertex with the same type, and preserving an edge between each pair of connected subtrees.
An example is given in Figure \ref{roots}.

\begin{figure}[hbtp]

\vspace*{-5cm}

\hspace*{-1cm}{\includegraphics[height=350pt,width=500pt]{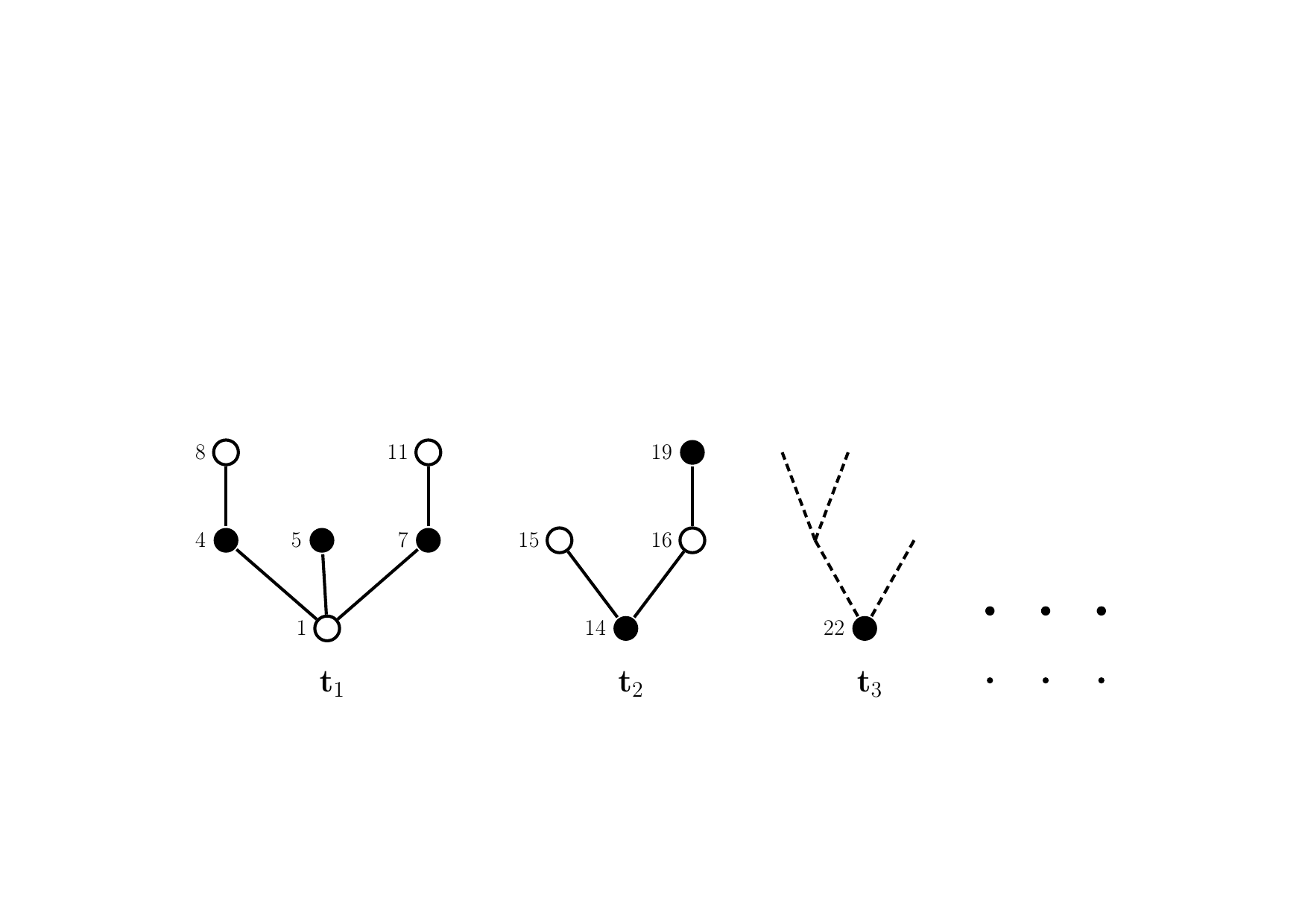}}

\vspace{-3cm}

\caption{Reduced forest associated with the example of Figure \ref{twotypes}. Beside each vertex is the rank of the root of the corresponding
subtree in the original forest.}\label{roots}
\end{figure}

\subsection{Coding multitype forests}\label{codingforests}
For a forest $({\bf f},c_{\bf f})\in\mathcal{F}_d$ and $u\in{\bf v}({\bf f})$, when no confusion is possible,
we denote by $p_i(u)$ the number of children of type $i$ of $u$. For each $i\in [d]$, let $n_i\ge0$ be the number of vertices in the subforest
${\bf f}^{(i)}$ of $({\bf f},c_{\bf f})$. Then let us define the $d$-dimensional chain $x^{(i)}=(x^{i,1},\dots,x^{i,d})$, with length $n_i$ and
whose values belong to the set $\mathbb{Z}^{d}$, by $x_0^{(i)}=0$ and if $n_i\ge1$,
\begin{equation}\label{6245}
x_{n+1}^{i,j}-x_n^{i,j}=p_j(u_{n+1}^{(i)})\,,\;\;\mbox{if $i\neq j\;$ and}\;\;\;x_{n+1}^{i,i}-x_n^{i,i}=
p_i(u_{n+1}^{(i)})-1\,,\;\;\;0\le n\le n_i-1\,,
\end{equation}
where we recall that $(u^{(i)}_n)_{n\ge1}$ is the labeling of the subforest ${\bf f}^{(i)}$
in its own breadth first search order.
Note that the chains $(x_n^{i,j})$, for $i\neq j$ are nondecreasing whereas $(x_n^{i,i})$ is a downward skip free chain, i.e.
$x_{n+1}^{i,i}-x_n^{i,i}\ge -1$, for $0\le n\le n_i-1$. The chain $(x_n^{i,i})$ corresponds to the Lukasiewicz-Harris coding walk of
the subforest ${\bf f}^{(i)}$, as defined in the introduction, see also Section 6.2 in \cite{pi} for a proper definition.
In particular, if $n_i$ is finite, then $n_i=\min\{n:x^{i,i}_{n}=\min_{0\le k\le n_i} x^{i,i}_k\}$. These properties of the chains
$x^{(i)}$ lead us to the following definition.
\begin{definition}\label{9076}
Let $S_d$ be the set of $\left[\mathbb{Z}^{d}\right]^d$-valued sequences, $x=(x^{(1)},x^{(2)},\dots,x^{(d)})$, such that
for all $i\in [d]$, $x^{(i)}=(x^{i,1},\dots,x^{i,d})$ is a $\mathbb{Z}^{d}$-valued sequence defined on some
interval of integers, $\{0,1,2,\ldots, n_i\}$, $0\le n_i\le\infty$, which satisfies $x^{(i)}_0=0$ and if $n_i\ge1$ then
\begin{itemize}
\item[$(i)$]  for $i\neq j$, the sequence $(x_n^{i,j})_{0\le n\le n_i}$ is nondecreasing,
\item[$(ii)$] for all $i$, $x_{n+1}^{i,i}-x_n^{i,i}\ge -1$, $0\le n\le n_i-1$.
\end{itemize}
A sequence $x\in S_d$ will sometimes be denoted by $x=(x^{i,j}_k,\,0\le k\le n_i,\,i,j\in [d])$ and for more convenience, we will sometimes denote $x_k^{i,j}$ by $x^{i,j}(k)$. The vector
${\rm n}=(n_1,\dots,n_d)\in\overline{\mathbb{Z}}_+^d$, where $\overline{\mathbb{Z}}_+=\mathbb{Z}_+\cup\{+\infty\}$ will be called the length of $x$.
\end{definition}
\noindent Relation (\ref{6245}) defines an application from the set $\mathcal{F}_d$ to the set $S_d$. Let us denote by
$\Psi$ this application, that is
\begin{eqnarray}
\Psi:\mathcal{F}_d&\rightarrow&S_d\label{7809}\\
({\bf f},c_{\bf f})&\mapsto&\Psi(({\bf f},c_{\bf f}))=x\,.\nonumber
\end{eqnarray}
For $x\in S_d$, set $k_i=-\inf_{0\le n\le n_i}x^{i,i}_n$ and define the {\it first passage time process} of the chain $(x_n^{i,i})$ as
follows:
\begin{equation}\label{5170}
\tau^{(i)}_k=\min\{n\ge0: x^{i,i}_n=-k\}\,,\;\;0\le k\le k_i\,,
\end{equation}
where $\tau^{(i)}_{k_i}=\infty$, if $k_i=\infty$. If $x$ is the image by $\Psi$ of a forest  $({\bf f},c_{\bf f})\in\mathcal{F}_d$, i.e.
$x=\Psi(({\bf f},c_{\bf f}))$, then $k_i$ is the (finite or infinite) number of trees in the subforest ${\bf f}^{(i)}$ and for $k<\infty$,
the time $\tau^{(i)}_k$ is the total number of vertices which are contained in the first $k$ trees of ${\bf f}^{(i)}$, i.e.
${\bf t}^{(i)}_1,{\bf t}^{(i)}_2,\dots,{\bf t}^{(i)}_k$. This fact is well known and easily follows from the Lukasiewicz-Harris coding
of the single type forest ${\bf f}^{(i)}$, see the introduction and Lemma 6.3 in \cite{pi}. Then for $i,j\in [d]$, define the
integer valued sequence
\begin{equation}\label{5169}\bar{x}^{i,j}_k=x^{i,j}(\tau^{(i)}_k)\,,\;\;\;0\le k\le k_i\,.
\end{equation}
If $x=\Psi(({\bf f},c_{\bf f}))$, then we may check that when $i\neq j$, $\bar{x}^{i,j}_k$ is the number of subtrees of type $j$
whose root is the child of a vertex
in ${\bf t}^{(i)}_1,{\bf t}^{(i)}_2,\dots,{\bf t}^{(i)}_k$. Or equivalently, it is the number of vertices of type $j$ whose
parent is a vertex of ${\bf t}^{(i)}_1,{\bf t}^{(i)}_2,\dots,{\bf t}^{(i)}_k$. For each $i\in [d]$, we set
\[\bar{x}^{(i)}=(\bar{x}^{i,1},\dots,\bar{x}^{i,d})\;\;\;\mbox{and}\;\;\;
\bar{x}=(\bar{x}^{(1)},\bar{x}^{(2)},\dots,\bar{x}^{(d)})\,.\]
Clearly for $i\neq j$, the sequence $(\bar{x}^{i,j}_k)_{0\le k\le k_i}$ is nondecreasing and $\bar{x}^{i,i}_k=-k$, for all $i\in [d]$
and $0\le k\le k_i$. Therefore $\bar{x}\in S_d$ and recalling the definition of the reduced forest, $({\bf f_r},c_{\bf f_r})$, see the end of
 Section \ref{space}, we may check that:
\begin{equation}\label{7345}
\Psi(({\bf f_r},c_{\bf f_r}))=\bar{x}\,.
\end{equation}
For a forest $({\bf f},c_{\bf f})\in\mathcal{F}_d$ with trees ${\bf t}_1,{\bf t}_2,\dots$, we will denote by
${\rm c}_{({\bf f},c_{\bf f})}$ the sequence of types of the roots of ${\bf t}_1,{\bf t}_2,\dots$, i.e.
\[{\rm c}_{({\bf f},c_{\bf f})}:=(c_{{\bf f}}(r({\bf t}_1)),c_{{\bf f}}(r({\bf t}_2)),\dots)\,.\]
Note that ${\rm c}_{({\bf f},c_{\bf f})}\in\cup_{1\le r\le\infty}[d]^r$ and that
${\rm c}_{({\bf f},c_{\bf f})}={\rm c}_{({\bf f_r},c_{\bf f_r})}$.
When no confusion is possible, ${\rm c}_{({\bf f},c_{\bf f})}$ will simply be denoted by ${\rm c}=(c_1,c_2,\dots)$
and we will call it the {\it root type sequence} of the forest.\\

Then before we state the general result on the coding of multitype forests in Theorem \ref{coding}, we first need to
show that the sequences $(\bar{x}^{i,j})_{i\neq j}$ together with ${\bf\rm c}=(c_1,c_2,\dots)$ allow us to
encode the reduced forest $({\bf f_r},c_{\bf f_r})$, i.e. this forest can be reconstructed from $(\bar{x},{\rm c})$. This claim is stated
in Lemma \ref{numbersubtrees} below. In order to prove it, we first need to describe the set of sequences which
encode reduced forests and to state the preliminary Lemma \ref{preliminary} regarding these sequences.\\

Recall that $\overline{\mathbb{Z}}_+=\mathbb{Z}_+\cup\{+\infty\}$ and let us  define the following (non total) order in
$\overline{\mathbb{Z}}_+^d$: for two elements ${\rm q}=(q_1,\dots,q_d)$ and
${\rm q}'=(q_1',\dots,q_d')$ of $\overline{\mathbb{Z}}_+^d$ we write ${\rm q}\le {\rm q'}$ if $q_i\le q'_i$ for all $i\in [d]$.
Moreover we write ${\rm q}< {\rm q'}$ if ${\rm q}\le {\rm q'}$ and if there is $i\in [d]$ such that $q_i<q'_i$.
For an element $x=(x^{i,j}_k,\,0\le k\le q_i,\,i,j\in [d])$ of $S_d$ with length ${\rm q}=(q_1,\dots,q_d)$, and
${\rm r}=(r_1,\dots,r_d)\in\mathbb{Z}_+^d$, we say that the system of equations $({\rm r},x)$ admits a solution if
there exists ${\rm s}=(s_1,\dots,s_d)\in\mathbb{Z}_+^d$, such that ${\rm s}\le {\rm q}$ and
\begin{equation}\label{2418}
r_j+ \sum_{i=1}^d x^{i,j}(s_i)=0\,,\;\;j=1,2,\ldots,d\,.
\end{equation}
We will see in Lemma \ref{numbersubtrees} and Theorem \ref{coding} that for any finite forest $({\bf f},c_{\bf f})\in \mathcal{F}_d$ with $r_i$ roots of type $i$, the length ${\rm q}$ of $x=\Psi(({\bf f},c_{\bf f}))$ is a solution of $({\rm r},x)$ and this solution is the smallest one in a sense that is specified in the following lemma.

\begin{lemma}\label{preliminary} Let $x\in S_d$ and ${\rm r}=(r_1,\dots,r_d)\in\mathbb{Z}_+^d$.
Assume that the system $({\rm r},x)$ admits a solution, then
\begin{itemize}
\item[$(i)$] there exists a unique solution ${\rm n}=(n_1,\dots,n_d)$ of the system $({\rm r},x)$
such that if ${\rm n}'$ is any solution of $({\rm r},x)$, then ${\rm n}\le {\rm n}'$. Moreover we have
$n_i=\min\{n:x^{i,i}_n=\min_{0\le k\le n_i}x^{i,i}_k\}$, for all $i\in [d]$. A solution such as ${\rm n}$
will be called {\em the smallest solution} of the system $({\rm r},x)$.
\item[$(ii)$] Let ${\rm r}'\in\mathbb{Z}_+^d$ be such that ${\rm r}'\le{\rm r}$. Then the system $({\rm r}',x)$ admits a solution. Let us denote its
smallest solution by ${\rm n}'$. Then the system $({\rm r}-{\rm r}',\tilde{x})$, where $\tilde{x}^{i,j}(k)={x}^{i,j}(n_i'+k)-{x}^{i,j}(n_i')$,
$0\le k\le n_i-n_i'$, admits a solution, and its smallest solution is ${\rm n}-{\rm n}'$.
\end{itemize}
\end{lemma}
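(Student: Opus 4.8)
The plan is to exploit the fact that the system $(\mathrm{r},x)$ decouples nicely because of the structure of $S_d$: each diagonal coordinate $x^{i,i}$ is downward skip free while the off-diagonal coordinates $x^{i,j}$ are nondecreasing. First I would reformulate a solution $\mathrm{s}=(s_1,\dots,s_d)$: equation \eqref{2418} with index $j$ reads $x^{j,j}(s_j) = -r_j - \sum_{i\neq j} x^{i,j}(s_i)$. Since $x^{j,j}$ is downward skip free, for any prescribed nonpositive target value $-k$ there is a smallest time $n$ with $x^{j,j}(n) = -k$ precisely when $-k \geq \inf_{0\le m}x^{j,j}_m$, and that time is the first passage time $\tau^{(j)}_k$ of \eqref{5170}; moreover $m\mapsto\inf_{0\le \ell\le m}x^{j,j}_\ell$ is nonincreasing and integer-valued with unit steps. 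This is the only ``analytic'' input needed.

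For part $(i)$, I would argue by a minimality/monotonicity fixed-point argument. Given that \emph{some} solution exists, consider the coordinatewise infimum $\mathrm{n}$ of the (nonempty) set of all solutions; the goal is to show $\mathrm{n}$ is itself a solution and satisfies the stated first-passage description. The key monotonicity observation is: if $\mathrm{s}\le\mathrm{s}'$ are such that $\sum_i x^{i,j}(s_i)\le -r_j$ for all $j$ (i.e. $\mathrm{s}$ ``undershoots'') while $\mathrm{s}'$ is a genuine solution, then for each $j$ one can decrease $s'_j$ down to the first passage time $\tau^{(j)}_{r_j + \sum_{i\neq j}x^{i,j}(s'_i)}$ without destroying the solution property in the other coordinates, because decreasing $s'_j$ only decreases the nondecreasing quantities $x^{j,\ell}(s'_j)$ for $\ell\neq j$, which relaxes the other equations in the undershoot direction. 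Iterating this coordinate-by-coordinate reduction — it terminates since the $s'_i$ are nonincreasing integers bounded below by $0$ — produces a solution that is $\le$ any given one; uniqueness of a smallest solution is then immediate, and the identity $n_i=\min\{n:x^{i,i}_n=\min_{0\le k\le n_i}x^{i,i}_k\}$ follows because at the smallest solution each diagonal coordinate must have reached its running minimum (otherwise one could reduce $n_i$ further, contradicting minimality) and conversely that running minimum is exactly the value forced by \eqref{2418}. I would be careful to phrase the iteration so that it is manifestly well founded (each step strictly decreases $\sum_i s_i$ unless already at the fixed point).

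For part $(ii)$, existence of a solution of $(\mathrm{r}',x)$ when $\mathrm{r}'\le\mathrm{r}$ follows by the same reduction argument applied starting from the solution $\mathrm{n}$ of $(\mathrm{r},x)$: since $-r'_j\ge -r_j = \sum_i x^{i,j}(n_i)$, the point $\mathrm{n}$ is an ``overshoot'' for the $\mathrm{r}'$-system, and downward skip freeness of each $x^{j,j}$ together with the monotone reduction lets us descend to an actual solution $\mathrm{n}'\le\mathrm{n}$. For the shifted system, I would verify directly that $\tilde x\in S_d$ with length $\mathrm{n}-\mathrm{n}'$ (nondecreasingness and the skip-free property are inherited by increments), and then compute: $\mathrm{s}$ solves $(\mathrm{r}-\mathrm{r}',\tilde x)$ iff $r_j - r'_j + \sum_i (x^{i,j}(n'_i+s_i) - x^{i,j}(n'_i)) = 0$, i.e. iff $\sum_i x^{i,j}(n'_i+s_i) = -r_j + r'_j + \sum_i x^{i,j}(n'_i) = -r_j$ (using that $\mathrm{n}'$ solves $(\mathrm{r}',x)$), i.e. iff $\mathrm{n}'+\mathrm{s}$ solves $(\mathrm{r},x)$. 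Hence solutions of $(\mathrm{r}-\mathrm{r}',\tilde x)$ correspond bijectively, via $\mathrm{s}\mapsto\mathrm{n}'+\mathrm{s}$, to solutions of $(\mathrm{r},x)$ that are $\ge\mathrm{n}'$; since $\mathrm{n}$ is the smallest solution of $(\mathrm{r},x)$ and $\mathrm{n}\ge\mathrm{n}'$, the smallest such shifted solution is $\mathrm{n}-\mathrm{n}'$, as claimed.

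The main obstacle I anticipate is making the coordinate-by-coordinate reduction in part $(i)$ rigorous without circularity — one must ensure that reducing one coordinate toward its first passage time does not force another coordinate back up, and that the process of alternating reductions over the $d$ coordinates actually terminates at a common fixed point rather than cycling. The monotone structure (nondecreasing off-diagonal entries, so that shrinking $s_i$ always helps every other equation) is exactly what rules out cycling, but this has to be stated cleanly, perhaps by working with the potential $\sum_i s_i \in\mathbb{Z}_+$ which strictly decreases at every non-trivial reduction step.
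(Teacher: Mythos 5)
Your part $(ii)$ is fine (the shift argument via the order isomorphism ${\rm s}\mapsto{\rm n}'+{\rm s}$ is essentially the paper's contradiction argument, phrased a bit more cleanly, and the downward reduction does prove \emph{existence} of a solution of $({\rm r}',x)$ starting from ${\rm n}$). The genuine gap is in part $(i)$: the coordinate-by-coordinate \emph{downward} reduction from a given solution terminates at a fixed point of the monotone map $T_j({\rm s})=\tau^{(j)}\bigl(r_j+\sum_{i\neq j}x^{i,j}(s_i)\bigr)$, but such fixed points are not unique, so the terminal point is only $\le$ the solution you started from, not $\le$ every solution, and ``uniqueness of a smallest solution is then immediate'' does not follow. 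Concretely, take $d=2$, ${\rm r}=(1,0)$, $x^{1,1}_k=-k$ on $\{0,1,2\}$, $x^{2,2}_k=-k$ on $\{0,1\}$, $x^{1,2}=(0,0,1)$, $x^{2,1}=(0,1)$. The system reads $s_1=1+x^{2,1}(s_2)$, $s_2=x^{1,2}(s_1)$, and both $(1,0)$ and $(2,1)$ are solutions; at $(2,1)$ each coordinate already equals the first passage time of the level prescribed by the other coordinate, so your reduction started at $(2,1)$ is stationary there and never reaches the smallest solution $(1,0)$. For the same reason, your justification of the identity $n_i=\min\{n:x^{i,i}_n=\min_{0\le k\le n_i}x^{i,i}_k\}$ (``otherwise one could reduce $n_i$ further, contradicting minimality'') is not a proof: lowering $n_i$ alone only makes the other equations nonpositive, not zero, so minimality of ${\rm n}$ as a solution is not directly contradicted.

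The issue you anticipated (cycling or coordinates being pushed back up) is indeed not the problem — your invariant $F_j\le0$ and the potential $\sum_i s_i$ handle termination — but minimality requires running the iteration in the \emph{opposite} direction, as the paper does: set $v^{(j)}_1=r_j$ and $v^{(j)}_{n+1}=r_j+\sum_{i\neq j}x^{i,j}\bigl(\tau^{(i)}(v^{(i)}_n)\bigr)$, i.e.\ Kleene iteration of $T$ from $0$. One checks by induction that every solution of $({\rm r},x)$ dominates $\bigl(\tau^{(j)}(v^{(j)}_n)\bigr)_{j\in[d]}$ for each $n$, so these are \emph{universal} lower bounds; the sequence is nondecreasing and bounded above by the assumed solution, hence stabilizes after finitely many steps at a point which is a solution, is $\le$ every solution, and is automatically a vector of first passage times (which yields the stated description of $n_i$ with no extra work). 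With part $(i)$ established this way, your treatment of part $(ii)$, including ${\rm n}'\le{\rm n}$ and the identification of ${\rm n}-{\rm n}'$ as the smallest solution of $({\rm r}-{\rm r}',\tilde x)$, goes through.
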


\noindent A proof of this lemma is given in Section \ref{annex}. For ${\rm r}=(r_1,\dots,r_d)\in\mathbb{Z}_+^d$,
with $r=r_1+\dots+r_d\ge1$, that is ${\rm r}>0$, we define,
\[
C_d^{\rm r}=\Big\{c\in [d]^r:\mbox{Card}\,\{j\in [r]:\ c_j=i\}=r_i,\,i\in [d]\Big\}\,.
\]
We emphasize that the root type sequence of a forest $({\bf f},c_{\bf f})\in\mathcal{F}_d$  with $r=r_1+\dots+r_d$ trees
amongst which exactly $r_i$ trees have a root of type $i$ is an element of $C_d^{\rm r}$. Now we define the subsets of forests and
reduced forests whose root type sequence is in $C_d^{\rm r}$ and that contain at least one vertex of each type.

\begin{definition}\label{4365} Let ${\rm r}=(r_1,\dots,r_d)\in\mathbb{Z}_+^d$, such that ${\rm r}>0$.
\begin{itemize}
\item[$(i)$] We denote by $\mathcal{F}_d^{\rm r}$, the subset of $\mathcal{F}_d$ of forests $({\bf f},c_{\bf f})$ with $r_1+\dots+r_d$ trees,
which contain at least one vertex of each type, and such that ${\rm c}_{({\bf f},c_{\bf f})}\in C_d^{\rm r}$.
\item[$(ii)$]  We denote by $\bar{\mathcal{F}}_d^{\rm r}$ the subset of $\mathcal{F}_d^{\rm r}$, of reduced forests.
More specifically, $({\bf f},c_{\bf f})\in\bar{\mathcal{F}}_d^{\rm r}$ if $({\bf f},c_{\bf f})\in\mathcal{F}_d^{\rm r}$
and if for each $i$, vertices of type $i\in [d]$ in ${\bf v}({\bf f})$ have no child of type $i$.
\end{itemize}
\end{definition}

\noindent Then we define the sets of coding sequences related to $\mathcal{F}_d^{\rm r}$ and $\bar{\mathcal{F}}_d^{\rm r}$.

\begin{definition}\label{4366} Let ${\rm r}=(r_1,\dots,r_d)\in\mathbb{Z}_+^d$ be such that ${\rm r}>0$.
\begin{itemize}
\item[$(i)$] We denote by $S_d^{\rm r}$ the subset of $S_d$ of sequences $x$ whose length belongs to $\mathbb{N}^d$  and corresponds to
the smallest solution of the system $({\rm r},x)$ defined in $(\ref{2418})$.
\item[$(ii)$] We denote by $\bar{S}_d^{\rm r}$ the subset of $S_d^{\rm r}$ consisting in sequences,
such that $x^{i,i}_k=-k$, for all $k$ and $i$.
\end{itemize}
\end{definition}

\noindent Then we first establish a bijection between the sets $\bar{\mathcal{F}}_d^{\rm r}$ and $\bar{S}_d^{\rm r}\times C_d^{\rm r}$.
Recall the definition of $\Psi$ in (\ref{7809}).

\begin{lemma}\label{numbersubtrees} Let  ${\rm r}=(r_1,\dots,r_d)\in\mathbb{Z}_+^d$ be such that ${\rm r}>0$, then
the mapping
\begin{eqnarray*}
\Phi:\bar{\mathcal{F}}_d^{\rm r}&\rightarrow&\bar{S}_d^{\rm r}\times C_d^{\rm r}\\
({\bf f},c_{\bf f})&\mapsto&\left(\Psi(({\bf f},c_{\bf f})),{\rm c}_{({\bf f},c_{\bf f})}\right)
\end{eqnarray*}
is a bijection.
\end{lemma}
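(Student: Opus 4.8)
The plan is to prove that $\Phi$ is well defined, to exhibit an explicit inverse obtained by reconstructing a forest from its coding pair in breadth first order, and to check that the two compositions are identities. The only substantial point is that this reconstruction actually produces an element of $\bar{\mathcal{F}}_d^{\rm r}$ when fed an \emph{arbitrary} pair of $\bar S_d^{\rm r}\times C_d^{\rm r}$, which is exactly where the ``smallest solution'' clause in the definition of $S_d^{\rm r}$ is used.

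\emph{$\Phi$ is well defined.} If $({\bf f},c_{\bf f})\in\bar{\mathcal{F}}_d^{\rm r}$ then every subtree of type $i$ is a single vertex, so ${\bf f}^{(i)}$ is a forest of $n_i$ isolated vertices and, by \eqref{6245}, $x^{i,i}_k=-k$ for $0\le k\le n_i$, where $x=\Psi(({\bf f},c_{\bf f}))$; moreover the root type sequence lies in $C_d^{\rm r}$ by the very definitions. It remains to check that the length ${\rm n}=(n_1,\dots,n_d)$ of $x$ is the smallest solution of the system $({\rm r},x)$ of \eqref{2418}. At ${\rm s}={\rm n}$ the left hand side of the $j$-th equation is $r_j+\sum_{i\neq j}x^{i,j}_{n_i}-n_j$, that is $r_j$ plus the number of type-$j$ vertices having a parent (each counted once, in exactly one $x^{i,j}$ with $i\neq j$, because the forest is reduced) minus $n_j$, hence $0$; so ${\rm n}$ solves the system, and it is the smallest solution by Lemma \ref{preliminary}$(i)$ together with the fact that each $(x^{i,i}_n)_{0\le n\le n_i}$ is strictly decreasing, so that its running minimum is first attained at $n_i$. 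Since $({\bf f},c_{\bf f})$ contains a vertex of each type, ${\rm n}\in\mathbb{N}^d$ and hence $x\in\bar S_d^{\rm r}$.

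\emph{The reconstruction map and $\Theta\circ\Phi=\mathrm{id}$.} Given $(x,{\rm c})\in\bar S_d^{\rm r}\times C_d^{\rm r}$ with ${\rm c}=(c_1,\dots,c_r)$, I build a forest by scanning in breadth first order the vertices to be created: start with $r$ roots of types $c_1,\dots,c_r$, forming the trees ${\bf t}_1,\dots,{\bf t}_r$, and when the scan reaches the vertex which is the $m$-th one of type $i$ to be created, attach to it, for each $j\neq i$ in increasing order of $j$, exactly $x^{i,j}_m-x^{i,j}_{m-1}$ new children of type $j$, and no child of type $i$. For a reduced forest the breadth first order of the subforest ${\bf f}^{(i)}$ is the restriction to the type-$i$ vertices of the breadth first order of the whole forest, so this rule is precisely the one that \eqref{6245} imposes on $\Psi(({\bf f},c_{\bf f}))$; an induction on the breadth first rank then shows that running the procedure on $\Phi(({\bf f},c_{\bf f}))$ recovers $({\bf f},c_{\bf f})$ vertex by vertex, with correct types and parents. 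In particular $\Phi$ is injective; denote by $\Theta$ the reconstruction map.

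\emph{$\Theta$ is valued in $\bar{\mathcal{F}}_d^{\rm r}$ and $\Phi\circ\Theta=\mathrm{id}$ (the main obstacle).} The point to establish is that the procedure above, applied to any $(x,{\rm c})\in\bar S_d^{\rm r}\times C_d^{\rm r}$ of length ${\rm n}$, halts and yields a genuine forest with subforest ${\bf f}^{(i)}$ of size exactly $n_i$ for every $i$. The key is the invariant that if ${\rm M}=(M_1,\dots,M_d)$ is the vector of numbers of vertices of each type already treated by the scan, then the number of type-$j$ vertices created but not yet treated equals $r_j+\sum_{i=1}^d x^{i,j}_{M_i}$, i.e.\ the left hand side of the $j$-th equation of $({\rm r},x)$ evaluated at ${\rm M}$ (a telescoping sum, using $x^{j,j}_{M_j}=-M_j$). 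Being a count, this quantity is nonnegative, so the procedure stops precisely when ${\rm M}$ solves \eqref{2418}; and if ${\rm M}\le{\rm n}$ with $M_i=n_i$ for some $i$, subtracting the $i$-th equation at the solution ${\rm n}$ shows the number of untreated type-$i$ vertices is $\le0$, hence $0$, so the scan never needs an entry of $x^{(i)}$ beyond index $n_i$. An induction on the number of steps then gives ${\rm M}\le{\rm n}$ throughout, that the procedure continues while ${\rm M}\neq{\rm n}$ (no ${\rm M}<{\rm n}$ solves the system, ${\rm n}$ being its \emph{smallest} solution), and hence that it halts after $n_1+\dots+n_d$ steps at ${\rm M}={\rm n}$ with nothing left untreated. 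The resulting forest has $r_1+\dots+r_d$ trees, at least one vertex of each type, and no vertex with a child of its own type, so lies in $\bar{\mathcal{F}}_d^{\rm r}$; and $\Phi$ applied to it returns $(x,{\rm c})$, since by construction the increments of $\Psi$ match those of $x$, the diagonal coordinates are $-k$, and the root type sequence is ${\rm c}$. Together with the previous step, this proves $\Phi$ is a bijection, with inverse $\Theta$.
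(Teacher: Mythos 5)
The gap is in your well-definedness step, and it sits at exactly the point where the paper's proof does its real work. You verify correctly that the length ${\rm n}$ of $x=\Psi(({\bf f},c_{\bf f}))$ is a solution of the system $({\rm r},x)$ of (\ref{2418}), but your justification that it is the \emph{smallest} solution is a non sequitur. Lemma \ref{preliminary}~$(i)$ only asserts that the smallest solution ${\rm m}$ satisfies $m_i=\min\{n:x^{i,i}_n=\min_{0\le k\le m_i}x^{i,i}_k\}$; it does not say that a solution enjoying this first-passage property must be the smallest one. For a reduced forest, where $x^{i,i}_k=-k$ is strictly decreasing, this property is satisfied by \emph{every} index, so the combination ``Lemma \ref{preliminary}~$(i)$ plus strict decrease of the diagonal'' carries no information and cannot rule out a solution ${\rm s}<{\rm n}$. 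What has to be excluded is a purely numerical coincidence: that for some ${\rm s}<{\rm n}$ the first $s_i$ vertices of type $i$ have, in total, exactly $s_j-r_j$ children of type $j$ for every $j$, without the corresponding sets of vertices being closed under taking children. Nothing you cite excludes this (for $d\ge 3$ the parents of ``early'' vertices of one type need not be ``early'' within their own types), and this is precisely the statement to which the paper devotes most of its proof: minimality is proved first for a single tree via the uniqueness of the reconstruction procedure (a reconstruction stopped strictly before the smallest solution still has a marked leaf, hence is incomplete), and then extended to several trees by induction using Lemma \ref{preliminary}~$(ii)$.

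The good news is that your own machinery can close the gap, but you must actually invoke it rather than Lemma \ref{preliminary}~$(i)$. Let ${\rm m}\le{\rm n}$ be the smallest solution of $({\rm r},x)$ (it exists by Lemma \ref{preliminary}~$(i)$), and let $x'$ be the restriction of $x$ to the interval $[0,{\rm m}]$, which lies in $\bar{S}_d^{\rm r}$ because the solutions of $({\rm r},x')$ are exactly the solutions of $({\rm r},x)$ dominated by ${\rm m}$. Run your scan simultaneously on $(x',{\rm c})$ and on $(x,{\rm c})$: the two scans attach the same children at every step as long as the first one has not halted, since its counter stays below ${\rm m}$ and the increments of $x$ and $x'$ agree there. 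When the $(x',{\rm c})$-scan halts, all its created-but-untreated counts vanish at ${\rm M}={\rm m}$, hence so do those of the $(x,{\rm c})$-scan; but by your second paragraph the $(x,{\rm c})$-scan is the breadth first search of $({\bf f},c_{\bf f})$, and if ${\rm m}<{\rm n}$ the forest still contains untreated vertices, each of which has a highest untreated ancestor that is created but untreated --- a contradiction. Hence ${\rm m}={\rm n}$. With this argument (or the paper's tree-by-tree induction) inserted, the rest of your proof --- injectivity via the reconstruction, and the counting invariant showing that $\Theta$ lands in $\bar{\mathcal{F}}_d^{\rm r}$ and inverts $\Phi$ --- is sound and close in spirit to the paper's, merely organized around an explicit invariant instead of Lemma \ref{preliminary}~$(ii)$.
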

\begin{proof} Let $({\bf f},c_{\bf f})\in\bar{\mathcal{F}}_d^{\rm r}$ and let $k_i$ be the total number of subtrees of type $i$ which
are contained in $({\bf f},c_{\bf f})$. (Note that since $({\bf f},c_{\bf f})$ is a reduced forest, its subtrees are actually single vertices.)
By definition, $k_i\ge1$, for each $i$. The fact that, ${\rm c}_{({\bf f},c_{\bf f})}\in C_d^{\rm r}$
follows from Definition \ref{4365} $(ii)$. Then let us show that $x=\Psi(({\bf f},c_{\bf f}))\in\bar{S}_d^{\rm r}$.
Since $({\bf f},c_{\bf f})$ is a reduced forest, then $x=\bar{x}$. Besides, from (\ref{5169}), $x$ has length
${\rm k}=(k_1,\dots,k_d)$ and for $j\neq i$, $x^{i,j}(k_i)$ is the number of subtrees of type $j$ whose root is a child of
a vertex of $\left\{{\bf t}^{(i)}_1,{\bf t}^{(i)}_2,\dots,{\bf t}^{(i)}_{k_i}\right\}$, i.e. of any subtree of type $i$ in
$({\bf f},c_{\bf f})$. Hence for $j\in [d]$, $\sum_{i\neq j}x^{i,j}(k_i)$ is the total number of subtrees of type $j$ in
$({\bf f},c_{\bf f})$, whose root is a child of a vertex of type $i\in [d]$, $i\neq j$.
Then in order to obtain the total number of subtrees of type $j$, it remains to add to $\sum_{i\neq j}x^{i,j}(k_i)$, the number of subtrees of type $j$ whose root is one of the roots of ${\bf t}_1,\dots,{\bf t}_r$, where $r=r_1+\dots+r_d$. The latter number is $r_j$, so that $k_j=r_j+\sum_{i\neq j}x^{i,j}(k_i)$. Since moreover from (\ref{5169}),  $x^{i,i}_{k}=-k$, for all $0\le k \le k_i$, we have proved that
${\rm k}$ is a solution of the system $({\rm r},x)$. It remains to prove that it is the smallest solution.

Let us first assume that ${\rm r}=(1,0,\dots,0)$, so that $({\bf f},c_{\bf f})$ consists in a single tree ${\bf t}_1$ whose root
has color $c_{{\bf f}}(r({\bf t}_1))=1$. Then we can reconstruct, this tree from the $d$ sequences $(x^{(i)}_k,\,0\le k\le k_i)$, $i\in [d]$
by inverting the procedure defined in (\ref{6245}) and this {\it reconstruction procedure} gives a unique tree. Indeed,
by definition of the application $\Psi$, each sequence $(x^{(i)}_k,\,0\le k\le k_i)$, is associated to a unique 'marked subforest',
say $\tilde{{\bf f}}^{(i)}$, of type $i$ whose vertices kept the memory of their progeny. More specifically, for $k\in [k_i]$, the
increment $x^{(i)}_k-x^{(i)}_{k-1}$ gives the progeny of the $k$-th vertex of the subforest ${\bf f}^{(i)}$.
This connection between marked subforests $\tilde{{\bf f}}^{(i)}$ and sequences $(x^{(i)}_k,\,0\le k\le k_i)$
is illustrated on Figure \ref{reconstruction}.

\begin{figure}[hbtp]



\hspace*{-1cm}
\includegraphics[height=430pt,width=280pt]{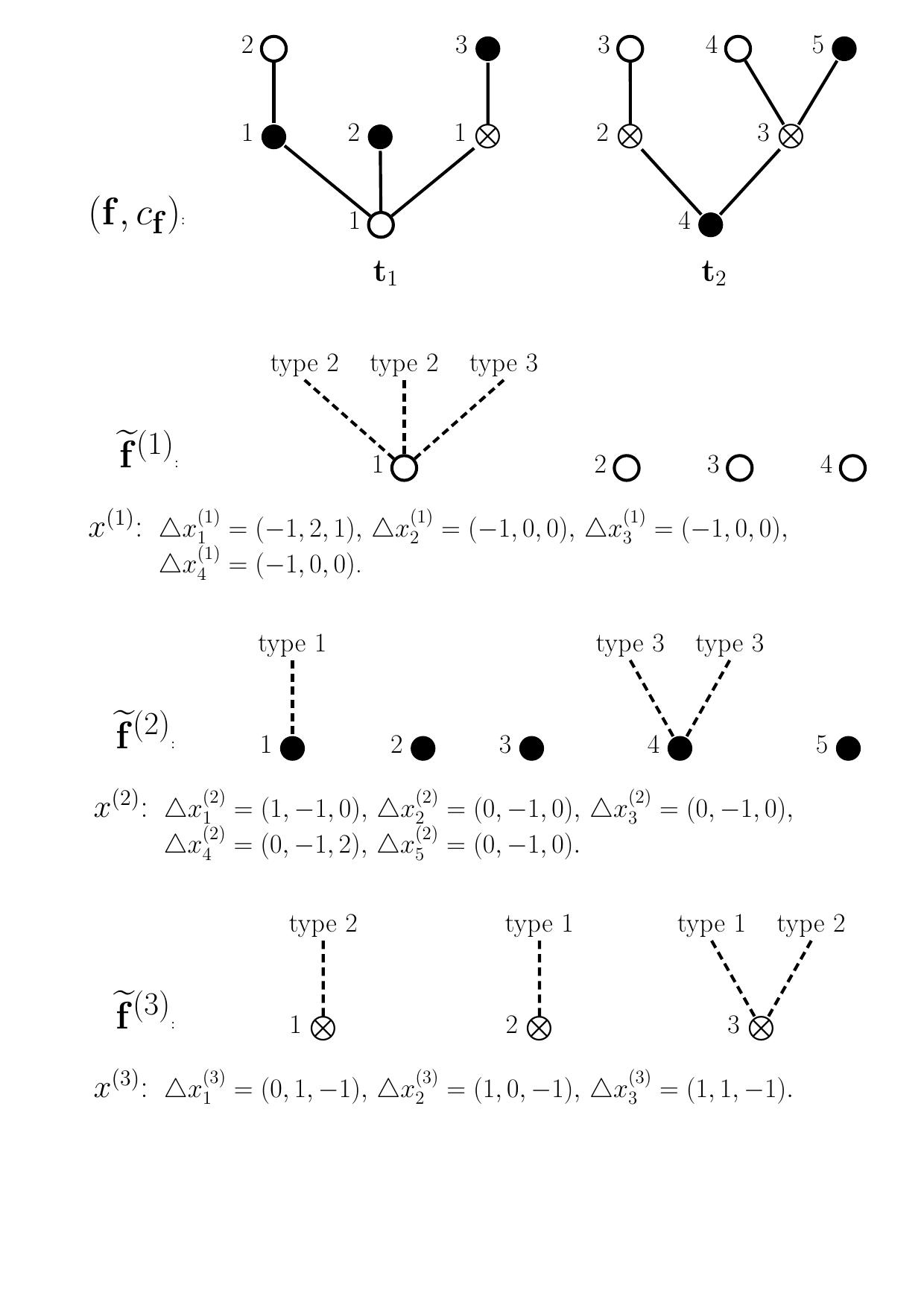}

\vspace{-2cm}

\caption{A three types reduced forest $({\bf f},c_{\bf f})$, the three marked subforests $\tilde{{\bf f}}^{(1)}$, $\tilde{{\bf f}}^{(2)}$, $\tilde{{\bf f}}^{(3)}$ of $({\bf f},c_{\bf f})$ and the coding sequences $x^{(1)}$, $x^{(2)}$, $x^{(3)}$. Here we have set
$\Delta x_k^{(i)}=x_k^{(i)}-x_{k-1}^{(i)}$.}
\label{reconstruction}
\end{figure}

Now let ${\rm k}'\le {\rm k}$ be the smallest solution of the system $({\rm r},x)$.
Let ${\rm q}=(q_1,\dots,q_d)<{\rm k}'$ and suppose that we have been able to perform the reconstruction procedure until ${\rm q}$,
that is from the sequences $(x_k^{(i)},0\le k\le q_i)$, $i\in [d]$. Then since ${\rm q}$ is not a solution of $({\rm r},x)$,
we see from what has been proved just above that the tree that is obtained is 'not complete'. That is, at least one of its leaves (say of type $j$) is marked,
so that this leaf should still get children whose types and numbers are given by the next jump $x_{q_j+1}^{(j)}-x_{q_j}^{(j)}$, for $q_j<k_j'$,
according to the reconstruction procedure. Thus, doing so, we necessarily end up with a tree from the sequences $(x_k^{(i)},0\le k\le k_i')$,
$i\in [d]$, and this tree is complete, that is none of its leaves is marked. Then since the reconstruction procedure obtained by inverting
(\ref{6245}), gives a unique  tree, we necessarily have ${\rm k}'={\rm k}$.

Then let ${\rm r}=(r_1,\dots,r_d)\in\mathbb{Z}_+^d$. Assume with no loss of generality that the root of the
first tree ${\bf t}_1$ of $({\bf f},c_{\bf f})$ has color 1. Let $k_i^1$ be the number of subtrees of type $i$ in  ${\bf t}_1$.
From Lemma \ref{preliminary}, the system $({\rm r}^1,x)$, where ${\rm r}^1:=(1,0,\dots,0)$, admits a smallest solution. Moreover from
the reconstruction procedure which is described above,  this solution is ${\rm k}^1=(k_1^1,\dots,k_d^1)$. Suppose now with no loss of
generality that the second tree,  ${\bf t}_2$ in $({\bf f},c_{\bf f})$ has color 2. Let $k_i^2$ be the number of subtrees of type $i$
in  ${\bf t}_2$. Then from the same arguments as for the reconstruction of the first tree,
${\bf t}_2$ may be reconstructed from the system $({\rm r}^2,y)$, where ${\rm r}^2:=(0,1,0,\dots,0)$
and $y^{i,j}(k)={x}^{i,j}(k_i^1+k)-{x}^{i,j}(k_i^1)$, $k\ge0$. Moreover $({\rm r}^2,y)$ admits
${\rm k}^2=(k_1^2,\dots,k_d^2)$ as a smallest solution. Then from part $(ii)$ of Lemma \ref{preliminary}, ${\rm k}^1+{\rm k}^2$
is the smallest solution of the system $({\rm r}^1+{\rm r}^2,x)$. So we have proved the result for the forest consisting of
the trees ${\bf t}_1$ and ${\bf t}_2$. Then by iterating these arguments for each tree of $({\bf f},c_{\bf f})$, we obtain
that $x\in\bar{S}_d^{\rm r}$.\\

Conversely, let  ${\rm c}=(c_1,\dots,c_r)\in C_d^{\rm r}$, $x\in\bar{S}_d^{\rm r}$ and let ${\rm k}=(k_1,\dots,k_d)$
be the smallest solution of the system $({\rm r},x)$.
Then let us show that there is a forest $({\bf f},c_{\bf f})\in\bar{\mathcal{F}}_d^{\rm r}$ such that
$\Psi(({\bf f},c_{\bf f}))=x$ and ${\rm c}_{({\bf f},c_{\bf f})}={\rm c}$.
Assume, without loss of generality that $c_1=1$. From Lemma \ref{preliminary} $(ii)$, there is a smallest solution, say
${\rm k}^1=(k_1^1,\dots,k_d^1)$, to the  system $({\rm r}^1,x)$, where ${\rm r}^1:=(1,0,\dots,0)$.
Then we may reconstruct a unique forest  $({\bf t}_1,c_{{\bf t}_1})\in\bar{\mathcal{F}}_d^1$ (consisting in a single tree) such that
$\Psi(({\bf t}_1,c_{{\bf t}_1}))=(x^{i,j}_k,0\le k\le k^1_i,\,i,j\in [d])$ and ${\rm c}_{({\bf t}_1,c_{{\bf t}_1})}=1$
by inverting the procedure that is described in (\ref{6245}). Assume for instance that $c_2=2$ and set ${\rm r}^2:=(1,1,0,\dots,0)\le {\rm r}$,
then from Lemma \ref{preliminary} $(ii)$, there is a smallest solution, say ${\rm k}^2$, to the system $({\rm r}^2,x)$.
Moreover, ${\rm k}^3={\rm k}^2-{\rm k}^1$ is the smallest solution of the system $({\rm r}^2-{\rm r}^1,y)$, where
$y^{i,j}(k)={x}^{i,j}(k_i^1+k)-{x}^{i,j}(k_i^1)$, $k\ge0$. Then as before, we can reconstruct a unique tree $({\bf t}_2,c_{{\bf t}_2})$ such
that $\Psi(({\bf t}_2,c_{{\bf t}_2}))=y$ and such that the forest $\hat{{\bf f}}=\{{\bf t}_1,{\bf t}_2\}$ satisfies
$\Psi((\hat{{\bf f}},c_{\hat{{\bf f}}}))=(x^{i,j}_k,0\le k\le k^2_i,\,i,j\in [d])$
and ${\rm c}_{(\hat{{\bf f}},c_{\hat{{\bf f}}})}=(1,1,0,\dots,0)$. Then iterating these arguments, we may reconstruct a unique forest
$({\bf f},c_{\bf f})\in\mathcal{F}_d^{\rm r}$ such that $\Psi(({\bf f},c_{\bf f}))=x$ and ${\rm c}_{({\bf f},c_{\bf f})}=c$.$\;\;\Box$\\
\end{proof}

\noindent Let $x\in S_d$ with length ${\rm n}=(n_1,\dots,n_d)$ and recall from (\ref{5169}), the definition of the
associated sequence $\bar{x}$, with length ${\rm k}=(k_1,\dots,k_d)$, such that $k_i=-\min_{0\le n\le n_i} x_n^{i,i}$.

\begin{lemma}\label{3379} Let ${\rm r}\in\mathbb{Z}^d_+$, such that ${\rm r}>0$ and $x\in S_d$, with length ${\rm n}\in\mathbb{N}^d$
and set $k_i=-\min_{0\le n\le n_i} x_n^{i,i}$, $i\in [d]$. If ${\rm n}$ is the smallest solution of the system $({\rm r},x)$
$($i.e. $x\in S_d^{\rm r})$, then ${\rm k}=(k_1,\dots,k_d)$ is the smallest solution of the system $({\rm r},\bar{x})$. Conversely,
if $n_i=\tau^{(i)}_{k_i}$, for all $i\in [d]$ and if ${\rm k}$ is the smallest solution of $({\rm r},\bar{x})$,
$($i.e. $\bar{x}\in \bar{S}_d^{\rm r})$, then  ${\rm n}$ is the smallest solution of $({\rm r},x)$.
\end{lemma}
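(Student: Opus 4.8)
The plan is to deduce both implications from the characterization of the smallest solution given in Lemma~\ref{preliminary}$(i)$, the only bridge between $x$ and $\bar{x}$ being the defining identity $\bar{x}^{i,j}(k_i)=x^{i,j}(\tau^{(i)}_{k_i})$ from \eqref{5169}. I will use throughout that $-k_i=\min_{0\le n\le n_i}x^{i,i}_n$, that $\tau^{(i)}_{k_i}$ is by \eqref{5170} the first time the downward skip free walk $x^{i,i}$ reaches this minimum, and that $\bar{x}\in S_d$ has length $(k_1,\dots,k_d)$ with $\bar{x}^{i,i}_m=-m$ for $0\le m\le k_i$ (all recorded just before \eqref{7345}).

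For the direct implication I would start from the assumption that ${\rm n}$ is the smallest solution of $({\rm r},x)$. Lemma~\ref{preliminary}$(i)$ then gives $n_i=\tau^{(i)}_{k_i}$ for each $i$, since its characterization $n_i=\min\{n:x^{i,i}_n=\min_{0\le k\le n_i}x^{i,i}_k\}$ is exactly the first passage time at level $-k_i$; note in passing that, because ${\rm n}\in\mathbb{N}^d$ and $\tau^{(i)}_0=0$, this forces $k_i\ge1$, so $\bar{x}$ has length in $\mathbb{N}^d$. Substituting $\bar{x}^{i,j}(k_i)=x^{i,j}(\tau^{(i)}_{k_i})=x^{i,j}(n_i)$ into the system $({\rm r},\bar{x})$ shows at once that ${\rm k}$ is a solution of it; and since $\bar{x}^{i,i}_m=-m$, applying the characterization of Lemma~\ref{preliminary}$(i)$ to $\bar{x}$ identifies the smallest solution of $({\rm r},\bar{x})$ as the componentwise first time $\bar{x}^{i,i}$ reaches its minimum, which is precisely $k_i$. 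Hence ${\rm k}$ is the smallest solution, and moreover $\bar{x}\in\bar{S}_d^{\rm r}$.

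For the converse I would assume $n_i=\tau^{(i)}_{k_i}$ for all $i$ and that ${\rm k}$ is the smallest solution of $({\rm r},\bar{x})$. The same substitution $x^{i,j}(n_i)=x^{i,j}(\tau^{(i)}_{k_i})=\bar{x}^{i,j}(k_i)$ immediately shows that ${\rm n}$ is \emph{a} solution of $({\rm r},x)$, so by Lemma~\ref{preliminary}$(i)$ there is a smallest solution ${\rm m}\le{\rm n}$, with $m_i$ equal to the first time $x^{i,i}$ attains its minimum over $\{0,\dots,m_i\}$, a minimum I denote by $-\ell_i$; here $\ell_i\le k_i$ because $\{0,\dots,m_i\}\subseteq\{0,\dots,n_i\}$, and $m_i=\tau^{(i)}_{\ell_i}$. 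Reading \eqref{5169} once more, $\bar{x}^{i,j}(\ell_i)=x^{i,j}(\tau^{(i)}_{\ell_i})=x^{i,j}(m_i)$, so that $(\ell_1,\dots,\ell_d)\le{\rm k}$ turns out to be a solution of $({\rm r},\bar{x})$; minimality of ${\rm k}$ then yields $(\ell_1,\dots,\ell_d)={\rm k}$, whence $m_i=\tau^{(i)}_{\ell_i}=\tau^{(i)}_{k_i}=n_i$ for all $i$, i.e. ${\rm m}={\rm n}$.

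No real computation is involved; the only point that needs care is the mildly self-referential form of the characterization in Lemma~\ref{preliminary}$(i)$ — the index $m_i$ appears on both sides of $m_i=\min\{n:x^{i,i}_n=\min_{0\le k\le m_i}x^{i,i}_k\}$ — which should be read as asserting that $m_i$ is the first passage time of the walk $x^{i,i}$ at the level equal to its overall minimum on the range of the smallest solution. Once that is kept straight, the whole lemma is just a matter of matching first passage times with record levels.
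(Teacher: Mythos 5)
Your converse direction is correct and is essentially the paper's own argument, but the direct implication has a genuine gap at the step where you claim that the characterization in Lemma~\ref{preliminary}$(i)$ ``identifies the smallest solution of $({\rm r},\bar{x})$ as $k_i$''. For the reduced sequence one has $\bar{x}^{i,i}_m=-m$, which is strictly decreasing, so for \emph{any} candidate vector ${\rm m}$ the self-referential identity $m_i=\min\{n:\bar{x}^{i,i}_n=\min_{0\le k\le m_i}\bar{x}^{i,i}_k\}$ holds trivially (the minimum over $\{0,\dots,m_i\}$ is $-m_i$, first attained at $m_i$). The characterization is therefore a necessary property of the smallest solution that carries no information at all when applied to $\bar{x}$: it cannot rule out a solution ${\rm k}'<{\rm k}$ of $({\rm r},\bar{x})$. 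What you have actually proved at that point is only that ${\rm k}$ is \emph{a} solution.

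The missing step is exactly the trick you use in your converse (and it is how the paper argues): let ${\rm k}'\le{\rm k}$ be any solution of $({\rm r},\bar{x})$ and set $n_i'=\tau^{(i)}_{k_i'}\le\tau^{(i)}_{k_i}=n_i$; then by \eqref{5169}, $x^{i,j}(n_i')=\bar{x}^{i,j}(k_i')$, so ${\rm n}'$ is a solution of $({\rm r},x)$ with ${\rm n}'\le{\rm n}$, and minimality of ${\rm n}$ forces ${\rm n}'={\rm n}$; since $k\mapsto\tau^{(i)}_k$ is strictly increasing, this gives ${\rm k}'={\rm k}$, i.e.\ ${\rm k}$ is the smallest solution. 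With this substitution for your appeal to the characterization, the proof is complete and coincides with the paper's.
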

\begin{proof} Assume that ${\rm n}$ is the smallest solution of the system $({\rm r},x)$.  Then from part $(i)$ of Lemma \ref{preliminary},
$n_i=\tau^{(i)}_{k_i}$, hence ${\rm k}$ is a solution of $({\rm r},\bar{x})$. Let ${\rm k}'\le{\rm k}$
be such that
\[
r_j+ \sum_{i=1}^d \bar{x}^{i,j}(k_i')=0\,,\;\;j=1,2,\ldots d\,.
\]
Then by definition of $\bar{x}$ there is ${\rm n}'\le {\rm n}$ such that $n_i'=\tau^{(i)}_{k_i'}$ and
\[
r_j+ \sum_{i=1}^d x^{i,j}(n_i')=0\,,\;\;j=1,2,\ldots d\,.
\]
So ${\rm n}'={\rm n}$ and hence  ${\rm k}'={\rm k}$.

The converse is proved in the same way. Suppose that $n_i=\tau^{(i)}_{k_i}$, $i\in [d]$, and that ${\rm k}$ is the smallest solution of
$({\rm r},\bar{x})$. Then clearly, ${\rm n}$ is a solution of $({\rm r},x)$. Let ${\rm n}'$ be the smallest solution of
$({\rm r},x)$. Then  from Lemma \ref{preliminary}, there is ${\rm k}'$ such that $n_i'=\tau^{(i)}_{k_i'}$, hence ${\rm k}'$ is a solution of
$({\rm r},\bar{x})$. This implies that ${\rm k}\le {\rm k}'$, so that ${\rm n}\le {\rm n}'$, hence ${\rm n}={\rm n}'$. $\;\;\Box$\\
\end{proof}

\noindent Now we extend the application $\Phi$ defined in Lemma \ref{numbersubtrees} to the set $\mathcal{F}_d^{\rm r}$.
Here is the main result of this section, that can be considered as an extension of Proposition 1.1 in \cite{le}.

\begin{thm}\label{coding}
Let  ${\rm r}=(r_1,\dots,r_d)\in\mathbb{Z}_+^d$, be such that ${\rm r}>0$, then the mapping
\begin{eqnarray*}
\Phi:\mathcal{F}_d^{\rm r}&\rightarrow&S_d^{\rm r}\times C_d^{\rm r}\\
({\bf f},c_{\bf f})&\mapsto&\left(\Psi(({\bf f},c_{\bf f})),{\rm c}_{({\bf f},c_{\bf f})}\right)
\end{eqnarray*}
is a bijection.
\end{thm}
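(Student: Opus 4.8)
The plan is to reduce Theorem \ref{coding} to the already-established bijection of Lemma \ref{numbersubtrees} by "collapsing" a forest to its reduced forest and then "re-expanding" it, all controlled by the decomposition $x \mapsto \bar x$ together with Lemma \ref{3379}. First I would check that $\Phi$ is well defined, i.e. that $\Psi(({\bf f},c_{\bf f})) \in S_d^{\rm r}$ whenever $({\bf f},c_{\bf f}) \in \mathcal{F}_d^{\rm r}$. This means showing the length ${\rm n}=(n_1,\dots,n_d)$ of $x=\Psi(({\bf f},c_{\bf f}))$ lies in $\mathbb{N}^d$ (immediate, since the forest has at least one vertex of each type, so each subforest ${\bf f}^{(i)}$ is nonempty and finite) and that ${\rm n}$ is the smallest solution of $({\rm r},x)$. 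The latter follows exactly as in the proof of Lemma \ref{numbersubtrees}: one reconstructs the trees ${\bf t}_1, {\bf t}_2, \dots$ one at a time by inverting the recursion (\ref{6245}), using that the color sequence ${\rm c}=(c_1,c_2,\dots) \in C_d^{\rm r}$ tells us which system ${\rm r}^m := {\rm r}^{m-1} + e_{c_m}$ to solve at step $m$, and invoking Lemma \ref{preliminary} $(ii)$ to get additivity of the smallest solutions $\sum_{m} {\rm k}^m = {\rm n}$. The only new feature compared to Lemma \ref{numbersubtrees} is that subtrees of type $i$ are no longer single vertices; but the "marked subforest" reconstruction of (\ref{6245}) builds each such subtree from the increments of $x^{(i)}$ exactly as before, so the argument goes through verbatim. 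By the same token ${\rm c}_{({\bf f},c_{\bf f})} \in C_d^{\rm r}$, so the target space is correct.

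Next I would construct the inverse. Given $(x,{\rm c}) \in S_d^{\rm r} \times C_d^{\rm r}$, let ${\rm n}$ be the length of $x$ and ${\rm k}=(k_1,\dots,k_d)$ with $k_i = -\min_{0\le n\le n_i} x_n^{i,i}$. Since $x \in S_d^{\rm r}$, Lemma \ref{3379} gives that $\bar x \in \bar S_d^{\rm r}$, so by Lemma \ref{numbersubtrees} there is a unique reduced forest $({\bf f_r}, c_{\bf f_r}) = \Phi^{-1}(\bar x, {\rm c}) \in \bar{\mathcal{F}}_d^{\rm r}$. It then remains to "fill in" each aggregated vertex of $({\bf f_r},c_{\bf f_r})$ with the genuine subtree prescribed by the relevant block of $x$: the subforest ${\bf f}^{(i)}$ of the desired forest is reconstructed from $(x^{(i)}_k, 0\le k \le n_i)$ by inverting (\ref{6245}), which is legitimate precisely because $n_i = \tau^{(i)}_{k_i}$ (the chain $x^{i,i}$ first hits $-k_i$ exactly at time $n_i$) so the $k_i$ subtrees of type $i$ are read off as the successive excursions of $x^{i,i}$ below its running minimum, with their inter-type children counted by the increments of $x^{i,j}$, $j \ne i$. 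Attaching these subtrees along the edges of $({\bf f_r},c_{\bf f_r})$ — the root of the $\ell$-th subtree of type $j$ being hung under the appropriate vertex, as dictated by the values $\bar x^{i,j}_k$ — yields a well-defined forest $({\bf f},c_{\bf f})$, and one checks it has reduced forest $({\bf f_r},c_{\bf f_r})$, lies in $\mathcal{F}_d^{\rm r}$, and satisfies $\Psi(({\bf f},c_{\bf f})) = x$ by (\ref{7345}) combined with the reconstruction of the excursions.

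Finally I would verify that the two maps are mutually inverse. One direction: starting from $({\bf f},c_{\bf f}) \in \mathcal{F}_d^{\rm r}$, the reduced forest of $\Phi^{-1}(\Phi(({\bf f},c_{\bf f})))$ is $\Phi^{-1}(\overline{\Psi(({\bf f},c_{\bf f}))}, {\rm c}) = \Phi^{-1}(\Psi(({\bf f_r},c_{\bf f_r})),{\rm c}) = ({\bf f_r},c_{\bf f_r})$ by (\ref{7345}) and Lemma \ref{numbersubtrees}, and the subtrees attached are exactly those recovered from the excursions of the $x^{(i)}$, which are the original subtrees of $({\bf f},c_{\bf f})$; uniqueness of the inversion of (\ref{6245}) then forces $\Phi^{-1}\circ\Phi = \mathrm{id}$. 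The other direction, $\Phi \circ \Phi^{-1} = \mathrm{id}$, is the computation $\Psi(\Phi^{-1}(x,{\rm c})) = x$ and ${\rm c}_{\Phi^{-1}(x,{\rm c})} = {\rm c}$ just described. I expect the main obstacle to be purely bookkeeping: making precise the "attach the $\ell$-th type-$j$ subtree under the right vertex" step so that the breadth-first labelling of the assembled forest matches the one implicitly used to define $\Psi$, and thereby confirming $\Psi(({\bf f},c_{\bf f})) = x$ on the nose rather than up to reordering. Everything substantive — existence and uniqueness of smallest solutions, additivity under concatenation, and the reduced-forest bijection — has already been done in Lemmas \ref{preliminary}, \ref{numbersubtrees} and \ref{3379}, so the proof is essentially an assembly argument. $\;\;\Box$
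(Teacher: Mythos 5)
Your proposal is correct and follows essentially the same route as the paper: pass to the reduced forest, use Lemma \ref{numbersubtrees} together with Lemma \ref{3379} to handle the smallest-solution bookkeeping, and recover the full forest by grafting the subtrees read off from the excursions of the chains $x^{(i)}$ between the passage times $\tau^{(i)}_{k-1}$ and $\tau^{(i)}_k$. The only (harmless) deviation is in the well-definedness step, where you re-run the tree-by-tree reconstruction argument of Lemma \ref{numbersubtrees} with Lemma \ref{preliminary}~$(ii)$, whereas the paper gets ${\rm n}\in S_d^{\rm r}$ more directly from $\bar{x}\in\bar{S}_d^{\rm r}$ via the converse part of Lemma \ref{3379}.
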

\begin{proof} Let us first check that for any $({\bf f},c_{\bf f})\in\mathcal{F}_d^{\rm r}$, we have
$\Phi(({\bf f},c_{\bf f}))\in S_d^{\rm r}\times C_d^{\rm r}$. By definition \ref{4365}, $(i)$,
${\rm c}_{({\bf f},c_{\bf f})}\in C_d^{\rm r}$. Now set $x=\Psi(({\bf f},c_{\bf f}))$ and let
$({\bf f_r},c_{\bf f_r})\in\bar{\mathcal{F}}_d^{\rm r}$ be the forest, $({\bf f},c_{\bf f})$ once reduced.
Then from (\ref{7345}) and Lemma \ref{numbersubtrees}, this reduced forest is encoded by $(\bar{x},c_{\bf f})$.
Let  ${\rm k}=(k_1,\dots,k_d)$ be the number of subtrees of type $i$ in this forest (this is actually the number of vertices of type $i$),
then ${\rm k}$ is the length of $\bar{x}$ and it is the smallest solution of $({\rm r},\bar{x})$, i.e. $\bar{x}\in\bar{S}_d^{\rm r}$.
Moreover ${\rm n}=(n_1,\dots,n_d)$, where $n_i=\tau^{(i)}_{k_i}$ is the length of $x$, and from Lemma \ref{3379},
it is the smallest solution of $({\rm r},x)$. So, we have proved that $\Psi(({\bf f},c_{\bf f}))\in S_d^{\rm r}$.

Conversely let $(x,{\rm c})\in S_d^{\rm r}\times C_d^{\rm r}$. From Lemma \ref{numbersubtrees},
to $(\bar{x},{\rm c})$, we may associate a unique forest $({\bf f_r},c_{\bf f_r})\in\bar{\mathcal{F}}_d^{\rm r}$. Then let $k_i$ be the number
of vertices of type $i$ in this forest. For $ k\in [k_i]$, let $u_k^{(i)}$ be the $k$-th vertex of type $i$ in the breadth first search order of
$({\bf f_r},c_{\bf f_r})$. Then in $({\bf f_r},c_{\bf f_r})$, we replace the vertex $u_k^{(i)}$ by the subtree of type $i$ which is encoded
by the Lukasiewicz-Harris path $(x^{i,i}(\tau^{(i)}_{k-1}+l)+k-1\,,\;0\le l\le \tau^{(i)}_k-\tau^{(i)}_{k-1})$. We know about the
progeny of each vertex of this subtree, thanks to the chains $(x^{i,j}(\tau^{(i)}_{k-1}+l)-x^{i,j}(\tau^{(i)}_{k-1})\,,\;0\le l\le \tau^{(i)}_k-\tau^{(i)}_{k-1})$,
so that we can graft at the proper place, on this subtree, all the corresponding subtrees of the other types which have been constructed
from the same procedure. Proceeding this way, we construct a unique forest $({\bf f},c_{\bf f})\in\mathcal{F}_d^{\rm r}$ and we easily check
that $\Psi(({\bf f},c_{\bf f}))=x$.$\;\;\Box$\\
\end{proof}

\section{Multitype branching trees and forests}\label{random}

Let $\nu_i$, $i\in [d]$ be distributions on $\mathbb{Z}_+^d$, such that $\nu=(\nu_1,\dots,\nu_d)$ is the progeny law of
an irreducible, critical or subcritical, non-degenarate branching process, as defined in Section \ref{intro}. Assume that we can define on the
reference probability space $(\Omega,\mathcal{G},P)$ introduced in Section \ref{intro}, a family $({\bf P}_{\rm c})_{{\rm c}\in [d]^\infty}$
of probability measures and an infinite sequence  ${\rm F}=({\rm T}_k)_{k\ge1}$ of independent random trees, such that for each
${\rm c}=(c_1,c_2,\dots)\in [d]^\infty$ and $k\ge1$, under ${\bf P}_{\rm c}$,  ${\rm T}_k$ is a branching tree, with progeny law $\nu$,
whose root has type $r({\rm T}_k)=c_k$. In particular, for any random time $\alpha:(\Omega,\mathcal{G})\rightarrow\mathbb{N}$,
the sequence $\{{\rm T}_1,\dots,{\rm T}_{\alpha}\}$ is an element of  $\mathcal{F}_d$.  The infinite sequence ${\rm F}$ will be called a $d$-type branching
forest with progeny law $\nu$.\\

Let us denote by ${\rm F}^{(i)}$ the subforest of type $i$ of ${\rm F}$, as it is defined in subsection \ref{space}.
From the properties of $\nu$, it follows that for each $i\in [d]$,
the subforest ${\rm F}^{(i)}$ is a.s.~infinite, so that we may define a $\mathbb{Z}^d$ valued infinite random sequence
$X^{(i)}=(X^{i,1},\dots,X^{i,d})$, for $i\in [d]$, in the same way as in (\ref{6245}), that is $X_0^{(i)}=0$ and
\begin{equation}\label{4577}
X_{n+1}^{i,j}-X_n^{i,j}=p_j({\rm u}_{n+1}^{(i)})\,,\;\;\mbox{if $i\neq j\;$ and}\;\;\;X_{n+1}^{i,i}-X_n^{i,i}=
p_i({\rm u}_{n+1}^{(i)})-1\,,\;\;\;n\ge0\,,
\end{equation}
where $({\rm u}^{(i)}_n)_{n\ge1}$ is the labeling of ${\rm F}^{(i)}$ in its breadth first search order.
\begin{thm}\label{Lukasiewicz-Harris}
Let ${\rm F}$ be a $d$-type branching forest with progeny law $\nu$.
\begin{itemize}
\item[$1.$] Then for any ${\bf\rm c}=(c_1,c_2\dots)\in [d]^\infty$, under ${\bf P}_{\rm c}$, the chains
\begin{equation}\label{3577}
X^{(i)}=(X^{i,1},\dots,X^{i,d})\,,\;\;\;i=1,\dots,d
\end{equation}
are independent
random walks with step distribution
\[{\bf P}_{\rm c}\Big(X^{(i)}_1=(q_1,\dots,q_d)\Big)=\nu_i(q_1,\dots,q_{i-1},q_i+1,q_{i+1},\dots,q_d)\,.\]
In particular, their laws do not depend on ${\bf\rm c}$. For each $i\in [d]$, $X^{i,i}$ is a downward skip free random walk such
that $\liminf_{n\rightarrow+\infty}X_n^{i,i}=-\infty$, a.s.~and for $j\neq i$, $X^{i,j}$ is a renewal process.
\item[$2.$] For all integer $r\ge1$, almost surely there is ${\rm r}\in \mathbb{Z}_+^d$, with
$r=r_1+\dots+r_d$ and  such that there is a smallest solution ${\rm n}$ to the system $({\rm r},X)$.
\item[$3.$] Conversely, let $Y$ be a copy of $X$ and ${\bf\rm c}=(c_1,c_2\dots)\in [d]^\infty$. Then to $Y$ and ${\bf\rm c}$,
we may associate a unique $d$-type branching forest, with progeny law $\nu$ and root type sequence ${\rm c}$, whose coding random
walk is $Y$.
\end{itemize}
\end{thm}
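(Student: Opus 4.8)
The plan is to prove the three parts in the order $2$, $1$, $3$, since the description of the law in part $1$ is cleanest not via a direct exploration of the subforests (their breadth first orders are not adapted to a single exploration of ${\rm F}$, because the roots of ${\rm F}^{(i)}$ depend on the progenies of vertices of other types) but via transporting the product form of the law of a branching forest through the bijection $\Phi$ of Theorem \ref{coding}; part $3$ is then the inverse map $\Phi^{-1}$ together with the law identification of part $1$. I expect part $1$ — the independence of $X^{(1)},\dots,X^{(d)}$ and the passage from finite to infinite forests — to be the main obstacle. For part $2$, fix $r\ge1$ and work under ${\bf P}_{\rm c}$ with ${\rm c}=(c_1,c_2,\dots)$ the (deterministic) root type sequence; put ${\rm r}=(r_1,\dots,r_d)$, $r_i=\#\{k\le r:c_k=i\}$, so ${\rm r}>0$. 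Since each ${\rm T}_k$ has an a.s.\ finite extinction time, $\{{\rm T}_1,\dots,{\rm T}_r\}$ is a.s.\ finite; let $n_i$ be its number of vertices of type $i$. The first $n_i$ vertices of ${\rm F}^{(i)}$ are precisely the type $i$ vertices of $\{{\rm T}_1,\dots,{\rm T}_r\}$, so from $(\ref{4577})$ the quantity $\sum_{i}X^{i,j}(n_i)$ equals the number of type $j$ vertices of $\{{\rm T}_1,\dots,{\rm T}_r\}$ having a parent in the forest, minus $n_j$, that is $(n_j-r_j)-n_j=-r_j$. Hence $(n_1,\dots,n_d)$ solves the system $({\rm r},X)$ of $(\ref{2418})$, and Lemma \ref{preliminary}$(i)$ provides a smallest solution ${\rm n}$.

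For part $1$, the skip-free property of $X^{i,i}$ and the monotonicity of $X^{i,j}$ ($j\neq i$) are immediate from $(\ref{4577})$. For the step law and independence, fix $r$ and consider the event on which $\{{\rm T}_1,\dots,{\rm T}_r\}$ contains a vertex of each type; with ${\rm r}$ as above this event coincides with $\{\Psi(\{{\rm T}_1,\dots,{\rm T}_r\})\in S_d^{\rm r}\}$ and its ${\bf P}_{\rm c}$-probability tends to $1$. The defining property of a branching forest gives, for a fixed ${\bf f}\in\mathcal{F}_d^{\rm r}$,
\[
{\bf P}_{\rm c}\big(\{{\rm T}_1,\dots,{\rm T}_r\}={\bf f}\big)=\prod_{v\in{\bf v}({\bf f})}\nu_{c_{\bf f}(v)}\big(p_1(v),\dots,p_d(v)\big).
\]
Grouping the vertices of ${\bf f}$ by type — the type $i$ ones being $u_1^{(i)},\dots,u_{n_i}^{(i)}$ in the breadth first order of ${\bf f}^{(i)}$, where ${\rm n}$ is the length of $x:=\Psi({\bf f})$ — and using $(\ref{6245})$ to rewrite $\nu_i(p_1(u_m^{(i)}),\dots,p_d(u_m^{(i)}))$ as $\mu_i(x_m^{(i)}-x_{m-1}^{(i)})$, where $\mu_i(q_1,\dots,q_d):=\nu_i(q_1,\dots,q_{i-1},q_i+1,q_{i+1},\dots,q_d)$, I would deduce through Theorem \ref{coding} that
\[
{\bf P}_{\rm c}\big(\Psi(\{{\rm T}_1,\dots,{\rm T}_r\})=x\big)=\prod_{i=1}^d\prod_{m=1}^{n_i}\mu_i\big(x_m^{(i)}-x_{m-1}^{(i)}\big),\qquad x\in S_d^{\rm r}.
\]
This is precisely the law of the array of $d$ independent random walks with step laws $\mu_1,\dots,\mu_d$, each run up to the corresponding coordinate of the smallest solution of the system $({\rm r},\cdot)$. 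Since ${\rm F}^{(i)}$ is a.s.\ infinite, $\Psi(\{{\rm T}_1,\dots,{\rm T}_r\})$ increases to $X$ along a consistent family as $r\to\infty$, and a routine limiting argument (the joint law of $(\Psi(\{{\rm T}_1,\dots,{\rm T}_r\}))_r$ being recovered tree by tree as in Lemma \ref{numbersubtrees}) yields that the $X^{(i)}$ are independent random walks with step laws $\mu_i$, not depending on ${\rm c}$; for $j\neq i$, $X^{i,j}$ is then a renewal process. Finally $X^{i,i}$ has step mean $m_{ii}-1$, and I would observe that a positive right Perron eigenvector $v$ of the irreducible matrix ${\bf M}$ satisfies $\rho v_i=({\bf M}v)_i\ge m_{ii}v_i$, so $m_{ii}\le\rho\le1$, with $m_{ii}=\rho$ forcing $\sum_{j\neq i}m_{ij}v_j=0$, i.e.\ $m_{ij}=0$ for all $j\neq i$, against irreducibility; hence $m_{ii}<1$, and by the strong law of large numbers (the step having finite mean $m_{ii}-1<0$) $X_n^{i,i}\to-\infty$, so in particular $\liminf_n X_n^{i,i}=-\infty$, a.s.

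For part $3$, let $Y$ be a copy of $X$ and ${\rm c}\in[d]^\infty$. For each $r$, with ${\rm r}^{(r)}$ the type counts of $(c_1,\dots,c_r)$, part $2$ (valid for $Y$ since $Y\ed X$) gives a.s.\ a smallest solution ${\rm n}^{(r)}$ of $({\rm r}^{(r)},Y)$; using irreducibility together with $\liminf_n Y^{i,i}_n=-\infty$ one checks ${\rm n}^{(r)}\uparrow\infty$, so for $r$ large the truncation $Y^{(r)}:=(Y^{i,j}_k,\,0\le k\le n^{(r)}_i,\,i,j\in[d])$ lies in $S_d^{{\rm r}^{(r)}}$. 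I would then set ${\bf f}^{(r)}:=\Phi^{-1}(Y^{(r)},(c_1,\dots,c_r))\in\mathcal{F}_d^{{\rm r}^{(r)}}$ by Theorem \ref{coding}; the tree-by-tree nature of the reconstruction in the proof of Lemma \ref{numbersubtrees} shows that ${\bf f}^{(r)}$ is the forest of the first $r$ trees of ${\bf f}^{(r+1)}$, so the ${\bf f}^{(r)}$ glue into a unique infinite $d$-type forest ${\rm F}'$ with root type sequence ${\rm c}$, and $\Psi({\rm F}')=Y$ because $\Psi({\bf f}^{(r)})=Y^{(r)}$ and ${\rm n}^{(r)}\uparrow\infty$. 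Uniqueness of a forest with root type sequence ${\rm c}$ and coding walk $Y$ is the injectivity of $\Phi$ in Theorem \ref{coding}, applied at each level $r$. That ${\rm F}'$ is a branching forest with progeny law $\nu$ follows by identifying laws: writing ${\rm F}'=\Theta(Y,{\rm c})$ for the deterministic map $\Theta$ just built, an actual branching forest ${\rm F}$ with progeny law $\nu$ and root type sequence ${\rm c}$ (which exists by hypothesis) satisfies ${\rm F}=\Theta(\Psi({\rm F}),{\rm c})$ by the same construction, while $\Psi({\rm F})\ed Y$ by part $1$; hence ${\rm F}'=\Theta(Y,{\rm c})\ed\Theta(\Psi({\rm F}),{\rm c})={\rm F}$, so ${\rm F}'$ has the law of a branching forest with progeny law $\nu$ and root type sequence ${\rm c}$.
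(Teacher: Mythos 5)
Your proof is correct, and on the two substantive points it takes a genuinely different route from the paper. For part 1 the paper argues directly: since the breadth first orders of the subforests ${\rm F}^{(i)}$ do not depend on the topology of the rest of ${\rm F}$, the branching property immediately gives that the $X^{(i)}$ are independent walks with the stated step law, and $\liminf_n X^{i,i}_n=-\infty$ is deduced from the a.s.\ infiniteness of ${\rm F}^{(i)}$ (the Lukasiewicz path of an infinite forest of finite subtrees hits every level $-k$), not from the law of large numbers. You instead transport the product form of the law of a finite branching forest through the bijection of Theorem \ref{coding}, obtaining the law of the walks stopped at the smallest solution ${\rm n}^{(r)}$, and then let $r\to\infty$; this is a legitimate and in some ways more careful argument (it addresses the point you flag, that no single adapted exploration produces the $d$ subforest orders), but the step you call ``routine'' deserves one more line: from the stopped identity one gets, for every fixed ${\rm M}$ and path $x$ on $[0,{\rm M}]$, ${\bf P}_{\rm c}(X=x\mbox{ on }[0,{\rm M}])\le\prod_i\prod_m\mu_i(\Delta x^{(i)}_m)$ by letting $r\to\infty$ on $\{{\rm n}^{(r)}\ge{\rm M}\}$, and since both sides sum to $1$ over all admissible $x$ the inequality is an equality; this avoids any circular appeal to properties of the limiting independent walks. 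Your Perron--Frobenius argument that $m_{ii}<1$ plus the strong law is fine, but note it uses finiteness of the mean matrix (implicit in the paper's standing assumptions), whereas the paper's argument via the infinitude of ${\rm F}^{(i)}$ does not. For part 2 the paper codes the first $r$ trees and invokes Theorem \ref{coding} (with a case analysis when types are missing), while you verify directly that the type counts of $\{{\rm T}_1,\dots,{\rm T}_r\}$ solve the system \eqref{2418} and then call Lemma \ref{preliminary}$(i)$; this is slightly more self-contained and neatly sidesteps the missing-type cases. Your part 3 is a fleshed-out version of the paper's one-line deduction, with the useful additions of the consistency of the tree-by-tree reconstruction and the identification of the law of the reconstructed forest via ${\rm F}=\Theta(\Psi({\rm F}),{\rm c})$ and $\Psi({\rm F})\ed Y$.
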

\begin{proof} Part $1.$ just follows from the construction (\ref{4577}) of $X$. Since the order on the subforests ${\rm F}^{(i)}$
does not depend on the particular topology of ${\rm F}$,  from the branching property, it is clear that the chains $X^{(i)}$, $i\in [d]$
are  independent random walks. Then the expression of the law of $X^{(i)}$ is a direct consequence of $(\ref{4577})$. Recall that
$X^{i,i}$ is the Lukasiewicz-Harris path of the subforest ${\rm F}^{(i)}$, see section 6.2 of \cite{pi}. Moreover, since from the properties
of $\nu$, each subforest ${\rm F}^{(i)}$ is a.s.~infinite, the random walk $X^{i,i}$ satisfies
$\liminf_{n\rightarrow\infty}X^{i,i}_n=-\infty$, a.s. The fact that $X^{i,j}$, for $i\neq j$ is a renewal process is obvious.\\

Then part 2 is a direct consequence of the construction of $X$ and Theorem \ref{coding}. Let $r\ge1$ and first assume that the finite
forest $\{{\rm T}_1,\dots,{\rm T}_r\}$ consisting in the first $r$ trees of F contains at least one vertex of each type. Let ${\rm r}$
be the unique element of $\mathbb{Z}^d$ such that $\{{\rm T}_1,\dots,{\rm T}_r\}\in\mathcal{F}_d^{\rm r}$. Then,
by coding the forest $\{{\rm T}_1,\dots,{\rm T}_r\}$ and by applying Theorem \ref{coding}, we obtain that there is
${\rm n}\in\mathbb{N}^d$ which is the smallest solution of the system $({\rm r},X)$. Now if for instance $n\in [d-1]$ types are
missing in the first $r$ trees of F, then we can apply the same arguments by replacing $d$ by $d-n$ in Theorem \ref{coding}.

Then part 3.~is a consequence of part 2. For each $r\ge1$, we may associate a unique forest to $Y$ with $r$ trees.
 Since $r$ can be arbitrarily large, the result is proved.$\;\;\Box$\\
\end{proof}

In the same spirit as in \cite{mi}, the Lukasiewicz-Harris type coding that is displayed in Theorem \ref{Lukasiewicz-Harris} might be used to
obtain invariance principles, for any functional that can be encoded simply enough. Besides this result should provide a way to obtain a proper definition
of continuous multitype branching trees and forests. Actually, it is natural to think that the latter objects are coded by $d$ independent, $d$-dimensional
L\'evy processes, with $d-1$ increasing coordinates and a spectrally positive coordinate.\\

Now we are going to apply our coding of multitype branching forests to the law of their total population and give a proof of Theorem \ref{main}.
To that aim, we first need to establish the crucial combinatorial Lemma \ref{crucial}. Let $E$ be $\mathbb{Z}_+$ or a finite integer interval of the type
$\{0,1,\dots,m\}$, with $m\ge1$ and let $g:E\rightarrow \mathbb{Z}^d$, be any application such that $g(0)=0$. For $n\in E$ such that $n\ge1$,
the $n$-cyclical permutations of $g$ are the $n$ applications $g_{q,n}$, $q=0,\dots,n-1$ which are defined on $E$ by:
\begin{equation}\label{def3}
g_{q,n}(h)\stackrel{\mbox{\tiny(def)}}{=}
\left\{\begin{array}{ll}g(q+h)-g(q)\;\;\;\;\;\;&\mbox{if}\;\;0\le h\le n-q\,,\\
g(h-(n-q))+g(n)-g(q)&\mbox{if}\;\;n-q\leq h\leq n\\
g(h)&\mbox{if}\;\;h\geq n\,.
\end{array}\right.
\end{equation}
Note that $g_{0,n}\equiv g$.
The transformation $g\mapsto g_{q,n}$ consists in inverting the parts $\{g(h),\,0\leq h\leq q\}$ and
$\{g(h),\,q\leq h\leq n\}$ in such a way that the new application, $g_{q,n}$, has the same values as $g$ at
0 and $n$, i.e. $g_{q,n}(0)=0$ and $g_{q,n}(n)=g(n)$.\\

Let $x\in S_d$, with finite length ${\rm n}=(n_1,\dots,n_d)\in\mathbb{N}^d$ and recall the notation $x^{(i)}=(x^{i,1},\dots,x^{i,d})$
from  Definition \ref{9076}. Then we define the ${\rm n}$-cyclical permutations of $x$ by
\begin{equation}\label{2956}
x_{{\rm q},{\rm n}}:=(x^{(1)}_{q_1,n_1},\dots,x^{(d)}_{q_d,n_d})\,,\;\;\mbox{for all ${\rm q}=(q_1,\dots,q_d)$ such that $0\le{\rm q}\le {\rm n}-1_d$,}
\end{equation}
where we have set $1_d=(1,1,\dots,1)$. Each sequence $x_{{\rm q},{\rm n}}$ will simply be called a cyclical permutation of
$x$. Note that there are $n_1n_2\dots n_d$, cyclical permutations of $x$.  Let  ${\rm r}=(r_1,\dots,r_d)\in\mathbb{Z}^d_+$ be such that
${\rm r}>0$ and assume that ${\rm n}$ is a solution of the
system $({\rm r},x)$. Then note that ${\rm n}$ is also a solution of the system $({\rm r},x_{{\rm q},{\rm n}})$, for all ${\rm q}$ such that
$0\le{\rm q}\le {\rm n}-1_d$, that is,
\[r_j+\sum_{i=1}^dx_{{\rm q},{\rm n}}^{i,j}(n_i)=0\,,\;\;j\in[d]\,.\]
This remark raises the question of the number of cyclical permutations $x_{{\rm q},{\rm n}}$ of $x$, such that ${\rm n}$
is the smallest solution of the system $({\rm r},x_{{\rm q},{\rm n}})$.

\begin{definition}\label{4265}  Let $x\in S_d$, with finite length ${\rm n}=(n_1,\dots,n_d)\in\mathbb{N}^d$.
Let ${\rm r}=(r_1,\dots,r_d)\in\mathbb{Z}^d_+$ be such that ${\rm r}>0$ and assume that ${\rm n}$ is a solution of the
system $({\rm r},x)$. For $0\le{\rm q}\le {\rm n}-1_d$, we say that $x_{{\rm q},{\rm n}}$ is
a good ${\rm n}$-cyclical permutation of $x$ with respect to ${\rm r}$, if ${\rm n}$ is the smallest solution of the system
$({\rm r},x_{{\rm q},{\rm n}})$, that is $x_{{\rm q},{\rm n}}\in S_d^{\rm r}$. When no confusion is possible, we will simply say
that $x_{{\rm q},{\rm n}}$ is a good cyclical permutation of $x$.
\end{definition}

\noindent This definition and the next lemma extend the following argument developed in \cite{ta} for the proof of the Ballot Theorem: For an integer valued sequence $(x_k,0\le k\le n)$ such that $\Delta x_k\ge -1$ and $x_0=0$, $x_n=-k$, there are exactly $k$ cyclical permutations $x_{q,n}$ such that $x_{q,n}$ first hits $-k$ at time $n$. Here is a generalisation of this result.

\begin{lemma}[Multivariate Cyclic Lemma]\label{crucial} Let $x\in S_d$, with length ${\rm n}=(n_1,\dots,n_d)\in\mathbb{N}^d$ and let
${\rm r}=(r_1,\dots,r_d)\in\mathbb{Z}^d_+$ be such that ${\rm r}>0$. Assume that ${\rm n}$ is a solution of the
system $({\rm r},x)$ such that $x^{i,i}(n_i)\neq0$, for all $i\in [d]$. Then the number of good cyclical permutations
of $x$ is $\det((-x^{i,j}(n_i))_{i,j\in [d]})$.
\end{lemma}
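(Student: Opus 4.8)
The plan is to reduce the statement to the reduced-forest situation, where each coordinate chain $x^{i,i}$ is already a ballot-type path, and then to run an induction on the dimension $d$, or equivalently on the total number $n_1+\dots+n_d$ of steps, using the classical one-dimensional cyclic (Dvoretzky--Motzkin) lemma as the base ingredient. First I would fix ${\rm r}$ and the length ${\rm n}$ and observe that, by Lemma~\ref{3379}, passing from $x$ to the associated $\bar x$ replaces $S_d^{\rm r}$ by $\bar S_d^{\rm r}$ and the length ${\rm n}$ by ${\rm k}=(k_1,\dots,k_d)$ with $k_i=-\min_n x^{i,i}_n=-x^{i,i}(n_i)$ (the last equality because ${\rm n}$ is assumed a solution, so $x^{i,i}(n_i)<0$). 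Crucially, the bijection $\Phi$ of Theorem~\ref{coding} is equivariant in the right sense: grafting the type-$i$ subtrees back onto the reduced forest shows that the number of good ${\rm n}$-cyclical permutations of $x$ equals the number of good ${\rm k}$-cyclical permutations of $\bar x$, since the interior structure of each subtree is carried along rigidly by a cyclic shift of the big chain and never affects whether the coordinatewise first-passage condition \eqref{2418} is first met at ${\rm k}$. So it suffices to prove the lemma for $x=\bar x\in\bar S_d^{\rm r}$, i.e.\ when $x^{i,i}_k=-k$ for all $k$, and then $\det((-x^{i,j}(n_i))_{i,j})=\det(K)$ with $K$ the matrix whose diagonal entries are the $k_i$ and whose off-diagonal entries are $-x^{i,j}(k_i)\le 0$; note ${\rm n}$ is already a solution means $k_j=r_j+\sum_{i\neq j}x^{i,j}(k_i)$, so $K$ is an M-matrix-type matrix with nonnegative row sums $r_j\ge0$ and $\sum_j r_j>0$, in particular $\det K>0$.

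Next I would set up the induction. In the reduced case a cyclical permutation $x_{{\rm q},{\rm n}}$ is good (Definition~\ref{4265}) iff, writing $y=x_{{\rm q},{\rm n}}$, we have for every ${\rm s}<{\rm n}$ with ${\rm s}$ a solution of \eqref{2418} for $y$ — there is none; equivalently, iff the reconstruction of Lemma~\ref{numbersubtrees} applied to $(y,{\rm c})$ produces a genuine $r$-tree forest and not a proper sub-collection. The standard way to count these is: condition on the last coordinate. Pick the index, say $d$, and consider the chain $x^{d,d}_k=-k$. Among the $n_d$ cyclic shifts $x^{(d)}_{q_d,n_d}$ of the $d$-th block, exactly those $q_d$ are "locally good'' (the $d$-th block, read alone with right-hand data, first returns to its target at $n_d$) for which a one-dimensional Kemperman/cyclic argument applies; but here the target depends on the other blocks, so I would instead argue globally. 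The cleaner route: use the bijection with forests directly. A forest in $\mathcal F_d^{\rm r}$ with prescribed vertex set (one vertex per position in each subforest) and prescribed progeny increments, together with a labelling of which of the $n_1n_2\cdots n_d$ "rooting choices'' is selected, is counted by the same device as in the single-type case — the good cyclical permutations are in bijection with the ways of choosing, for each type, a root among the cyclic class so that the whole forest closes up with exactly $r_i$ trees rooted of type $i$ in the order prescribed by ${\rm c}$. Counting these is exactly a lattice-path / parking-function count and evaluates to $\det K$ by the matrix-tree-type argument (the Lindström--Gessel--Viennot or a direct inclusion–exclusion over which coordinate "over-shoots'' first).

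Concretely, the key steps I would carry out are: \emph{(1)} reduce to $x=\bar x$ via Lemmas~\ref{3379}, \ref{numbersubtrees}, \ref{preliminary}, and record that then the claimed count is $\det K$ with $K=(-x^{i,j}(k_i))_{i,j}$, an M-matrix with row sums ${\rm r}\ge 0$, $\sum r_j>0$; \emph{(2)} prove by induction on $d$: for $d=1$ this is precisely the classical cyclic lemma, $\#\{\text{good shifts}\}=k_1=r_1=-x^{1,1}(n_1)=\det K$; \emph{(3)} for the inductive step, single out type $d$, split each candidate permutation according to the first coordinate $j_0$ for which the partial-sum condition \eqref{2418} is violated when we stop the other coordinates at their good times, apply part~$(ii)$ of Lemma~\ref{preliminary} to peel off that coordinate, and recognise the resulting recursion as the cofactor expansion $\det K=\sum_{j} k_j\,(\text{minor})-\sum_{i\neq j}x^{i,j}(k_i)(\text{minor})$; alternatively \emph{(3$'$)} give a direct bijective proof: show the good cyclical permutations of $x$ are in bijection with the set of spanning structures counted by $\det K$ via the Lindström–Gessel–Viennot lemma applied to the $d$ independent nondecreasing lattice paths $(k\mapsto x^{i,j}(k))_{i\neq j}$ together with the diagonal ballot paths. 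I expect step~\emph{(3)} — getting the bookkeeping of the inclusion–exclusion/cofactor expansion to match exactly, i.e.\ showing that a cyclical permutation fails to be good precisely when some proper sub-solution exists and organising these sub-solutions so that they are counted with the correct signs to yield $\det K$ rather than, say, the permanent — to be the main obstacle; this is the point where the off-diagonal monotonicity of $x^{i,j}$ and the fact that ${\rm n}$ is the \emph{smallest} solution (part~$(i)$ of Lemma~\ref{preliminary}) must be used most carefully, and where a naive one-coordinate-at-a-time induction risks over- or under-counting.
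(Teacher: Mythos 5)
There is a genuine gap: the heart of the lemma is never proved. Your step \emph{(1)} (reduction to the reduced case $x=\bar x$) is essentially the paper's strategy, but your steps \emph{(2)}--\emph{(3)} only list candidate strategies for the central count — a coordinate-peeling induction matched to a cofactor expansion, or an LGV/inclusion--exclusion bijection — and you yourself flag that the sign and over/under-counting bookkeeping is unresolved. That bookkeeping is precisely the content of the lemma, and the paper resolves it by a mechanism absent from your plan: it proves (Lemma \ref{4311}) that the number of good cyclical permutations of a reduced sequence is \emph{invariant} under the elementary "jump-transfer" operation \eqref{trans}, which moves one unit of the last jump of an off-diagonal coordinate back by one step; iterating this operation (together with cyclic shifts) reduces any reduced sequence to a \emph{simple system} with the same Laplacian matrix, for which the good permutations are counted directly and bijectively by simple forests grouped according to their associated elementary forests (Lemma \ref{crucial0}), giving $\sum_{(j_1,\dots,j_d)\in D}\prod_i k_{j_i i}$; the determinant then appears not through LGV or a signed expansion but through the directed matrix-tree theorem of Tutte/Moon (Lemma \ref{crucial1}), so no signs ever need to be managed — exactly the difficulty your induction would face. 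Without an argument of this kind (or a worked-out substitute), your proof does not establish the statement.

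Two further points in step \emph{(1)} need repair. First, your assertion that $-\min_{0\le k\le n_i}x^{i,i}_k=-x^{i,i}(n_i)$ "because ${\rm n}$ is a solution" is false: being a solution of $({\rm r},x)$ does not force each diagonal path to end at its running minimum (it need not be the \emph{smallest} solution), and the paper must first perform the preliminary cyclic shift $y=x_{{\rm n}',{\rm n}}$ at the first-passage times $n_i'=\tau^{(i)}_{k_i'}$ before forming $\bar y$; this is harmless only because the set of cyclical permutations of $x$ coincides with that of $y$, a point you would need to state. Second, the equality of the number of good permutations of $x$ and of $\bar x$ is not automatic from "grafting": it is the paper's Lemma \ref{4310}, proved by matching each good shift ${\rm q}$ of $\bar x$ with the shift ${\rm p}$ of $x$ at $p_i=\tau^{(i)}_{q_i}$ and checking via Lemma \ref{3379} that goodness transfers in both directions; your sketch takes this for granted.
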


\noindent This lemma will be proved in Section \ref{annex}. It is the essential argument  for the proof of the following extension of the Ballot
Theorem.

\begin{thm}[Multivariate Ballot Theorem]\label{ballot}
Let $Y=(Y^{(1)},\dots,Y^{(d)})$ be a stochastic process defined on $(\Omega,\mathcal{G},P)$,
with $Y^{(i)}=(Y^{i,j}(h), j\in [d],\,h\in\mathbb{Z}_+)$, $i\in [d]$ and $Y_0=0$.
We assume that the coordinates $Y^{i,j}$, for $i\neq j$ are $\mathbb{Z}_+$ valued, nondecreasing and that the coordinates $Y^{i,i}$
are $\mathbb{Z}$ valued and downward skip free. Fix  ${\rm n}=(n_1,\dots,n_d)\in\mathbb{N}^d$, then
we assume further that the process $Y$ is ${\rm n}$-cyclically exchangeable, that is for any
${\rm q}=(q_1,\dots,q_d)\in\mathbb{Z}_+^d$ such that ${\rm q}\le {\rm n}-1_d$,
\[
Y_{{\rm q},{\rm n}}\ed Y\,,
\]
where $Y_{{\rm q},{\rm n}}$ is defined as in $(\ref{2956})$ for deterministic functions.
Then for any ${\rm r}=(r_1,\dots,r_d)\in\mathbb{Z}_+^d$ such that ${\rm r}>0$
and $k_{ij}$, $i,j\in [d]$, such that $k_{ij}\in\mathbb{Z}_+$, for $i\neq j$ and $-k_{jj}=r_j+\sum_{i\neq j}k_{ij}$,
\begin{equation}
\begin{split}
&P\Big(Y^{i,j}_{n_i}=k_{ij},\,\mbox{$ i,j\in [d]$ and ${\rm n}$ is the smallest solution of $({\rm r},Y)$}\Big)\\
=&\frac{\mbox{\rm det}(-k_{ij})}{n_1n_2\dots n_d}P\Big(Y^{i,j}_{n_i}=k_{ij},\,i,j\in [d]\Big)\,.\label{5478}
\end{split}
\end{equation}
\end{thm}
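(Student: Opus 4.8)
The plan is to follow the classical strategy for proving Kemperman's ballot-type identities via a cyclic-lemma argument, now in the multivariate setting provided by Lemma~\ref{crucial}. The left-hand side of \eqref{5478} is a probability of a joint event: the terminal values of $Y$ are prescribed to be the $k_{ij}$, \emph{and} ${\rm n}$ is the smallest solution of the system $({\rm r},Y)$. First I would condition on the event $\{Y^{i,j}_{n_i}=k_{ij},\ i,j\in[d]\}$. Under this conditioning the path $Y$ restricted to $\{0,\dots,n_i\}$ in each coordinate block is a (random) element $x\in S_d$ with length ${\rm n}$, and ${\rm n}$ is automatically a solution of $({\rm r},x)$ since $-k_{jj}=r_j+\sum_{i\neq j}k_{ij}$ means $r_j+\sum_i x^{i,j}(n_i)=r_j+\sum_{i\ne j}k_{ij}+k_{jj}=0$. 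Moreover the hypothesis $-k_{jj}=r_j+\sum_{i\ne j}k_{ij}\ge r_j>0$ if $r_j>0$, but in general $-k_{jj}\ge1$ is forced for at least one $j$; more to the point, $x^{i,i}(n_i)=k_{ii}\le -r_i<0$ when $r_i\ge1$, and in fact one checks $x^{i,i}(n_i)=k_{ii}\ne 0$ for all $i$ (since $-k_{ii}=r_i+\sum_{j\ne i}k_{ji}$ and irreducibility-type positivity of the configuration forces this — this is exactly the hypothesis $x^{i,i}(n_i)\ne 0$ needed to invoke Lemma~\ref{crucial}), so the Cyclic Lemma applies.

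Second, I would exploit ${\rm n}$-cyclic exchangeability. For the conditioned path, all $n_1n_2\cdots n_d$ cyclical permutations $Y_{{\rm q},{\rm n}}$ are equally likely and each still has terminal values $k_{ij}$ (the cyclical permutation $g\mapsto g_{q,n}$ preserves the value at $n$). Hence
\[
P\Big(\text{${\rm n}$ smallest solution of $({\rm r},Y)$}\ \Big|\ Y^{i,j}_{n_i}=k_{ij}\Big)
=\frac{1}{n_1\cdots n_d}\,\E\Big[\#\{{\rm q}\le{\rm n}-1_d:\ Y_{{\rm q},{\rm n}}\in S_d^{\rm r}\}\ \Big|\ Y^{i,j}_{n_i}=k_{ij}\Big].
\]
By Lemma~\ref{crucial}, on the conditioning event the number of good cyclical permutations of the (deterministic) path is exactly $\det((-x^{i,j}(n_i))_{i,j})=\det((-k_{ij})_{i,j})$, which is a \emph{constant} not depending on the particular path realizing the terminal values. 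Therefore the conditional expectation equals $\det(-k_{ij})/(n_1\cdots n_d)$, and multiplying back by $P(Y^{i,j}_{n_i}=k_{ij})$ gives \eqref{5478}.

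The two points that need care, and which I expect to be the main obstacles, are: (a) verifying that the hypothesis of Lemma~\ref{crucial}, namely $x^{i,i}(n_i)\ne 0$ for all $i\in[d]$, indeed holds on the conditioning event — this should follow from $-k_{ii}=r_i+\sum_{j\ne i}k_{ji}$ together with the combinatorial fact that in a genuine reduced-forest configuration every type has at least one subtree, so $-k_{ii}\ge 1$, but one must phrase it purely in terms of the integers $r_i,k_{ij}$ without reference to a forest, or else simply add it as a standing assumption (the theorem's hypotheses on the $k_{ij}$ are exactly engineered so that this is forced); and (b) the measure-theoretic bookkeeping of the conditioning when some $n_i$ could a priori fail to be the smallest solution — but since we condition on the terminal values and ${\rm n}$ is then automatically \emph{a} solution, the only remaining question is minimality, which is precisely what the cyclic count measures. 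A final subtlety is that the event "$Y^{i,j}_{n_i}=k_{ij}$ for all $i,j$" together with the downward-skip-free / nondecreasing structure of $Y$ guarantees that the truncated path lies in $S_d$, so that Definition~\ref{4265} and Lemma~\ref{crucial} are literally applicable; this is immediate from the standing assumptions on $Y$. No deeper input is needed: the whole argument is "condition, use exchangeability to average over cyclic shifts, count the good shifts with the Cyclic Lemma."
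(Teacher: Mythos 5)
Your core argument is the same as the paper's: condition on the terminal values $Y^{i,j}_{n_i}=k_{ij}$, use ${\rm n}$-cyclic exchangeability to express the probability that ${\rm n}$ is the smallest solution as the average over the $n_1\cdots n_d$ cyclic shifts of the indicator of a good shift, and then invoke the Multivariate Cyclic Lemma \ref{crucial} to say that this count is the constant $\det(-k_{ij})$. (The paper phrases the averaging by decomposing over the orbit sets $E_{y,{\rm n}}$ and using uniformity on each orbit; your version via the conditional expectation of the number of good shifts is equivalent and, if anything, slightly cleaner since it does not require the shifts to be pairwise distinct.)

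There is, however, one genuine flaw: your claim in point (a) that the hypotheses force $k_{ii}\neq 0$ for all $i$ is false, and it is exactly the hypothesis $x^{i,i}(n_i)\neq 0$ that Lemma \ref{crucial} needs. The theorem only assumes $r_i\ge 0$ with ${\rm r}>0$, so it is perfectly possible that $r_i=0$ and $k_{ji}=0$ for all $j\neq i$, giving $k_{ii}=0$ while $n_i\ge 1$; for instance $d=2$, ${\rm r}=(1,0)$, $k_{12}=k_{21}=0$ yields $k_{22}=0$, and there is no irreducibility assumption to rule this out. In that case the Cyclic Lemma is not applicable and your argument stops. The missing (easy) step, which the paper supplies in its last sentence, is to check that both sides of \eqref{5478} vanish: on the left, $Y^{i,i}_{n_i}=k_{ii}=0$ with $n_i\ge 1$ and $Y^{i,i}$ downward skip free is incompatible with ${\rm n}$ being the smallest solution of $({\rm r},Y)$, since by Lemma \ref{preliminary}$(i)$ minimality forces $n_i$ to be the first time $Y^{i,i}$ attains its minimum on $[0,n_i]$, whereas that minimum is $\le 0=Y^{i,i}_0$ and is first attained strictly before $n_i$; on the right, $0=-k_{ii}=r_i+\sum_{j\neq i}k_{ji}$ forces $r_i=0$ and $k_{ji}=0$ for all $j\neq i$, so the $i$-th column of $(-k_{ij})$ vanishes and $\det(-k_{ij})=0$. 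With this degenerate case treated separately (rather than wrongly claimed to be excluded), your proof is complete and coincides with the paper's.
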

\begin{proof} If $P(Y^{i,j}_{n_i}=k_{ij},i\neq j)=0$, then the result is clearly true. Suppose that it is not the case and
let $y=(y^{(1)},\dots,y^{(d)})$ be a deterministic function such that for all $ i,j\in [d]$, $y^{i,j}$ is defined on
$\mathbb{Z}_+$, $y^{i,j}(0)=0$,  $y^{i,j}(n_i)=k_{ij}$ and
\begin{equation}\label{5211}
P\Big((Y^{(i)}(h),0\le h\le n_i))=(y^{(i)}(h),0\le h\le n_i)\Big)>0\,.
\end{equation}
For ${\rm h}=(h_1,\dots,h_d)\in\mathbb{Z}_+^d$,  we set
$Y({\rm h}):=(Y^{(1)}(h_1),\dots,Y^{(d)}(h_d))$ and for $0\le {\rm q}\le {\rm n} -1_d$, using the notation of (\ref{2956}), we set
$y_{{\rm q},{\rm n}}({\rm h}):=(y_{q_1,n_1}^{(1)}(h_1),\dots,y_{q_d,n_d}^{(d)}(h_d))$.
Let us consider the set
\[
E_{y,{\rm n}}=\Big\{(y_{{\rm q},{\rm n}}({\rm h}),0\le{\rm h}\le {\rm n}):0\le {\rm q}\le {\rm n}-1\Big\}\,,
\]
of ${\rm n}$-cyclical permutations of $y$ over the multidimensional interval $[0,{\rm n}]$.
Then ${\rm Card}(E_{y,{\rm n}})=n_1n_2\dots n_d$ and since $Y=(Y^{(1)},\dots,Y^{(d)})$ is a cyclically exchangeable chain,
the law of $(Y({\rm h}),\,0\le {\rm h}\le {\rm n})$, conditionally to the set
$\{(Y({\rm h}),\,0\le {\rm h}\le {\rm n})\in E_{y,{\rm n}}\}$
is the uniform law in the set $E_{y,{\rm n}}$. Moreover, assume that $k_{ii}\neq0$ for all $i\in [d]$, then conditionally to the set
$\{(Y({\rm h}),\,0\le {\rm h}\le {\rm n})\in E_{y,{\rm n}}\}$, from Lemma \ref{crucial}, the number of good
cyclical permutations of $ (Y({\rm h}),\,0\le {\rm h}\le {\rm n})$ is $\det(-k_{ij})$.
Therefore,
\begin{eqnarray*}
&&P\Big(Y^{i,j}_{n_i}=k_{ij},\,\mbox{$ i,j\in [d]$ and ${\rm n}$ is the}\\
&&\qquad\mbox{smallest solution of $({\rm r},Y)$}\,|\,(Y({\rm h}),\,0\le {\rm h}\le {\rm n})\in E_{y,{\rm n}}\Big)
=\frac{\det(-k_{ij})}{n_1n_2\dots n_d}\,.\end{eqnarray*}
Then we obtain the result by summing the identity
\begin{eqnarray*}
&&P\Big(Y^{i,j}_{n_i}=k_{ij},\,\mbox{$i,j\in [d]$ and}\\
&&\quad\mbox{ ${\rm n}$ is the smallest solution of $({\rm r},Y)$},(Y({\rm h}),\,0\le {\rm h}\le {\rm n})\in E_{y,{\rm n}}\Big)\\
&=&\frac{\mbox{det}(-k_{ij})}{n_1n_2\dots n_d}P((Y({\rm h}),\,0\le {\rm h}\le {\rm n})\in E_{y,{\rm n}})\,,\end{eqnarray*}
over all functions $y$ satisfying (\ref{5211}) and with different sets $E_{y,{\rm n}}$ of cyclical permutations. Finally, if $k_{ii}=0$, for
some $i\in [d]$, then since $n_i\ge1$, we can see that both members of identity (\ref{5478}) are equal to 0.$\;\;\Box$\\
\end{proof}

\noindent{\bf Proof of Theorem ${\bf \ref{main}}$}:
Let ${\rm r}$, ${\rm n}$ and $k_{ij}$ be as in the statement. Let ${\rm F}$ be  a $d$-type branching forest with progeny law
$\nu$, as defined at the beginning of this section and such that the first $r$ trees have root type sequence $(c_1,\dots,c_r)\in C^{\rm r}_d$.
Let $X$  be the coding random walk of F, as defined in (\ref{4577}). Recall the notation of Theorem \ref{main}, then from the coding
of Subsection \ref{codingforests} and Theorem \ref{Lukasiewicz-Harris}, we may check that
\begin{eqnarray}
&&\p_{\rm r}\Big(O_1=n_1,\dots, O_d=n_d, A_{ij}=k_{ij}, i,j\in [d],i\neq j\Big)\label{5472}\\
&=&{\bf P}_{\rm c}\Big(X^{i,j}_{n_i}=k_{ij},\,\mbox{$ i,j\in [d]$ and ${\rm n}$ is the smallest solution of $({\rm r},X)$}\Big)\,.\nonumber
\end{eqnarray}
Assume first that ${\rm n}\in\mathbb{N}^d$, then since $X$ is clearly cyclically exchangeable in the sense of Theorem \ref{ballot},
we obtain by applying this theorem,
\begin{eqnarray*}
&&{\bf P}_{\rm c}\Big(X^{i,j}_{n_i}=k_{ij},\,\mbox{$i,j\in [d]$ and ${\rm n}$ is the smallest solution of $({\rm r},X)$}\Big)\\
&=&\frac{\mbox{det}(-k_{ij})}{n_1n_2\dots n_d}{\bf P}_{\rm c}\Big(X^{i,j}_{n_i}=k_{ij},\,i,j\in [d]\Big)\,.\end{eqnarray*}
On the other hand, since from Theorem \ref{Lukasiewicz-Harris}, the random walks $X^{(i)}$, $i\in [d]$ are independent, we have
\begin{eqnarray*}
&&{\bf P}_{\rm c}\Big(X^{i,j}_{n_i}=k_{ij},\,\mbox{$i,j\in [d]$ and ${\rm n}$ is the smallest solution of $({\rm r},X)$}\Big)\\
&=&\frac{\mbox{det}(-k_{ij})}{n_1n_2\dots n_d}\prod_{i=1}^dP\Big(X^{i,j}_{n_i}
=k_{ij},\, j\in [d]\Big)\,.
\end{eqnarray*}
Then from the expression of the law of $X$ given in Theorem \ref{Lukasiewicz-Harris}, we obtain
\begin{equation}
\begin{split}
&{\bf P}_{\rm c}\Big(X^{i,j}_{n_i}=k_{ij},\,\mbox{$i,j\in [d]$ and ${\rm n}$ is the smallest solution of $({\rm r},X)$}\Big)\\
=&\frac{\mbox{det}(-k_{ij})}{n_1n_2\dots n_d}\prod_{i=1}^d\nu_i^{*n_i}(k_{i1},\dots,k_{i(i-1)},n_i+k_{ii},k_{i(i+1)},\dots,k_{id})\,,\label{6483}
\end{split}
\end{equation}
and the result is proved in this case.

Now with no loss of generality, let us assume that for some $0<d'<d$, we have $n_1,\dots,n_{d'}\in\mathbb{N}$ and  $n_{d'+1}=\dots=n_{d}=0$. We
point out that from the assumption $n_j\ge-k_{jj}=r_j+\sum_{i\neq j}k_{ij}$,  in this case we necessarily have $r_j=0$ and $k_{ij}=0$, for all $i\in [d]$
and $j=d'+1,\dots,d$. Then provided we also have $k_{ij}=0$, for all $i=d'+1,\dots,d$ and $j\in [d]$, $i\neq j$,
\begin{eqnarray*}
&&{\bf P}_{\rm c}\Big(X^{i,j}_{n_i}=k_{ij},\,d'+1\le i\le d,\,j\in [d]\Big)\nonumber\\
&=&\prod_{i=d'+1}^d\nu_i^{*n_i}(k_{i1},\dots,k_{i(i-1)},n_i+k_{ii},k_{i(i+1)},\dots,k_{id})=1\,.\label{6483}
\end{eqnarray*}
Define the chain $X$ restricted to $\mathbb{Z}^{d'}$, by  $X'=(X^{'(1)},\dots,X^{'(d')})$, where $X^{'(i)}=(X^{i,1},\dots,X^{i,d'})$, $i\in [d']$.
Set also ${\rm n}'=(n_1,\dots,n_{d'})$ and ${\rm r}'=(r_1,\dots,r_{d'})$. Then under our assumption on the integers $k_{ij}$, the following identity is
satisfied,
\begin{eqnarray*}
&&\Big\{X^{i,j}_{n_i}=k_{ij},\,\mbox{$i,j\in [d]$ and ${\rm n}$ is the smallest solution of $({\rm r},X)$}\Big\}\\
&=&\Big\{X'^{i,j}_{n_i}=k_{ij},\,\mbox{$i,j\in [d']$ and ${\rm n}'$ is the smallest solution of $({\rm r}',X')$ and}\\
&&X^{i,j}_{n_i}=0,1\le i\le d',\,d'+1\le j\le d\Big\}\,,
\end{eqnarray*}
so that identity (\ref{5472}) can be rewritten as,
\begin{eqnarray*}
&&\p_{\rm r}\Big(O_1=n_1,\dots, O_d=n_d, A_{ij}=k_{ij},  i,j\in [d],i\neq j\Big)\\
&=&{\bf P}_{\rm c}\Big(X'^{i,j}_{n_i}=k_{ij},\,\mbox{$i,j\in [d']$ and ${\rm n}'$ is the smallest solution of $({\rm r}',X')$ and}\\
&&X^{i,j}_{n_i}=0,1\le i\le d',\,d'+1\le j\le d\Big)\,.
\end{eqnarray*}
Moreover, conditionally on the set
\[
\Big\{X'^{i,j'}_{n_i}=k_{ij'},\, X^{i,j}_{n_i}=0,i, j'\in [d'],\,d'+1\le j\le d\Big\}\,,
\]
the chain $X'$ is cyclically exchangeable, so that we can conclude in the same way as above that
\begin{eqnarray*}
&&\p_{\rm r}\Big(O_1=n_1,\dots, O_d=n_d, A_{ij}=k_{ij}, i,j\in [d],i\neq j\Big)\\
&=&\frac{\mbox{det}((-k_{ij})_{1\le i,j\le d'})}{n_1n_2\dots n_{d'}}
\prod_{i=1}^{d'}\nu_i^{*n_i}(k_{i1},\dots,k_{i(i-1)},n_i+k_{ii},k_{i(i+1)},\dots,k_{id})\\
&=&\frac{\mbox{det}((-k_{ij})_{1\le i,j\le d'})}{n_1n_2\dots n_{d'}}
\prod_{i=1}^d\nu_i^{*n_i}(k_{i1},\dots,k_{i(i-1)},n_i+k_{ii},k_{i(i+1)},\dots,k_{id})\,.
\end{eqnarray*}
Finally, if $k_{ij}\neq0$, for some $i=d'+1,\dots,d$ and $j\in [d]$, then the first and the third members of the above equality are equal to 0.
So the proof is complete.
 $\;\;\Box$

\section{Proof of Lemmas \ref{preliminary} and \ref{crucial}}\label{annex}

\noindent {\bf Proof of Lemma \ref{preliminary}}. Assume that there is a solution ${\rm s}=(s_1,\dots,s_d)$ to the system
$({\rm r},x)$, that is: $r_j+\sum_{i=1}^dx^{i,j}(s_i)=0$, $j\in [d]$. Let us write this equation in the following form:
\[r_j+\sum_{i\ne j}x^{i,j}(s_i)+x^{j,j}(s_j)=0\,,\;\;\;j=1,\dots,d\,.\]
Then recall that for fixed $j$, when the $k_i$'s increase, the term $\sum_{i\neq j}x^{i,j}(k_i)$ increases and when $k_j$ increases, the term
$x^{j,j}(k_j)$ may decrease only by jumps of amplitude $-1$.\\

For ${\rm k}=(k_1,\dots,k_d)=0$, we have $r_j+\sum_{i\ne j}x^{i,j}(k_i)+x^{j,j}(k_j)=r_j$, $j\in [d]$. So for the left hand
side of the later equation to reach 0, each $k_i$ has to be at least $\tau^{(i)}(r_i)$, where $\tau^{(i)}$ has been defined in
(\ref{5170}). (In this proof, we found it more convenient to use the notation $\tau^{(i)}(k)$ for $\tau_k^{(i)}$.) Then either
\[r_j+\sum_{i\ne j}x^{i,j}(\tau^{(i)}(r_i))+x^{j,j}(\tau^{(j)}(r_j))=0\,,\;\;\;j=1,\dots,d\,,\]
or all of the terms $r_j+\sum_{i\ne j}x^{i,j}(\tau^{(i)}(r_i))+x^{j,j}(\tau^{(j)}(r_j))$,
$j\in [d]$ are greater or equal than 0, at least one of them being strictly greater than 0.\\

Then in the latter case, for $r_j+\sum_{i\ne j}x^{i,j}(k_i)+x^{j,j}(k_j)$ to attain 0, each of the $k_j$'s has to be at least
$\tau^{(j)}(r_j+\sum_{i\neq j}x^{i,j}(\tau^{(i)}(r_i)))$. This argument can be repeated until all of the terms
$r_j+\sum_{i\ne j}x^{i,j}(k_i)+x^{j,j}(k_j)$ attain 0. More specifically, set $v_1^{(j)}=r_j$ and for $n\ge1$,
\[v_{n+1}^{(j)}=r_j+\sum_{i\neq j}x^{i,j}(\tau^{(i)}(v_n^{(i)}))\,,\]
and set $k_j^{(n)}=\tau^{(j)}(v_n^{(j)})$. For $n\ge1$, either
\[r_j+\sum_{i\neq j}x^{i,j}(k_i^{(n)})+x^{j,j}(k_j^{(n)})=0\,,\;\;\;j=1,\dots,d\,,\]
or all of the terms $r_j+\sum_{i\neq j}x^{i,j}(k_i^{(n)})+x^{j,j}(k_j^{(n)})$, $j\in [d]$ are greater or equal than 0,
at least one of them being strictly greater than 0. In the later case, for all of the terms $r_j+\sum_{i\ne j}x^{i,j}(k_i)+x^{j,j}(k_j)$,
$j\in [d]$ to vanish, the index ${\rm k}$ has to be at least ${\rm k}^{(n+1)}=(k_1^{(n+1)},\dots,k_d^{(n+1)})$.
But since there is a solution ${\rm s}$ to the equation $({\rm r},x)$, there is necessarily a finite index $n_0$ such that
${\rm k}^{(n_0)}\le {\rm s}$ and
\[r_j+\sum_{i\ne j}x^{i,j}(k_i^{(n_0)})+x^{j,j}(k_j^{(n_0)})=0\,.\]
That is, for all $j\in [d]$, $k_j^{(n_0)}=\tau^{(j)}(r_j+\sum_{i\ne j}x^{i,j}(k_i^{(n_0)}))$.
Hence ${\rm k}^{(n_0)}$ is the smallest solution of the system $({\rm r},x)$. Moreover by definition, $k_i^{(n_0)}=\min\{n:x^{i,i}_n=
\min_{0\le k\le k_i^{(n_0)}}x^{i,i}_k\}$. This proves the first part of the lemma.\\

Let ${\rm r}'=(r_1',\dots,r_d')\in\mathbb{Z}_+^d$ be such that ${\rm r}'\le{\rm r}$. Then we prove that there is a smallest
solution to the system $({\rm r}',x)$ similarly. More specifically, since there is a smallest solution to the
equation
\[r_j'+\sum_{i\ne j}x^{i,j}(s_i)+x^{j,j}(s_j)=r_j'-r_j\le 0\,,\;\;\;j=1,\dots,d\,,\]
then by the same arguments as in the first part, we prove that there is a smallest solution to the equation
\[r_j'+\sum_{i\ne j}x^{i,j}(s_i)+x^{j,j}(s_j)=0\,,\;\;\;j=1,\dots,d\,.\]
Let ${\rm k}$ and ${\rm k}'$ be respectively the smallest solutions of $({\rm r},x)$ and $({\rm r}',x)$.
Then we have,
\[r_j-r'_j+r'_j+\sum_{i=1}^dx^{i,j}(k'_i)+\sum_{i=1}^d\tilde{x}^{i,j}(k_i-k'_i)=0\,.\]
Since $r'_j+\sum_{i=1}^dx^{i,j}(k'_i)=0$, the above equation shows that ${\rm k}-{\rm k}'$ is a solution of the system
$({\rm r}-{\rm r}',\tilde{x})$. Moreover, if ${\rm k}''$ was a strictly smaller solution of $({\rm r}-{\rm r}',\tilde{x})$,
than ${\rm k}-{\rm k}'$ (i.e. ${\rm k}''<{\rm k}-{\rm k}'$) then we would have
\begin{eqnarray*}
r_j-r'_j+r'_j+\sum_{i=1}^dx^{i,j}(k'_i)+\sum_{i=1}^d\tilde{x}^{i,j}(k''_i)&=&r_j+\sum_{i=1}^dx^{i,j}(k'_i+k''_i)\\
&=&0\,,
\end{eqnarray*}
so that ${\rm k}'+{\rm k}''$ would be a solution of $({\rm r},x)$, strictly smaller than ${\rm k}$, which is
a contradiction.~\qed

\noindent {\bf Proof of Lemma \ref{crucial}}.
Recall from Lemma \ref{numbersubtrees} that to each forest $({\bf f},c_{\bf f})\in\bar{\mathcal{F}}_d^{\rm r}$, we can associate a
 system $({\rm r},x)$, with smallest solution ${\rm k}$, where $x\in\bar{S}_d^{\rm r}$. The matrix $(k_{ij})=(x^{i,j}(k_i))$ of the last values of $x$ will be called the {\it LV-matrix} of $x$ or the
LV-matrix of the forest $({\bf f},c_{\bf f})$.
Recall that $k_j=-k_{jj}=r_j+\sum_{i\neq j}k_{ij}$ and that for $i\neq j$, $k_{ij}$ is the total number of vertices in ${\rm\bf v}({\rm\bf f})$
with type $j$, whose parent has type $i$ and that $k_i=-k_{ii}$ is the total number of vertices of type $i$ in ${\rm\bf v}({\rm\bf f})$.\\

For the next definition, we say that the integers $a_1,\dots,a_n$ are ranked {\it in the increasing order, up to a cyclical permutation}, if there is a
cyclical permutation $\sigma$ of the set $[n]$, such that $a_{\sigma(1)},\dots,a_{\sigma(n)}$ are ranked in the increasing order.

\newpage

\begin{definition}\label{simple} Let ${\rm r}\in\mathbb{Z}_+^d$ be such that ${\rm r}>0$.
\begin{itemize}
\item[$1.$] An element $({\bf f},c_{\bf f})$ of  $\bar{\mathcal{F}}_d^{\rm r}$ is said to be a {\em simple forest} if for
each type $i\in [d]$, at most one vertex of type $i$ in ${\bf v}({\bf f})$ has children $($the others are leaves$)$ and if its vertices are labelled in the following way: To each vertex of type
$i$, we associate a different integer in $[k_i]$ which is called its {\em label}. For each $i$, the sequence of appearance of labels of vertices of type $i$ in the
breadth first search of $({\bf f},c_{\bf f})$ is  ranked in the increasing order, up to a cyclical permutation, see Figure~$\ref{eforest}$.

\item[$2.$] For  $x\in\bar{S}_d^{\rm r}$, the system $({\rm r},x)$, with smallest solution ${\rm k}=(k_1,\dots,k_d)$  is called a
{\em simple system}  if for all $i\in [d]$, there is $k_i'\le k_i-1$, such that for all $k=0,\dots, k_i'$ and $i\neq j$, $x^{i,j}(k)=0$
and for all $k$ such that $k_i'+1\le k\le k_i$ and $i\neq j$, $x^{i,j}(k)=x^{i,j}(k_i)$.
\end{itemize}
\end{definition}

\noindent In other words, simple systems are elements $x\in\bar{S}_d^{\rm r}$ such that for all $i\in [d]$, the sequences
$x^{i,j}$, $ j\in [d]$ have at most one positive jump that occurs at the same point in $\{0,\dots,k_i\}$.
Then we have the following straightforward result.

\begin{proposition}\label{1239} If a forest is simple, then its associated system
$($as defined in Lemma $\ref{numbersubtrees})$ is simple.
\end{proposition}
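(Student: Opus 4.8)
The plan is to read the structure of the coding sequence $x=\Psi(({\bf f},c_{\bf f}))$ directly off the combinatorial description of a simple forest. Let $({\bf f},c_{\bf f})\in\bar{\mathcal{F}}_d^{\rm r}$ be simple, let $x=\Psi(({\bf f},c_{\bf f}))$ be its coding sequence and let ${\rm k}=(k_1,\dots,k_d)$ be the smallest solution of the system $({\rm r},x)$. As in the proof of Lemma \ref{numbersubtrees}, $k_i$ is exactly the number of vertices of type $i$ of $({\bf f},c_{\bf f})$ — recall the forest is reduced, so every subtree of type $i$ is a single vertex and the subforest ${\bf f}^{(i)}$ consists of the $k_i$ vertices $u^{(i)}_1,\dots,u^{(i)}_{k_i}$ listed in its own breadth first search order — and $x\in\bar{S}_d^{\rm r}$, i.e. $x^{i,i}_k=-k$ for all $i$ and $k$. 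So only the off-diagonal coordinates $x^{i,j}$, $i\neq j$, have to be analysed.

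First I would fix $i\in[d]$ and recall from $(\ref{6245})$ that for $1\le k\le k_i$ and $j\neq i$ the increment $x^{i,j}_k-x^{i,j}_{k-1}$ equals $p_j(u^{(i)}_k)$, the number of children of type $j$ of the $k$-th vertex of type $i$. Since the forest is simple, at most one vertex of type $i$ is fertile. If such a vertex $w_i$ exists, let $m_i$ be its rank in the breadth first search order of ${\bf f}^{(i)}$ and put $k_i'=m_i-1$; then for $0\le k\le k_i'$ each $u^{(i)}_k$ has rank $<m_i$, hence is a leaf, so every increment vanishes and $x^{i,j}_k=0$, while for $m_i\le k\le k_i$ the only possibly nonzero increment is the one at $k=m_i$, equal to $p_j(w_i)$, so $x^{i,j}_k=x^{i,j}_{k_i}$ for all $k\ge m_i=k_i'+1$. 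If instead every vertex of type $i$ is a leaf, put $k_i'=k_i-1$: then $x^{i,j}\equiv 0$ and both requirements hold trivially. In either case $k_i'\le k_i-1$ and the two conditions of Definition \ref{simple}, part $2.$, are met (with the second index range understood as $k=k_i'+1,\dots,k_i$). As $i\in[d]$ was arbitrary, $({\rm r},x)$ is a simple system.

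Finally I would add one remark: the labelling of the vertices attached to a simple forest in Definition \ref{simple}, part $1.$, plays no role in this argument; only the structural property ``at most one fertile vertex of each type'' is used, and it is precisely this property that forces, for each $i$, the whole family $(x^{i,j})_{j\in[d]}$ to have its unique positive jump at one common point of $\{0,\dots,k_i\}$ — which is the informal reformulation of ``simple system'' given just after Definition \ref{simple}. I do not expect any genuine obstacle here; the only point needing a little care is keeping the three orderings straight (the breadth first search of ${\bf f}$, that of each subforest ${\bf f}^{(i)}$, and the induced indexing of $x^{(i)}$) and matching the somewhat compressed index ranges in the statement of Definition \ref{simple}, part $2.$
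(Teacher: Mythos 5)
Your argument is correct: it is exactly the routine verification from the definitions that the paper leaves out, since Proposition \ref{1239} is stated there without proof as a ``straightforward result'' (the increments of $x^{i,j}$, $j\neq i$, record the type-$j$ children of successive type-$i$ vertices, so at most one fertile vertex of type $i$ forces a single common jump index, and you handle the all-leaves case and the garbled index range in Definition \ref{simple} sensibly). Nothing further is needed.
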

\begin{proof} Recall the notation of the beginning of Subsection \ref{codingforests}. If $({\bf f},c_{\bf f})$ is a simple forest, then for each $i$, there is at most one index $k=0,\dots,k_i-1$ such that $x_{k+1}^{i,j}-x_k^{i,j}=p_j(u_{k+1}^{(i)})>0$, for some $j\in[d]$, $(j\neq i)$. That is for all $k'\neq k$ and $j\in[d]$, $x_{k'+1}^{i,j}-x_{k'}^{i,j}=0$. \qed
\end{proof}

\begin{definition}\label{8348} An {\em elementary} forest is a forest of $\mathcal{F}_d$ that contains exactly one vertex of each type.
In particular, each elementary forest contains exactly $d$ vertices and is coded by the $d$ couples
$(j_i,i)$, $i\in [d]$, where $j_i$ is the type of the parent of the vertex of type $i$. If the vertex of type $i$ is a root,
then we set $j_i=0$.
We define the set $D$ of vectors $(j_1,\dots,j_d)$, $0\le j_i\le d$ such that $(j_i,i)$, $i\in [d]$ codes an elementary forest.
\end{definition}

\begin{lemma}\label{crucial0} Let ${\rm r}=(r_1,\dots,r_d)\in\mathbb{Z}_+^d$ such that ${\rm r}>0$ and let $({\rm r},x)$ be a simple
system with smallest solution ${\bf\rm k}=(k_1,\dots,k_d)\in\mathbb{N}^d$, then the number of good cyclical permutations
of $x$ is
\begin{equation}\label{1384}
\sum_{(j_1,\dots,j_d)\in D}\prod_{i=1}^d k_{j_i i}\,,
\end{equation}
where for $i,j\in [d]$, $k_{ij}=x^{i,j}(k_i)$ and where we have set $k_{0i}=r_i$.
\end{lemma}
\begin{proof} The proof will be performed by reasoning on forests.
Let $({\rm r},x)$ be as in the statement. From Lemma \ref{numbersubtrees} we can associate to $({\rm r},x)$ a forest $({\bf f},c_{\bf f})$ of
$\bar{\mathcal{F}}_d^{\rm r}$ such that for each type $i\in [d]$, at most one vertex of type $i$ in ${\bf v}({\bf f})$ has children $($the others are leaves$)$.
Then by labeling all the vertices of this forest in the breadth first search order, we obtain a simple forest, see Definition \ref{simple}.
Note that for each good cyclical permutation $x_{{\rm q},{\rm k}}$ of $x$, the system $({\rm r},x_{{\rm q},{\rm k}})$ is simple itself and to each one, we may
associate a unique simple forest which is obtain by cyclical permutations of the vertices of type $i$ in $({\bf f},c_{\bf f})$, for each $i$.
Conversely, recall  Proposition \ref{1239}, then the simple system $({\rm r},y)$
which is associated to each simple forest with LV-matrix $(k_{ij})$ through Lemma \ref{numbersubtrees} is necessarily obtained from a good
cyclical permutation of $x$. Indeed the corresponding sequences $y^{i,j}$, for $i\neq j$, have exactly one jump and are such that $y^{i,j}(k_i)=k_{ij}$.
So $y$ is nothing but a cyclical permutation of $x$. These arguments prove that the number of good cyclical permutations of $x$ is equal to the number
of simple forests with LV-matrix $(k_{ij})$.

Then we are going to prove that the number of simple forests with LV-matrix $(k_{ij})$ is $\sum_{(j_1,\dots,j_d)\in D}\prod_{i=1}^d k_{j_i i}$. We make the
additional assumption that for each $i$, there is exactly one vertex who has children. Then observe that to each simple forest, we can associate a unique elementary
forest in the following way: the vertex of type $j$ in the elementary forest is the parent of
the vertex of type $i$ if, in the simple forest the parent of the vertex of type $i$ that has children has type $j$ (recall
that $j=0$ if the vertex of type $i$ is a root). An example of an  elementary forest associated to a simple forest is given in Figure \ref{eforest}.
\begin{figure}[hbtp]

\vspace*{-6.5cm}

{\includegraphics[height=420pt,width=500pt]{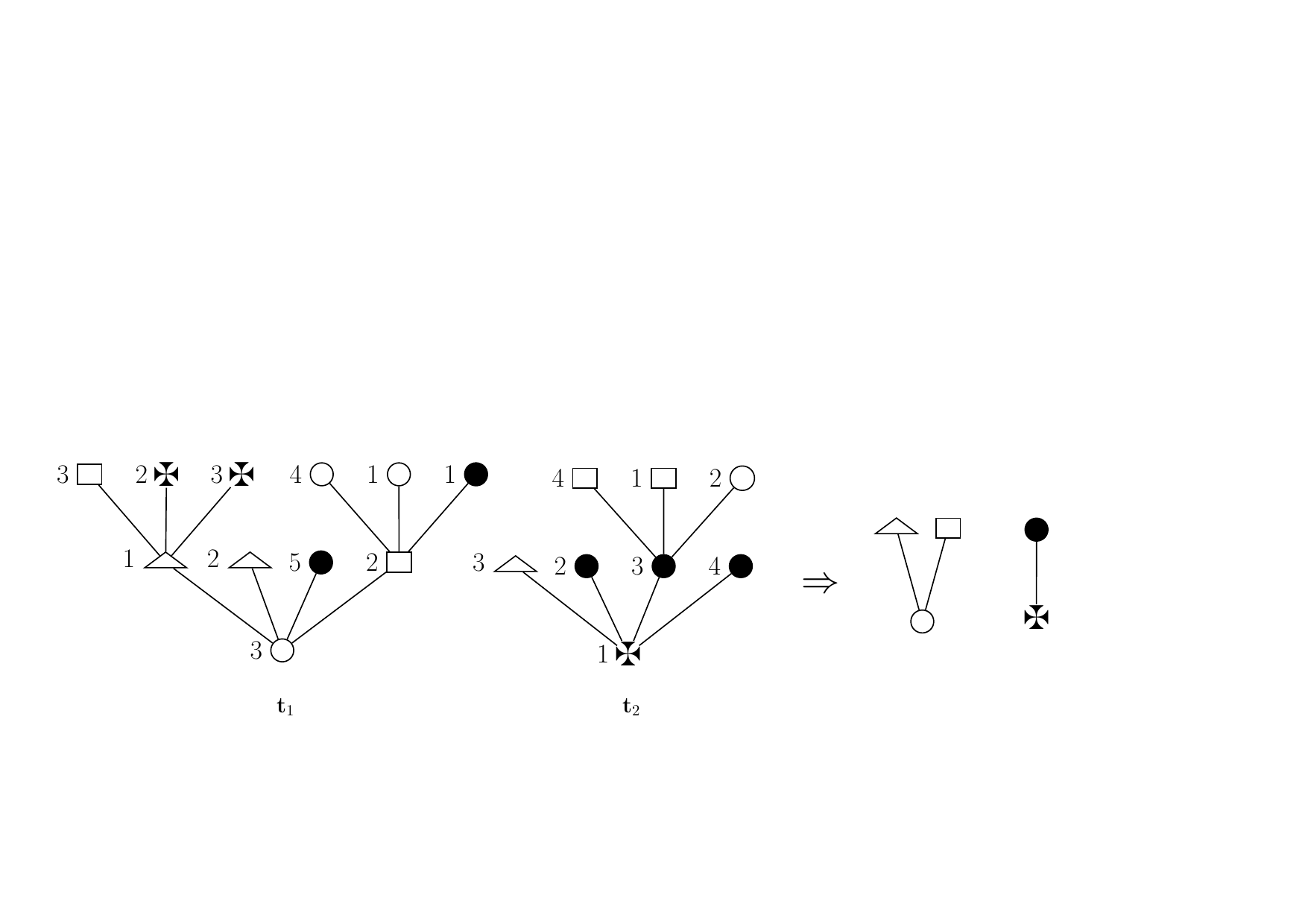}}

\vspace{-3cm}

\caption{A simple forest and its associated elementary forest.}\label{eforest}
\end{figure}
Then let $(j_1,\dots,j_d)\in D$. We easily see that the monomial $\prod_{i=1}^d k_{j_i i}$ is the number
of simple forests such that for each $i$, the parent of the vertex of type $i$ who has children has type $j_i$. Indeed, there are $k_{j_ii}$
possibilities to choose the vertex of type $i$ that has children. In other words, $\prod_{i=1}^d k_{j_i i}$ is the number
of all possible simple forests to which we can associate the same elementary forest
which is coded by $(j_i,i)$, $i\in [d]$. Then in order to obtain the total number of simple forests with LV-matrix $(k_{ij})$, it remains
to perform the summation of these monomials over all the possible elementary forests. So we obtained the formula of the statement, under our additional assumption.

Then we have proved the result for simple systems such that for all $i\in[d]$, there is $j\neq i$ with $k_{ij}>0$. Now assume that for all $i\in [d-1]$, there is $j\neq i$
such that $k_{ij}>0$ and that $k_{dj}=0$, for all $j\in [d]$. Then in such a system, we have $x^{(d)}\equiv 0$. Let us consider the system $({\rm r}',x')$, where
${\rm r}'=(r_1,\dots,r_{d-1})$ and $x'=(x^{(1)},\dots,x^{(d-1)})$. From what has just been proved, the number of good cyclical permutation of $({\rm r}',x')$ is $h_{d-1}:=\sum_{(j_1,\dots,j_{d-1})\in D'}\prod_{i=1}^{d-1} k_{j_i i}$ where $D'$ is the set that is defined in Definition \ref{8348} and where we replaced $d$ by
$d-1$.  Then in order to obtain all the good cyclical permutations of $x$ it remains to consider the $k_d$ cyclical permutations of the sequence $x^{(d)}$. Since the
latter are all identical, we have actually $k_d\times h_{d-1}$ good cyclical permutations
of $x$. Then we conclude by noticing that $k_d\times h_{d-1}=\sum_{(j_1,\dots,j_{d})\in D}\prod_{i=1}^{d} k_{j_i i}$, since
$k_d=r_d+\sum_{i\neq d}k_{id}=\sum_{i=0,\,i\neq d}^dk_{id}$. The general case where $k_{ij}=0$, for all $j\in [d]$, for other types  $i$ is obtained in the same
manner. \qed
\end{proof}

\noindent It is known that expression (\ref{1384}) is a determinant, see for instance the remark after Proposition 7 in \cite{bm}. This determinant is specified in the following lemma. The proof
which is given here essentially aims at quoting some references from which this result can be derived.
\begin{lemma}\label{crucial1}
For any ${\rm r}\in\mathbb{Z}_+^d$ and any integer valued matrix $(k_{ij})_{i,j\in [d]}$, such that $k_{ij}$ is nonnegative for
$i\neq j$ and  $-k_{jj}=r_j+\sum_{i\neq j}k_{ij}$, $j\in [d]$,
\[
\mbox{\rm det}(-k_{ij})=\sum_{(j_1,\dots,j_d)\in D}\prod_{i=1}^d k_{j_i i}\,,
\]
where the set $D$ is defined in Definition $\ref{8348}$ and $k_{0i}=r_i$.
\end{lemma}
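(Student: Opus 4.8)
The statement is a weighted, rooted form of the Matrix--Tree theorem, and the plan is to prove it directly from the permutation expansion of the determinant. First I would isolate the only consequence of the hypotheses that gets used: combining $k_{0i}=r_i$ with $-k_{jj}=r_j+\sum_{i\neq j}k_{ij}$ gives
\[
-k_{ii}=\sum_{l\in\{0,1,\dots,d\}\setminus\{i\}}k_{li}\,,\qquad i\in[d]\,.
\]
So in $M:=(-k_{ij})_{i,j\in[d]}$ the off-diagonal entries are the free parameters $-k_{ij}$ ($i\neq j$) and each diagonal entry is a sum of the $k_{li}$ and $r_i$.

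Next I would write $\det M=\sum_{\sigma}\mathrm{sgn}(\sigma)\prod_i M_{i\sigma(i)}$ over permutations $\sigma$ of $[d]$, decompose $\sigma$ into cycles, and on each fixed point $i$ of $\sigma$ replace $M_{ii}$ by the sum $\sum_{l\neq i}k_{li}$ above. This turns $\det M$ into a sum over pairs $(\sigma,\phi)$, where $\sigma$ is a permutation of $[d]$ and $\phi$ assigns to each fixed point $i$ of $\sigma$ a ``parent'' $\phi(i)\in\{0,1,\dots,d\}\setminus\{i\}$; a short computation with the sign of a cyclic permutation shows the attached coefficient is $(-1)^{c(\sigma)}$, where $c(\sigma)$ is the number of cycles of $\sigma$ of length $\geq 2$, and the monomial is $\prod_{i\in\mathrm{Fix}(\sigma)}k_{\phi(i),i}\prod_{i\notin\mathrm{Fix}(\sigma)}k_{i,\sigma(i)}$. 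I would then encode each such pair by the map $g\colon[d]\to\{0,1,\dots,d\}$ that sends a fixed point $i$ of $\sigma$ to $\phi(i)$ and a vertex of a $\sigma$-cycle to its predecessor along that cycle. One checks that $g(i)\neq i$, that the monomial equals $\prod_{i=1}^{d}k_{g(i),i}$, and — directing the arc $i\to g(i)$ and treating $0$ as a sink — that the cycles of the functional digraph of $g$ are exactly the cycles of $\sigma$ of length $\geq 2$ together with the cycles traced out by $\phi$ among the fixed points.

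Finally I would group the terms of $\det M$ according to $g$. Given $g$ with set of cycles $\mathrm{Cyc}(g)$, a pair $(\sigma,\phi)$ produces $g$ precisely when $\sigma$ realizes some subset $S\subseteq\mathrm{Cyc}(g)$ as genuine $\geq 2$-cycles while the cycles in $\mathrm{Cyc}(g)\setminus S$ are realized by $\phi$ (all vertices off the cycles being forced to be fixed points of $\sigma$ with $\phi=g$ there), and then $c(\sigma)=|S|$. Hence the coefficient of $\prod_{i=1}^d k_{g(i),i}$ in $\det M$ is $\sum_{S\subseteq\mathrm{Cyc}(g)}(-1)^{|S|}$, which is $0$ unless $\mathrm{Cyc}(g)=\emptyset$ and equals $1$ otherwise. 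Since $g$ has no cycle exactly when iterating $g$ from any vertex of $[d]$ reaches $0$, that is exactly when $(g(1),\dots,g(d))$ lies in the set $D$ of Definition~\ref{8348} (with $g(0i)$ read as $r_i$), this yields $\mathrm{det}(-k_{ij})=\sum_{(j_1,\dots,j_d)\in D}\prod_{i=1}^d k_{j_i i}$. The step I expect to require the most care is this fibre description of the encoding $(\sigma,\phi)\mapsto g$ — in particular that a cycle of the digraph of $g$ can equally arise from a cycle of $\sigma$ or from fixed points of $\sigma$ whose $\phi$-values run around the cycle, and that the two choices change $c(\sigma)$ by exactly one; the sign bookkeeping and the final alternating sum are then routine. (Alternatively one may simply invoke the directed Matrix--Tree theorem for the complete digraph on $\{0,1,\dots,d\}$ with arc $i\to j$ weighted $k_{ji}$ and $0$ absorbing, for which $D$ indexes the spanning arborescences rooted at $0$ and $(-k_{ij})_{i,j\in[d]}$ is the grounded Laplacian; but the self-contained argument above is just as short.)
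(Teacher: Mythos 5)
Your argument is correct, but it takes a different route from the paper: the paper proves Lemma \ref{crucial1} in two lines by quoting the directed matrix-tree theorem of Tutte, in the formulation of Moon (Theorem 3.1 of \cite{moo}), after observing that the family $\mathcal{F}_{W,L}$ there with $W=L=\{v_0\}$ is in bijection with the set $D$ of elementary forests and that $(-k_{ij})$ is the corresponding grounded Laplacian --- i.e.\ exactly the alternative you mention in your closing parenthesis. What you supply instead is a self-contained proof of that matrix-tree identity: expand $\det(-k_{ij})$ over permutations, use $-k_{ii}=\sum_{l\in\{0,\dots,d\}\setminus\{i\}}k_{li}$ (with $k_{0i}=r_i$) to resolve each diagonal entry, encode a pair $(\sigma,\phi)$ by the parent map $g$, and cancel via the alternating sum $\sum_{S\subseteq\mathrm{Cyc}(g)}(-1)^{|S|}$. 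The key verifications all go through: the sign is $\mathrm{sgn}(\sigma)(-1)^{d-|\mathrm{Fix}(\sigma)|}=(-1)^{c(\sigma)}$; setting $g=\sigma^{-1}$ on non-fixed points makes the monomial $\prod_i k_{g(i),i}$ because reindexing turns $\prod_{i\notin\mathrm{Fix}}k_{\sigma^{-1}(i),i}$ into $\prod_{i\notin\mathrm{Fix}}k_{i,\sigma(i)}$; there are no ``mixed'' cycles of $g$, since from a non-fixed vertex the iterates of $g$ stay inside that $\sigma$-cycle, so the fibre over $g$ is indeed parametrized by the subsets of $\mathrm{Cyc}(g)$ realized as genuine $\sigma$-cycles, with $c(\sigma)=|S|$; and cycle-freeness of $g$ (with $0$ absorbing) is precisely membership of $(g(1),\dots,g(d))$ in $D$. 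Note also that, like the paper's statement, your proof never needs the nonnegativity of the $k_{ij}$: it is a polynomial identity in the entries. The trade-off is clear: the paper's citation is shorter and leans on a classical result, while your argument keeps the paper self-contained at the cost of the careful fibre/sign bookkeeping you rightly flag as the delicate step.
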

\begin{proof} The proof is a direct consequence of the matrix tree theorem for directed graphs, due to Tutte \cite{tu}, see Section 3.6, page 470 therein.
However, Theorem 3.1 in \cite{moo} that implies Tutte's theorem is actually easier to apply, since it uses a setting which is closer to ours. Let us consider a set
$\{v_0,v_1,\dots,v_d\}$ of $d+1$ vertices and in the notations of \cite{moo}, set $W=L=\{v_0\}$. Then the family $\mathcal{F}_{W,L}$ that is described
in Theorem 3.1 of \cite{moo} is in bijection with the set of elementary forests, or equivalently with the set $D$, and identity $(3.2)$ in this theorem is
exactly $\mbox{\rm det}(-k_{ij})=\sum_{(j_1,\dots,j_d)\in D}\prod_{i=1}^d k_{j_i i}$.\qed
\end{proof}

\begin{lemma}\label{4311}
Let $x\in S_d$, with length ${\rm k}=(k_1,\dots,k_d)\in\mathbb{N}^d$. Let ${\rm r}=(r_1,\dots,r_d)\in\mathbb{Z}^d_+$ be such that
${\rm r}>0$ and assume that ${\rm k}$ is a solution of the system $({\rm r},x)$. Assume moreover that $x^{i,i}_k=-k$, for all
$k=0,\dots,k_i$ and $i\in [d]$. Then the number of good cyclical permutations of $x$ is $\mbox{\rm det}((x^{i,j}(k_i))_{i,j\in [d]})$.
\end{lemma}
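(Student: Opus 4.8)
\textbf{Proof plan for Lemma \ref{4311}.} The plan is to reduce the general statement to the special case of \emph{simple} systems, which is settled by Lemma \ref{crucial0} together with the determinant identity of Lemma \ref{crucial1}. The key observation is that the number of good cyclical permutations of $x$ depends only on the Laplacian matrix $(k_{ij})=(x^{i,j}(k_i))$ of $x$, and \emph{not} on the finer structure of $x$. Indeed, by Lemma \ref{numbersubtrees} the sequence $x\in\bar S_d^{\rm r}$ corresponds to a reduced forest $({\bf f},c_{\bf f})\in\bar{\mathcal F}_d^{\rm r}$, and each good cyclical permutation $x_{{\rm q},{\rm k}}$ of $x$ corresponds (again via Lemma \ref{numbersubtrees}, since $x_{{\rm q},{\rm k}}\in S_d^{\rm r}$ has smallest solution ${\rm k}$ and satisfies $x_{{\rm q},{\rm k}}^{i,i}(k)=-k$, hence lies in $\bar S_d^{\rm r}$) to a reduced forest with the \emph{same} Laplacian matrix — the cyclical permutations only shuffle, within each type $i$, the order in which the $k_i$ vertices of type $i$ are read in the breadth-first search, while keeping the total counts $k_{ij}$ of type-$j$ children of type-$i$ vertices unchanged.

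First I would make this reduction precise. For a fixed admissible matrix $(k_{ij})$ with $k_{ij}\ge 0$ for $i\ne j$ and $-k_{jj}=r_j+\sum_{i\ne j}k_{ij}$, let $N(k_{ij})$ denote the number of reduced forests in $\bar{\mathcal F}_d^{\rm r}$ with that Laplacian matrix; equivalently, by the above correspondence, $N(k_{ij})$ counts the good cyclical permutations of \emph{any} $x\in\bar S_d^{\rm r}$ with Laplacian matrix $(k_{ij})$, so in particular the count is the same for all such $x$. Now, among all $x\in\bar S_d^{\rm r}$ with Laplacian matrix $(k_{ij})$, choose the simple one: this is the sequence for which, for each $i$, all positive jumps of $x^{i,j}$, $j\ne i$, are concentrated at a single point of $\{0,\dots,k_i\}$ — such an $x$ exists and is a simple system in the sense of Definition \ref{simple} (the corresponding forest is a simple forest, where at most one vertex of each type has children). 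For this simple $x$, Lemma \ref{crucial0} gives that the number of good cyclical permutations equals $\sum_{(j_1,\dots,j_d)\in D}\prod_{i=1}^d k_{j_i i}$, with $k_{0i}=r_i$. By Lemma \ref{crucial1}, this sum equals $\mbox{\rm det}((-k_{ij})_{i,j\in [d]})$, and since $k_{ii}\le 0$ while off-diagonal entries are $k_{ij}\ge 0$, one has $(-k_{ij})_{i,j} = (x^{i,j}(k_i))_{i,j}$ only after a sign check: on the diagonal $-k_{ii}=-x^{i,i}(k_i)=k_i$, whereas $x^{i,j}(k_i)=k_{ij}$ for $i\ne j$. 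So $\mbox{\rm det}((-k_{ij})_{i,j}) = \mbox{\rm det}(M)$ where $M$ has $M_{ii}=-k_{ii}=k_i$ and $M_{ij}=k_{ij}$ for $i\ne j$; this is exactly $\mbox{\rm det}((x^{i,j}(k_i))_{i,j\in[d]})$ up to reconciling the diagonal sign, which I would state carefully — in fact $x^{i,i}(k_i)=-k_i$, so the matrix $(x^{i,j}(k_i))_{i,j}$ has diagonal $-k_i$ and off-diagonal $k_{ij}$; negating row $i$ and column $i$ appropriately, or rather noting $\det(-k_{ij}) = \det((x^{i,j}(k_i))_{i,j})$ after adjusting signs on the off-diagonal, one gets the claimed value. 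I would verify this linear-algebra bookkeeping explicitly so the determinant appearing in the statement is literally $\det((x^{i,j}(k_i))_{i,j\in[d]})$.

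The main obstacle I expect is the invariance claim: carefully justifying that $N(k_{ij})$ — equivalently, the number of good cyclical permutations — genuinely depends only on $(k_{ij})$. This requires two things: (i) that every good cyclical permutation of a given $x\in\bar S_d^{\rm r}$ has the same Laplacian matrix as $x$ (immediate, since $x_{{\rm q},{\rm k}}^{i,j}(k_i)=x^{i,j}(k_i)$ by the definition (\ref{def3}) of cyclical permutation, which preserves the value at the endpoint $k_i$), and (ii) that, conversely, any $y\in\bar S_d^{\rm r}$ with the same Laplacian matrix and the same length ${\rm k}$ is obtained from $x$ by a (unique, good) cyclical permutation. Part (ii) is the delicate point; one route is the forest-theoretic one used in the proof of Lemma \ref{crucial0}: via Lemma \ref{numbersubtrees}, $x$ and $y$ correspond to reduced forests $({\bf f},c_{\bf f})$ and $({\bf g},c_{\bf g})$ with identical Laplacian matrices, and one must argue that $({\bf g},c_{\bf g})$ arises from $({\bf f},c_{\bf f})$ by cyclically permuting, within each type, the breadth-first reading order of the vertices of that type — which is precisely what a cyclical permutation $x\mapsto x_{{\rm q},{\rm k}}$ does to the coding sequence. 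Once (i) and (ii) are in place, the good cyclical permutations of $x$ are in bijection with those of the simple $x'$ sharing the same Laplacian matrix, and the lemma follows from Lemmas \ref{crucial0} and \ref{crucial1}. \qed
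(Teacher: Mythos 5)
Your overall strategy---pass to a simple system with the same Laplacian matrix and then invoke Lemmas \ref{crucial0} and \ref{crucial1}---is the same as the paper's, but the step on which everything hinges is justified by a false claim, so there is a genuine gap. Your point $(ii)$ asserts that any $y\in\bar S_d^{\rm r}$ with the same length ${\rm k}$ and the same Laplacian matrix as $x$ is a cyclical permutation of $x$. This is not true: a cyclical permutation preserves, for each $i$, the multiset of increments of $x^{(i)}$, whereas equality of Laplacian matrices only constrains the endpoint values $x^{i,j}(k_i)$. For instance take $d=2$, ${\rm r}=(1,1)$, ${\rm k}=(2,3)$, diagonal coordinates $x^{i,i}(k)=y^{i,i}(k)=-k$, let $x^{2,1}=y^{2,1}$ have increments $1,0,0$, and let $x^{1,2}$ have increments $2,0$ while $y^{1,2}$ has increments $1,1$. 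Both $x$ and $y$ belong to $\bar S_2^{\rm r}$, have the same Laplacian matrix ($k_{12}=2$, $k_{21}=1$), and admit ${\rm k}$ as (smallest) solution, yet $y$ is not a cyclical permutation of $x$, since the cyclic shifts of $x^{(1)}$ only realize the increment profiles $(2,0)$ and $(0,2)$. Relatedly, your identification of the number of good cyclical permutations of a fixed $x$ with $N(k_{ij})$, the total number of reduced forests having Laplacian matrix $(k_{ij})$, cannot be correct either: the good cyclical permutations of $x$ only yield forests whose per-type degree sequences are cyclic shifts of those of the forest coded by $x$, while $N(k_{ij})$ counts forests with arbitrary degree profiles and involves binomial factors (compare Theorem \ref{unlabeled} with $n_i=-k_{ii}$); if those two numbers were equal, the lemma itself would be false.

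The real content of Lemma \ref{4311} is precisely the invariance you tried to obtain for free: that the number $N_{{\rm r},x}$ of good cyclical permutations depends only on the endpoint values $x^{i,j}(k_i)$, across sequences that are \emph{not} related by cyclic shifts. The paper proves this by the jump-transfer transformation $(\ref{trans})$, which moves one unit of the last jump of a chosen coordinate $x^{m,1}$ one step earlier: it first shows $N_{{\rm r},\tilde x}\ge N_{{\rm r},x}$ (the only problematic shifts are those with $q_m=k_m-1$, and for each good permutation of $x$ that is lost one constructs, via the auxiliary system $({\bf e}_1,\hat x)$ and Lemma \ref{preliminary} $(ii)$, a good permutation of $\tilde x$ that is not good for $x$), and then gets equality because iterating the transformation $k_m$ times returns to $x$; since any $x$ is connected by such moves and cyclic shifts to a simple system with the same Laplacian matrix, Lemmas \ref{crucial0} and \ref{crucial1} conclude. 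Some argument of this kind is what your proposal is missing. As a minor point, your sign bookkeeping should simply say that the nonnegative count is $\det$ of the matrix with diagonal entries $k_i=-x^{i,i}(k_i)$ and off-diagonal entries $-x^{i,j}(k_i)$, i.e.\ $\det((-x^{i,j}(k_i)))$ as in Lemma \ref{crucial} and Theorem \ref{main}; this differs from $\det((x^{i,j}(k_i)))$ by the factor $(-1)^d$, and ``negating row $i$ and column $i$'' does not reconcile the two.
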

\begin{proof} Let $x$ be as in the statement. If for all $i\in [d]$, the sequences $x^{i,j}$, $ j\in [d]$ have at most one positive jump that occurs at the same
point in $\{0,\dots,k_i\}$, then from Definition \ref{simple}, Lemma \ref{crucial0} and Lemma \ref{crucial1}, there is a cyclical permutation $x'$ of $x$ such that
$({\rm r},x')$ is a simple system and the result follows in this case.  In general, let us prove that there is a simple system, with LV-matrix $(x^{i,j}(k_i))$,
and whose number of good cyclical permutations is the same as  this of $x$.

Fix any index $m\in [d]$ and assume without loss of generality, that $m\neq 1$, and
$$
x^{m,1}(k_m)-x^{m,1}(k_m-1)>0.
$$
From $x$, we define a new sequence $\tilde{x}\in S_d$ with
length ${\rm k}$ as follows:
\begin{equation}\label{trans}
\left\{\begin{array}{l}
\tilde{x}^{m,j}(k)=x^{m,j}(k)\,,\;\;j=2,\dots,d,\;k=1,\dots,k_m\,,\\
\tilde{x}^{m,1}(k)=x^{m,1}(k)\,,\;\;\, k=1,\dots,k_m-2,k_m\,,\\
\tilde{x}^{m,1}(k_m-1)=x^{m,1}(k_m-1)+1.
\end{array}\right.
\end{equation}
All the other coordinates remain unchanged, that is,
\[\tilde{x}^{i,j}(k)=x^{i,j}(k)\,,\;\;i, j=1,\dots,d\,,\;\;k=1,\dots,k_i\,,\;\,\;\;i\neq m\,.\]
The sequence $\tilde{x}$ is obtained from $x$ by decreasing by one unit, the last jump of the coordinate $x^{m,1}$, that is
$x^{m,1}(k_m)-x^{m,1}(k_m-1)$. (Therefore the jump $x^{m,1}(k_m-1)-x^{m,1}(k_m-2)$ is increased by one unit.)
Denote by $N_{{\rm r},x}$ the number of good cyclical permutations of $x$.  We claim that
\begin{equation}\label{6248}
N_{{\rm r},\tilde{x}}\ge N_{{\rm r},x}\,.
\end{equation}
To achieve this aim, first observe that ${\rm k}$ is a solution of the system $({\rm r},\tilde{x})$, that is
\[r_j+\sum_{i=1}^d\tilde{x}^{i,j}(k_i)=0\,,\;\;j=1,\dots,d\,,\]
and note the straightforward inequality,
\[
\tilde{x}^{i,j}(k)\ge x^{i,j}(k)\,,\;\;i,j=1,\dots,d\,,\;\;k=1,\dots,k_i\,.
\]
Therefore, if ${\rm k}$ is the smallest solution of $({\rm r},x)$ then it is also the smallest solution of $({\rm r},\tilde{x})$.
Moreover, for ${\rm q}=(q_1,\dots,q_d)\in\mathbb{N}^d$ satisfying $q_m<k_{m}-1$, one easily checks that
the same inequality holds, that is
\[
\tilde{x}_{{\rm q},{\rm k}}^{i,j}(k)\ge x_{{\rm q},{\rm k}}^{i,j}(k)\,,\;\;i,j=1,\dots,d\,,\;\;k=1,\dots,k_i\,,
\]
so that if ${\rm k}$ is the smallest solution of $({\rm r},x_{{\rm q},{\rm k}})$ (i.e.  $x_{{\rm q},{\rm k}}$ is a good cyclical
permutation of $x$) then it is also the smallest solution of $({\rm r},\tilde{x}_{{\rm q},{\rm k}})$.

Now it remains to study the case where $q_m=k_m-1$. Assume that $x_{{\rm q},{\rm k}}$ is a good cyclical permutation of $x$, but that
$\tilde{x}_{{\rm q},{\rm k}}$ is not a good cyclical permutation of $\tilde{x}$. Then in order to obtain the inequality (\ref{6248}), we have to find
a good cyclical permutation  $\tilde{x}_{{\rm l},{\rm k}}$ of $\tilde{x}$ such that $x_{{\rm l},{\rm k}}$ is not a good cyclical permutation of $x$.
Let us define the sequence $\hat{x}\in S_d$ with length ${\rm k}$, which is obtained by decreasing by one unit the first coordinate of
$x^{(m)}_{{\rm q},{\rm k}}$, that is
\[\left\{\begin{array}{l}
\hat{x}^{(i)}\equiv x^{(i)}_{{\rm q},{\rm k}}\,,\ i\neq m\,,\\
\hat{x}^{(m)}(k)= x^{(m)}_{{\rm q},{\rm k}}(k)-{\bf e}_1,\,k\ge1\,,
\end{array}\right.\]
where ${\bf e}_1=(1,0,\cdots,0)$, is the $d$ dimensional unit vector.  Since
${\rm k}$ is the smallest solution of $({\rm r},x_{{\rm q},{\rm k}})$, then ${\rm k}$ is the smallest solution of $({\rm r}+{\bf e}_1,\hat{x})$
by the definition of $\hat{x}$. Moreover, from Lemma 2.2 $(ii)$, the system $({\bf e}_1,\hat{x})$ admits a smallest solution which is less than ${\rm k}$.
Let us call ${\rm p}$ this solution. Then ${\rm p}>0$ and from Lemma 2.2 (ii), ${\rm k}-{\rm p}$ is the smallest solution of
$({\rm r},\hat{x}_{{\rm p},{\rm k}})$.

Then let us consider the cyclical permutation of  $\tilde{x}_{{\rm q},{\rm k}}$ at ${\rm p}$. It is a cyclical permutation of $\tilde{x}$ that we shall
denote by $\tilde{x}_{{\rm l},{\rm k}}$. Note that $\hat{x}$ and $\tilde{x}_{{\rm q},{\rm k}}$ only differ from the last jump of $\hat{x}^{m,1}$
and  $\tilde{x}_{{\rm q},{\rm k}}^{m,1}$, more specifically, $\tilde{x}_{{\rm q},{\rm k}}(k_m)- \tilde{x}_{{\rm q},{\rm k}}(k_m-1)=
\hat{x}(k_m)- \hat{x}(k_m-1)+1$. Then from the above constructions, we can see that $\tilde{x}_{{\rm l},{\rm k}}$ is obtained as follows:
\[\tilde{x}_{{\rm l},{\rm k}}^{(i)}(k)=\left\{\begin{array}{ll}
\hat{x}_{{\rm p},{\rm k}}^{(i)}(k),\,&k\le k_i-p_i,\,i\neq m,\\
\hat{x}_{{\rm p},{\rm k}}^{(m)}(k),\,&k<k_m-p_m,\\
\hat{x}_{{\rm p},{\rm k}}^{(i)}(k_i-p_i)+\hat{x}^{(i)}(k-(k_i-p_i)),\,&k_i-p_i\le k\le k_i,\,i\neq m,\\
\hat{x}_{{\rm p},{\rm k}}^{(m)}(k_m-p_m)+\hat{x}^{(m)}(k-(k_m-p_m))+{\bf e}_1,\,&k_m-p_m\le k\le k_m\,.
\end{array}\right.\]
Since ${\rm k}-{\rm p}$ is the smallest solution of $({\rm r},\hat{x}_{{\rm p},{\rm k}})$ and since $\tilde{x}_{{\rm l},{\rm k}}$ is strictly
greater than $\hat{x}_{{\rm p},{\rm k}}$ at point ${\rm k}-{\rm p}$, there is no solution to $({\rm r},\tilde{x}_{{\rm l},{\rm k}})$, on the
(multidimensional) interval $[0,{\rm k}-{\rm p}]$. Moreover, also from the construction of $({\rm r},\tilde{x}_{{\rm l},{\rm k}})$,
since ${\rm p}$ is the smallest solution of $({\bf e}_1,\hat{x})$,  the only solution of $({\rm r},\tilde{x}_{{\rm l},{\rm k}})$ on the interval
$[{\rm k}-{\rm p},{\rm k}]$ is ${\rm k}$. Therefore, the smallest solution of $({\rm r},\tilde{x}_{{\rm l},{\rm k}})$ is ${\rm k}$.

On the other hand, note the following identity
\[
\Big(x_{{\rm l},{\rm k}}^{i,j}(k),\,i,j\in [d],\,k\in [k_i-p_i]\Big)=\Big(\hat{x}_{{\rm p},{\rm k}}^{i,j}(k),\,i,j\in [d],\,k\in [k_i-p_i]\Big)\,,
\]
which can be seen  directly from the definition of $\hat{x}$. It follows that ${\rm k}-{\rm p}$ is the smallest
solution of the system $({\rm r},x_{{\rm l},{\rm k}})$. But since ${\rm p}>0$, the sequence $x_{{\rm l},{\rm k}}$ is not a good permutation of
$x$, and the inequality (\ref{6248}) is proved.

Let ${\rm q}=(0,\dots,0,k_m-1,0\dots,0)$, where $k_m-1$ is the $m$-th coordinate of ${\rm q}$ and set $y:=\tilde{x}_{{\rm q},{\rm k}}$, then
by applying the same arguments as above to the chain $y$, we obtain that
\[N_{{\rm r},\tilde{y}}\ge N_{{\rm r},y}=N_{{\rm r},\tilde{x}}\ge N_{{\rm r},x}\,,\]
with obvious notations. But by reiterating $k_m$ times this operation, we obtain again the chain $x$. This shows that equality holds in (\ref{6248}),
that is $N_{{\rm r},\tilde{x}}= N_{{\rm r},x}$.

Finally, let $z\in S_d$ be a chain with length ${\rm k}$, such that $z^{i,j}(k_i)=x^{i,j}(k_i)$, for all $i,j\in [d]$, and such that for all $i\in [d]$, the sequences
$z^{i,j}$, $j\in [d]$ have at most one positive jump that occurs at the same point in $\{0,\dots,k_i\}$.  Then it is easy to see that the chain $x$ can be obtained
after several cyclical permutations and iterations of the transformation
$(\ref{trans})$ applied  to $z$, at any coordinate. Therefore  $N_{{\rm r},z}= N_{{\rm r},x}$. Assume that there is a good cyclical permutation $z'$
of $z$. Then $({\rm r},z')$ is a simple system. Therefore, from Lemma \ref{crucial0} and Lemma \ref{crucial1},
$N_{{\rm r},z'}=N_{{\rm r},z}= N_{{\rm r},x}=\mbox{det}(x^{i,j}(k_i))$.
\qed
\end{proof}

\begin{lemma}\label{4310}
Let $x\in S_d^{\rm r}$ and let $\bar{x}$ be the sequence of  $\bar{S}_d^{\rm r}$ which is associated to $x$, as in
$(\ref{5169})$. Then $x$ and $\bar{x}$ have the same number of good cyclical permutations.
\end{lemma}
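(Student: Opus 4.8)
The plan is to set up an explicit bijection between the good ${\rm n}$-cyclical permutations of $x$ and the good ${\rm k}$-cyclical permutations of $\bar x$. Recall first from Lemma~\ref{3379} that $\bar x\in\bar S_d^{\rm r}$, that its length ${\rm k}=(k_1,\dots,k_d)\in\mathbb N^d$ is the smallest solution of $({\rm r},\bar x)$, that $k_i=-\min_{0\le n\le n_i}x^{i,i}_n$, and that $n_i=\tau^{(i)}_{k_i}$ for every $i\in[d]$; note also $k_i\ge1$, since otherwise $n_i=\tau^{(i)}_0=0\notin\mathbb N$. The candidate bijection is ${\rm p}=(p_1,\dots,p_d)\mapsto{\rm q}=(\tau^{(1)}_{p_1},\dots,\tau^{(d)}_{p_d})$. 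Since each $\tau^{(i)}_{\cdot}$ is strictly increasing on $\{0,1,\dots,k_i\}$ with $\tau^{(i)}_0=0$ and $\tau^{(i)}_{k_i}=n_i$, this is an injection from $\{{\rm p}:{\rm p}\le{\rm k}-1_d\}$ into $\{{\rm q}:{\rm q}\le{\rm n}-1_d\}$; the whole point is to prove that it restricts to a bijection between the good permutations.

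The first step is to show that every good cyclical permutation $x_{{\rm q},{\rm n}}$ of $x$ is of this form, i.e.\ that for each $i$ one has $q_i\in\{\tau^{(i)}_0,\dots,\tau^{(i)}_{k_i-1}\}$. Indeed, if ${\rm n}$ is the smallest solution of $({\rm r},x_{{\rm q},{\rm n}})$, then by Lemma~\ref{preliminary}$(i)$ the chain $(x_{{\rm q},{\rm n}})^{i,i}=(x^{i,i})_{q_i,n_i}$ attains its minimum over $\{0,\dots,n_i\}$ for the first time at $n_i$. Now $x^{i,i}$ is downward skip free, with $x^{i,i}_0=0$ and $x^{i,i}_{n_i}=-k_i$, and it first attains its minimum $-k_i$ at $n_i$; the classical single-type cyclic lemma computation shows that its $q_i$-cyclical permutation has this same property precisely when $q_i$ is one of the passage times $\tau^{(i)}_0=0,\tau^{(i)}_1,\dots,\tau^{(i)}_{k_i-1}$. (If instead $\tau^{(i)}_{m-1}<q_i<\tau^{(i)}_m$ and $v:=x^{i,i}_{q_i}$, one checks directly from the definition~(\ref{def3}) that $(x^{i,i})_{q_i,n_i}$ drops strictly below $-k_i$ at time $n_i-q_i$ if $v>0$, equals $-k_i$ already at $n_i-q_i$ if $v=0$, and equals $-k_i$ at time $n_i-q_i+\tau^{(i)}_{|v|}<n_i$ if $v<0$, using $\tau^{(i)}_{|v|}\le\tau^{(i)}_{m-1}<q_i$; in all three cases $n_i$ fails to be the first time the minimum is reached.)

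In the second step, fix ${\rm q}=(\tau^{(i)}_{p_i})_{i\in[d]}$ with $0\le p_i\le k_i-1$ and put ${\rm p}=(p_1,\dots,p_d)$. I would first verify that $\overline{x_{{\rm q},{\rm n}}}=\bar x_{{\rm p},{\rm k}}$: because $q_i$ is a passage time, $(x^{i,i})_{q_i,n_i}$ is obtained by cyclically rotating the successive first-passage-to-$(-1)$ excursions of $x^{i,i}$, so its $m$-th passage time to $-m$ equals $\ell^{(i)}_{p_i+1}+\dots+\ell^{(i)}_{p_i+m}$ (indices read mod $k_i$), with $\ell^{(i)}_l:=\tau^{(i)}_l-\tau^{(i)}_{l-1}$; evaluating the off-diagonal coordinates of $x_{{\rm q},{\rm n}}$ at these times produces the $p_i$-rotated partial sums of the increments $\delta^{(i,j)}_l:=\bar x^{i,j}_l-\bar x^{i,j}_{l-1}$, which by~(\ref{def3}) is exactly $\bar x^{i,j}_{p_i,k_i}(m)$. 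Moreover the same single-type computation as above gives $-\min_{0\le n\le n_i}(x^{i,i})_{q_i,n_i}=k_i$ and that this minimum is first reached at $n_i$, so that $n_i$ is the first passage time of $(x_{{\rm q},{\rm n}})^{i,i}$ to $-k_i$. Now apply Lemma~\ref{3379} to the sequence $x_{{\rm q},{\rm n}}\in S_d$ — which has length ${\rm n}\in\mathbb N^d$, for which ${\rm n}$ is a solution of $({\rm r},x_{{\rm q},{\rm n}})$, and whose associated sequence in the sense of~(\ref{5169}) is $\overline{x_{{\rm q},{\rm n}}}=\bar x_{{\rm p},{\rm k}}$: the direct part gives that $x_{{\rm q},{\rm n}}$ good $\Rightarrow\bar x_{{\rm p},{\rm k}}$ good, and the converse part (whose hypothesis $n_i=\tau^{(i)}_{k_i}$ has just been checked for $x_{{\rm q},{\rm n}}$) gives the reverse implication.

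Combining the two steps, ${\rm q}\le{\rm n}-1_d$ indexes a good cyclical permutation of $x$ if and only if ${\rm q}=(\tau^{(i)}_{p_i})_i$ for some ${\rm p}\le{\rm k}-1_d$ indexing a good cyclical permutation of $\bar x$; since ${\rm p}\mapsto{\rm q}$ is injective, the two numbers of good cyclical permutations coincide, which is the assertion. (The argument can alternatively be phrased through the bijections of Theorem~\ref{coding} and Lemma~\ref{numbersubtrees}: a good cyclical permutation of $x$ amounts to a type-by-type cyclic rearrangement of the marked subforests of the forest coded by $x$ that still lies in $\mathcal F_d^{\rm r}$, and this matches the corresponding rearrangement of the vertices of the reduced forest coded by $\bar x$.) I expect the main obstacle to be the first step together with the identity $\overline{x_{{\rm q},{\rm n}}}=\bar x_{{\rm p},{\rm k}}$: one has to run carefully the bookkeeping of single-type cyclical shifts — in particular distinguishing the sign of $x^{i,i}_{q_i}$ when $q_i$ sits strictly inside an excursion — in order to pin down exactly the admissible shift points and to see that, at an admissible one, the ``bar'' operation commutes with cyclical permutation; once this is in hand, Lemma~\ref{3379} supplies the rest.
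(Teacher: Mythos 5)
Your proof is correct and follows essentially the same route as the paper's: the correspondence $p_i\mapsto\tau^{(i)}_{p_i}$ between shift indices, the commutation identity $\overline{x_{{\rm q},{\rm n}}}=\bar x_{{\rm p},{\rm k}}$, and the transfer of goodness via Lemma \ref{3379} combined with Lemma \ref{preliminary}$(i)$ are exactly the ingredients of the paper's argument. The only difference is that you spell out the single-type passage-time bookkeeping (the case analysis on the sign of $x^{i,i}_{q_i}$ and the verification that $n_i$ is the first passage of the shifted diagonal to $-k_i$) which the paper leaves as ``we can check''.
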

\begin{proof}
Let ${\rm k}$ and ${\rm n}$ be the respective lengths of $\bar{x}$ and $x$. In particular, we have $n_i=\tau^{(i)}_{k_i}$,
$i\in [d]$. Let ${\rm q}\le {\rm k}-1_d$ be such that $\bar{x}_{{\rm q},{\rm k}}$ is a good cyclical permutation of $\bar{x}$. Then clearly,
there is ${\rm p}=(p_1,\dots,p_d)\le {\rm n}-1_d$, such that $p_i=\tau^{(i)}_{q_i}$. Set $y=x_{{\rm p},{\rm n}}$ and let us check that
\begin{equation}\label{5052}
\bar{y}=\bar{x}_{{\rm q},{\rm k}}\,.
\end{equation}
Define $\theta^{(i)}(m)=\min\{v:x_{p_i,n_i}^{i,i}(v)=-m\}=\min\{v:y^{i,i}(v)=-m\}$ and let $\tau_{q_i,k_i}^{(i)}$ be the
cyclical permutation of the sequence $\tau^{(i)}$, as defined in (\ref{def3}). Then from the construction of $y$, we can check that
\[\theta^{(i)}(m)=\tau_{q_i,k_i}^{(i)}(m)\,,\;\;m\le k_i\,,\]
from which we derive (\ref{5052}). Moreover, since $\bar{y}$ is a good cyclical permutation of $\bar{x}$, we deduce from Lemma
\ref{3379} that $y$ is a good cyclical permutation of $x$.

Conversely, let ${\rm q}<{\rm n}-1_d$ such that $x_{{\rm q},{\rm n}}$ is a good cyclical permutation of $x$. Then from part $(i)$
of Lemma \ref{preliminary}, we must have $n_i=\min\{n:x_{q_i,n_i}^{i,i}(n)=-k_i\}$. Therefore, there exists ${\rm p}<{\rm k}$,
such that $q_i=\tau^{(i)}_{p_i}$. Again, by setting $y=x_{{\rm q},{\rm n}}$, we check that
$\bar{y}=\bar{x}_{{\rm p},{\rm k}}$ and we deduce from Lemma \ref{3379} that $\bar{y}$ is a good cyclical permutation of $\bar{x}$.\qed
\end{proof}

\noindent Then we end the proof of Lemma \ref{crucial}. Let $x\in S_d$ be as in this lemma, that is $x$ has finite length
${\rm n}=(n_1,\dots,n_d)\in\mathbb{N}^d$ and ${\rm n}$ is a solution of the
system $({\rm r},x)$, where ${\rm r}=(r_1,\dots,r_d)\in\mathbb{Z}^d_+$ is such that ${\rm r}>0$. Let
$k_i'=-\min_{0\le n\le n_i}x^{i,i}_n$ and set $n'_i=\tau^{(i)}_{k'_i}$. Then ${\rm n}'\le {\rm n}$, so that we can define
the cyclical permutation $y=x_{{\rm n}',{\rm n}}$ of $x$. By construction, the sequence $\bar{y}$ has length ${\rm k}$,
where $k_i=-x^{i,i}(n_i)$ and $\bar{y}^{i,j}(k_i)=x^{i,j}(n_i)$, $i,j\in [d]$. Note that $k_i\ge1$, from the assumption $x^{i,i}(n_i)\neq0$.
Moreover, ${\rm k}$ is a solution of the system $({\rm r},\bar{y})$. So, thanks to Lemma \ref{4311}, the number of good cyclical permutations
of $\bar{y}$ is $\mbox{\rm det}(x^{i,j}(n_i))$ and from Lemmas \ref{3379} and \ref{4310} this is also the  number of good cyclical permutations
of $y$, the latter being clearly the number of good cyclical permutations of~$x$.~\qed

\vspace*{.5in}

\begin{singlespace} \small

\end{singlespace}


\begin{thebibliography}{99}
\bibitem{an} \sc K.B.~Athreya, P.~E.~Ney: \it Branching processes. \rm Die Grundlehren der mathematischen Wissenschaften,
Band 196. Springer-Verlag, New York-Heidelberg, 1972.
\bibitem{bm} \sc O.~Bernardi and A.H.~Morales: \rm Counting trees using symmetries.
{\it J. Combin. Theory} Ser. A, Vol 123, No 1, 104-122, (2014).
\bibitem{be} \sc J.~Bertoin: \rm The structure of the allelic partition of the total population for Galton-Watson processes with neutral
mutations. {\em Ann. Probab.} {\bf 37}(4), 1502--1523, (2009).
\bibitem{le} \sc J.F.~Le Gall: \rm Random trees and applications. {\it Probab. Surv.}, {\bf 2}, 245--311, (2005).
\bibitem{dw} \sc M.~Dwass: \rm The total population in a branching process and a related random walk. {\em J. Appl. Probab.}
{\bf 6}, 682--686, (1969).
\bibitem{go}\sc I.J.~Good: \rm Generalizations to several variables of Lagrange's expansion, with applications to stochastic processes.
{\it Proc. Cambridge Philos. Soc.} 56,  367--380, (1960).
\bibitem{ha1} \sc T.E.~Harris: \it The theory of branching processes. \rm Dover Phoenix Editions. Dover Publications, Inc., Mineola,
NY, 2002.
\bibitem{ha2} \sc T.E.~Harris: \rm First passage and recurrence distributions. {\it Trans. Amer. Math. Soc.}, {\bf 73}, 471--486, (1952).
\bibitem{ik} \sc I. Kortchemski: \rm Invariance principles for Galton-Watson trees
conditioned on the number of leaves, \emph {Stoch. Proc. Appl.},  \textbf{122}, 3126--3172, (2012).
\bibitem{mi} \sc G.~Miermont: \rm Invariance principles for spatial multitype Galton-Watson trees.
{\it Ann. Inst. Henri Poincar\'e Probab. Stat.}, {\bf 44}, no. 6, 1128--1161, (2008).
\bibitem{min} \sc N.~Minami: \rm On the number of vertices with a given degree in a Galton-Watson tree. {\it Adv.
Appl. Probab} {\bf 37}, 229--264 (2005).
\bibitem{mo} \sc C.J.~Mode: \it Multitype branching processes. Theory and applications. \rm
Modern Analytic and Computational Methods in Science and Mathematics, No. 34 American Elsevier Publishing Co., Inc., New York 1971.
\bibitem{moo} \sc J.W.~Moon: \rm Some determinant expansions and the matrix-tree theorem.
{\it Discrete Math.} 124, no. 1-3, 163--171, (1994).
\bibitem{ot} \sc R.~Otter: \rm  The multiplicative process. {\it Ann. Math. Statistics} 20, 206--224, (1949).
\bibitem{pi} \sc J.~Pitman: \it Combinatorial Stochastic Processes. \rm Saint-Flour, Springer, 2002.
\bibitem{ta} \sc L.~Tak\'acs: \rm A generalization of the ballot problem and its application in the theory of queues. {\it J. Amer. Statist. Assoc.} 57, 327--337, (1962).
\bibitem{tu} \sc W.T.~Tutte: \rm The dissection of equilateral triangles into equilateral triangles. {\it Proc. Cambridge Philos. Soc.} 44, 463--482, (1948).
\end{thebibliography}
\end{document}